\documentclass[12pt]{amsart}
\usepackage{preamble_general}
\usepackage[
    colorlinks,
    linkcolor=blue,
    citecolor=blue,
    urlcolor=blue,
    linktocpage=true
]{hyperref}
\usepackage[nameinlink]{cleveref}
\usepackage{orcidlink}
\usepackage{bm}
\usepackage{preamble_abbreviations}

\title[height pairing for differential forms]{The Archimedean height pairing for differential forms on degeneration of Riemann surfaces}
\date{\today}

\author{Junyu Cao \orcidlink{0009-0003-6875-3981}}

\begin{document}
\begin{abstract} 
    We define the Archimedean height pairing for fiberwise 
    cohomologically trivial differential forms on a one-parameter 
    degeneration of Riemann surfaces, and we study its asymptotic 
    behavior. The proof relies on recent work by Dai--Yoshikawa on the 
    asymptotics of small eigenvalues. As an application, we relate 
    this pairing to the current-valued pairing of Filip--Tosatti, 
    extending their construction to broader geometric settings.
\end{abstract}
\maketitle
\setcounter{tocdepth}{1}
\tableofcontents
\section*{Introduction}
Let \(\pi\colon X \to S \simeq \D\) be a proper surjective holomorphic map from a
complex surface \(X\)
to a Riemann surface isomorphic to the unit disk. We assume that \(\pi\) has connected fibers and \(X_0 \coloneqq \pi^{-1}(0)\) is
the unique singular fiber. Assume that $X$ is equipped with a 
\Kahler metric.

For two smooth real $(1,1)$-forms $\alpha, \beta$ on $X$ that are 
cohomologically trivial on smooth fibers $X_s \coloneqq \pi^{-1}(s)
(s \neq 0)$, i.e., $\int_{X_s} \alpha = \int_{X_s}\beta = 0$, we define \emph{the Archimedean height pairing} between
them.
\begin{definition}[Archimedean height pairing, \Cref{dfn:pairing_of_forms}]
    For any $s \neq 0$, let $-\vphi_s \in C^\infty(X_s)$
    be a potential for $\alpha$ on $X_s$, i.e.,
    $\ddc \vphi_s + \alpha\mid_{X_s} = 0$. Then 
    \emph{the Archimedean height pairing} $\la \alpha, \beta \ra$ between $\alpha, \beta$
    is a function on smooth locus $S^\circ \coloneqq S \setminus \{0\}$:
    \[
    \la \alpha, \beta \ra(s) \coloneqq \int_{X_s} \vphi_s \beta,
    \quad s \neq 0.
    \]
    Since $\pi$ is a submersion on $S^\circ$, $\la \alpha, \beta \ra \in C^\infty(S^\circ)$.
\end{definition}

We study its asymptotics around the singular fiber of the degeneration.
\begin{theorem}[\Cref{thm:continuity_of_pairing_full}, \Cref{cor:continuity_of_pairing_general_fiber}]
    \label{thm:main_thm}
    There is a constant $c_{\alpha, 
    \beta} \in \R$ such that 
    \(
    \la \alpha, \beta \ra - c_{\alpha, \beta} \log\abs{s}^2
    \)
    extends continuously to $S$. In particular,
    $\la \alpha, \beta \ra \in L^\infty(S^\circ)$
    if and only if $\la \alpha, \beta \ra \in C^0(S)$.

    When $X_0$ is a reduced divisor and $C_1, \cdots, C_N$ are its irreducible components, the constant $c_{\alpha, \beta}$ is determined in the following
    way: Let $M$ be the intersection matrix of irreducible components
    of $X_0$, given by $(M_{ij})_{1 \leq 
    i, j \leq N} = (C_i \cdot C_j)_{1 \leq i, j \leq N}$, and
    let  
    $M^+$ be the 
    Moore-Penrose pseudoinverse of $M$. Then $c_{\alpha, \beta} = 
    \mathbf{v}_\alpha^T (M^+)^T  \mathbf{v}_\beta$, where 
    \[
    \mathbf{v}_\alpha = \left( \int_{C_1} \alpha, \dots, \int_{C_N} \alpha \right)^T, \quad 
    \mathbf{v}_\beta = \left( \int_{C_1} \beta, \dots, \int_{C_N} \beta \right)^T.
    \]
\end{theorem}

\begin{remark}
    The name \emph{Archimedean height pairing} comes from 
    Arakelov theory, e.g. in Bost's work \cite{Bost1990}. But their 
    pairings are defined for algebraic cycles/divisors. 
    
    Precisely, let $D, E$ be two divisors 
    on the one-parameter degeneration $\pi \colon X \to S$
    of algebraic curves. Suppose $D, E$ are of 
    relative degree $0$ and their supports do not intersect. For a divisor $D$, we 
    define its Green's current $g_{D_s}$ on general fibers $X_s$ following \cite{DH2015}*{Equation (2.1)}. The $g_{D_s}$ is a distribution on 
    $X_s$ solving the equation
    \[
    \pa \dbar g_{D_s} + \pi i \delta_{D_s} = 0,
    \]
    where $D_s$ is the restriction of $D$ on $X_s$.
    We remark that $g_{D_s}$ is smooth on $X_s$ away
    from the support of $D_s$.
    Then the Archimedean height pairing between 
    $D$ and $E$ is a well-defined function on $S^\circ$:
    \[
    s \mapsto \la D_s, E_s \ra_\infty \coloneqq 
    g_{D_s}([E_s]) = \sum_{i} a_i g_{D_s}(q_i),
    \]
    where $E_s = \sum_i a_i q_i$.

In Holmes--de Jong's paper \cite{DH2015}*{Theorem 2.1}, 
    they derive a similar asymptotic result for the Archimedean height pairing $\la D_s, E_s \ra_\infty$ on degenerations of algebraic 
    curves.
\end{remark}

\begin{remark}
    Similar asymptotic results were obtained by Yoshikawa \cite{Yoshikawa2006} and Eriksson--Freixas--Mourougane \cite{EFG2018} in their study of Quillen metrics around singular fibers. Analogous behavior for the canonical heights of elliptic curves around singular fibers was established by
    Silverman \cites{Silverman1,Silverman2,Silverman3} 
    and recently by 
    DeMarco--Mavraki \cite{DeMarcoMavraki2020}.
\end{remark}

The study of the Archimedean height pairing for forms is motivated 
by the following dynamical scenario. Let \(X\) be a projective 
elliptic K3 
surface with a parabolic automorphism \(T\) preserving the 
fibration:
\[
\begin{tikzcd}
    X \arrow[r, "T"]   \arrow[d, "\pi"']  & X
    \arrow[ld, "\pi"] \\
    B \coloneqq\mathbb{P}^1   &             
\end{tikzcd}
\]

In Filip--Tosatti's paper \cite{Filip-Tosatti21}*{Theorem 3.2.14},
they introduced the ``\emph{current-valued pairing}" $\eta_B(T, [\alpha])$, which is a current on $B$, for $\alpha$ a smooth closed real $(1, 1)$-form on $X$ that is cohomologically trivial on smooth fibers.

The \emph{current-valued pairing} can be used to characterize the
limit properties of large iterations of the automorphism $T$. 
For example, they proved in \cite{Filip-Tosatti21}*{Corollary 3.2.20} that 
\begin{equation}
    \label{eqn:intro_limit_current}
    \lim_{n \to \infty} \frac{T^n_* \omega}{n^2} = \frac{1}{2}
    \pi^*\eta_B(T, [T_*\omega - \omega]),
\end{equation}
where $\omega$ is a \Kahler form on $X$.

When the singular fibers of $\pi$ are reduced and irreducible,
Filip--Tosatti established the continuity of potentials
of $\eta_B$ in \cite{Filip-Tosatti21}*{Proposition 3.2.11}.
In this paper, we remove their assumption on the singular fibers and 
prove the continuity in general.
\begin{proposition}[\Cref{prop:continuity_of_current_val_pairing}]
    \label[proposition]{prop:intro_continuity_cur_pairing}
    Let $\mathrm{Aut}_\pi(X)$ be the group of automorphisms
    of $X$ preserving the fibration. 
    There exists a finite index subgroup $G$ of $\mathrm{Aut}_\pi(X)$ such that the \emph{current-valued pairing} $\eta_B$
    restricted on $G$ has a continuous potential.
\end{proposition}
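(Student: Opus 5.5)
The approach is to write the potential of $\eta_B(T,[\alpha])$ near each critical value of $\pi$ as an Archimedean height pairing, and then use \Cref{thm:main_thm} to show that the logarithmic coefficient vanishes once $T$ is restricted to a suitable finite index subgroup. We first recall the Filip--Tosatti construction. For $T \in \mathrm{Aut}_\pi(X)$ and a smooth closed real $(1,1)$-form $\alpha$ that is fiberwise cohomologically trivial, the class $[T^*\alpha - \alpha]$ is pulled back from $B$. Away from the singular values, $\eta_B$ has a local potential of the form $s \mapsto \la \alpha, \beta\ra(s)$ up to smooth terms, where $\beta = T^*\alpha' - \alpha'$ or a similar fiberwise-trivial form built from $T$ and $\alpha$. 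This is the content of the formula preceding \cite{Filip-Tosatti21}*{Proposition 3.2.11}. Continuity is local on $B$ and automatic at regular values. It therefore suffices to fix one singular fiber $X_0$, take a disk $S \ni 0$, and show that the local potential, which equals a finite combination of pairings $\la \alpha_k,\beta_k\ra$ plus a smooth function, is continuous at $0$.

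By \Cref{thm:main_thm}, each $\la \alpha_k,\beta_k\ra - c_{\alpha_k,\beta_k}\log\abs{s}^2$ extends continuously to $S$. The task is therefore to show that the total logarithmic coefficient $\sum_k c_{\alpha_k,\beta_k}$ vanishes. Write $X_0=\sum m_i C_i$. The constant depends only on the vectors $\mathbf v_\alpha=(\int_{C_i}\alpha)_i$, through a bilinear form given by a pseudoinverse of the intersection matrix; for non-reduced fibers, the general form in \Cref{thm:continuity_of_pairing_full} gives the analogous bilinear expression. The group $\mathrm{Aut}_\pi(X)$ acts on the finitely many singular fibers and on their irreducible components. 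We let $G$ be the finite index subgroup that fixes every singular fiber and every irreducible component $C_i$ of every singular fiber. This subgroup has finite index because it is the kernel of an action on a finite set. For $T\in G$ we have $T_*[C_i]=[C_i]$, so $\int_{C_i} (T^*\alpha-\alpha) = \int_{T_*C_i}\alpha - \int_{C_i}\alpha = 0$. Hence $\mathbf v_{T^*\alpha-\alpha}=0$, and every pairing involving the form $T^*\alpha-\alpha$ has $c=0$.

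The main obstacle is the first step: matching the Filip--Tosatti potential precisely with pairings in which one of the entries is of the form $T^*\gamma-\gamma$ (or a sum of such terms). The constant must vanish because of the factor $\mathbf v_{T^*\gamma-\gamma}=0$, not by accident. I would first try to express the potential of $\eta_B(T,[\alpha])$ as $\la \alpha, T^*\alpha-\alpha\ra$ modulo smooth terms. To do so, I would apply fiberwise $\ddc$-lemma potentials and the invariance $\la T^*\alpha,T^*\beta\ra=\la\alpha,\beta\ra$, which holds since $T$ preserves fibers. If the construction instead involves $\la \alpha,\alpha\ra$-type terms, I would check bilinearity of $c$ together with $\mathbf v_{T^*\alpha}=\mathbf v_\alpha$ for $T\in G$; this makes the $\log$ coefficients of $\la T^*\alpha,T^*\alpha\ra$ and $\la\alpha,\alpha\ra$ cancel. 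With vanishing coefficient, the local potential is continuous at each singular value. Patching the local potentials with a partition of unity on $B$ then gives a global continuous potential.
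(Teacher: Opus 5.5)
The gap is your choice of $G$, which is where the statement actually needs a finite index subgroup. You assert that for $T\in\mathrm{Aut}_\pi(X)$ the class $[T^*\alpha-\alpha]$ is pulled back from $B$. This is false in general. A form $\beta$ on $B$ and a global $f\in C^\infty(X)$ with $T_*\alpha-\alpha=\pi^*\beta+\ddc f$ exist exactly when $T_*[\alpha]-[\alpha]\in\mathbb{R}[E]$; otherwise $\eta_B(T,[\alpha])$ is not even defined. For all fiberwise trivial $[\alpha]$ this holds if and only if $T$ acts trivially on $[E]^\perp_{\mathrm{H}^{1,1}}/[E]_{\mathbb{R}}$.

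Fixing every component of every singular fiber only controls the span of the fiber components inside $[E]^\perp/[E]$. It says nothing about the Mordell--Weil or transcendental parts. For example, take a Jacobian elliptic K3 whose singular fibers are all irreducible and whose Mordell--Weil rank is positive. Its fiberwise inversion $x\mapsto -x$ lies in your $G$, but it acts by $-1$ on the Mordell--Weil part.

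The paper takes $G=\mathrm{Aut}^{\circ,(1,1)}_\pi(X)$ from \Cref{lem:the_finite_index_subgroup}. This also rescues your argument, because that subgroup automatically fixes every component. If $T(C_i)=C_j$, then $[C_j]-[C_i]\in\mathbb{R}[E]$. Intersecting with $C_i$ gives $C_j\cdot C_i=C_i^2<0$ for a reducible fiber, which forces $j=i$.

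Your first step is also left open and is misidentified. From $\pi^*u=f+T_*\varphi-\varphi$ and $\int_{X_s}\omega_s=1$ one gets $u(s)=\int_{X_s}f\,\omega_s+\langle\alpha,(T^*-I)\omega\rangle(s)$. So the entry of the form $T^*\gamma-\gamma$ has $\gamma=\omega$, the normalized K\"ahler form, not $\alpha$. The remaining term is not smooth at a critical value; it is only H\"older continuous, by \Cref{thm:fib_integral_barlet}. Also, $u$ is already defined on all of $B^\circ$, so no partition-of-unity patching is needed; such patching would not commute with $\ddc$ anyway.

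With these repairs your route is a valid alternative to the paper's. You kill the logarithmic term directly: $\int_{C_i}(T^*\omega-\omega)=0$ gives $c_{\alpha,(T^*-I)\omega}=0$ by \Cref{thm:continuity_of_pairing_full}. For non-reduced fibers that theorem does not apply as stated. You must pass to the semistable model of \Cref{cor:continuity_of_pairing_general_fiber}, use $c_{\alpha,\beta}=c_{\mu^*\alpha,\mu^*\beta}/d$, and use the projection formula to see that $\mu^*(T^*\omega-\omega)$ integrates to zero on every component of $Y_0$.

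The paper instead bounds $\abs{u}\le\norm{f}_{L^\infty(X)}$ via Tate's Birkhoff-sum argument, then uses the dichotomy that a bounded pairing is continuous. That needs only the existence of the logarithmic coefficient, not its formula, so non-reduced fibers cost nothing extra. Your argument avoids the dynamics and is purely local. It shows that $\langle\alpha,T^*\gamma-\gamma\rangle$ is continuous for any fiber-preserving $T$ fixing the components of $X_0$ and any smooth real $(1,1)$-form $\gamma$.
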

This proposition is proved by the following observation, 
which relates \emph{the Archimedean height pairing} and 
the potential of \emph{current-valued pairing}. 

We use 
$u$ to denote a potential of the \emph{current-valued pairing} $\eta_B(T, [\alpha])$,
then we have (in \Cref{eqn:relation_height_pairing_and_current_pairing})
\[
    u(s)  =
    \int_{X_s} f \omega + \la \alpha, (T^* - I)\omega\ra,
\]
where $f \in C^\infty(X)$ and $\omega$ is a normalized 
\Kahler form on $X$. The first term $s \mapsto \int_{X_s} f \omega$ is well-studied by Barlet (\Cref{thm:fib_integral_barlet}) and is Hölder continuous. Therefore, \Cref{prop:intro_continuity_cur_pairing} follows from 
the continuity of \emph{the Archimedean height pairing}, which arises as the second term of the potential $u$.

Using \Cref{eqn:intro_limit_current} (\cite{Filip-Tosatti21}*{Corollary 3.2.20}), we have the following
result on parabolic dynamics of elliptic K3 surfaces.
\begin{corollary}[\Cref{cor:dynamical_consequences}]
    Let $G$ be the finite index subgroup of $\mathrm{Aut}_\pi(X)$ defined in \Cref{prop:intro_continuity_cur_pairing}. Then, for any \Kahler form $\omega$ on $X$ and any $T \in G$, the limit current 
    $\eta \coloneqq \lim_{n \to \infty} \frac{T^n_* \omega}{n^2}$ has 
    a continuous potential.

    Furthermore, we prove that the convergence of currents $\frac{(T^n)_* \omega}{n^2} \to \frac{1}{2}\pi^*\eta_B(T, [T_*\omega - \omega])$ cannot be in $C^0_{\rm loc}(X\setminus X_0)$.
    Therefore, the sequence $\left\{\frac{T^n_* \omega}{n^2}\right\}_{n \geq 1}$ provides a counterexample to Tosatti's question \cite{tosatti2025}*{Question 1.5} when we let $\omega$ be a Ricci-flat metric.
\end{corollary}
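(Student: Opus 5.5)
The first assertion follows directly from \Cref{prop:intro_continuity_cur_pairing} combined with \Cref{eqn:intro_limit_current}; the second comes from an explicit local model of $T^n$ on smooth fibers, combined with torsion (periodic) fibers.

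\textbf{Continuity of the potential.} Fix $T \in G$ and a \Kahler form $\omega$. Since $T$ preserves the fibration and each $T|_{X_s}$ is a biholomorphism between smooth fibers, $\int_{X_s} T_*\omega = \int_{X_s}\omega$. Hence $\alpha \coloneqq T_*\omega - \omega$ is a smooth closed real $(1,1)$-form that is cohomologically trivial on smooth fibers, so $\eta_B(T,[\alpha])$ is defined. (If elements of $\mathrm{Aut}_\pi(X)$ can move the base, I would first pass to the finite index subgroup acting trivially on $B$, and take $G$ inside it.) By \Cref{prop:intro_continuity_cur_pairing}, $\eta_B(T,[\alpha]) = \ddc u$ for a continuous $u$ on $B$, locally on $B$ (globally modulo a smooth form representing its class). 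By \Cref{eqn:intro_limit_current},
\[
\eta = \tfrac12 \pi^*\eta_B(T,[\alpha]) = \ddc\big(\tfrac12\, u\circ\pi\big)
\]
locally, modulo a smooth closed form. Since $u\circ\pi$ is continuous, $\eta$ has a continuous potential.

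\textbf{Failure of $C^0_{\rm loc}$ convergence.} I would work over a small disk $U \subset B$ of regular values. There $X_U \cong (\C \times U)/\Lambda_s$ with a holomorphic flat fiber coordinate $z$. Since $T\in G$ is parabolic and fiber-preserving, I expect it to act as a translation
\[
T(z,s) = (z+\sigma(s), s)
\]
for a holomorphic section $\sigma$ (after replacing $T$ by a power if needed, which does not affect the claim). Then $T^n(z,s) = (z+n\sigma(s),s)$. Write
\[
\omega = i\big(a\,dz\wedge d\bar z + b\,dz\wedge d\bar s + \bar b\, ds\wedge d\bar z + c\, ds\wedge d\bar s\big).
\]
Pushing forward replaces $dz$ by $dz - n\sigma'(s)\,ds$ and evaluates the coefficients at $(z-n\sigma(s),s)$. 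The $ds\wedge d\bar s$ coefficient of $T^n_*\omega/n^2$ is therefore
\[
|\sigma'(s)|^2\, a(z-n\sigma(s),s) + O(1/n).
\]
The limit $\tfrac12\pi^*\eta_B$ has $ds\wedge d\bar s$ coefficient independent of $z$; it must be $|\sigma'(s)|^2$ times the fiberwise average of $a(\cdot,s)$, by weak convergence and equidistribution of irrational translations.

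Now suppose the convergence were in $C^0_{\rm loc}(X\setminus X_0)$. Take $s\in U$ with $\sigma(s)$ a torsion point of $\C/\Lambda_s$ of order $m$. Such $s$ are dense, because $\sigma$ is a non-constant real-analytic map into the torus (this non-constancy is where parabolicity, i.e.\ $\sigma$ non-torsion, is used). Along $n = km$ we have $T^n|_{X_s} = \mathrm{id}$, so uniform convergence forces
\[
|\sigma'(s)|^2\, a(z,s) = |\sigma'(s)|^2 \,\mathrm{avg}_{X_s} a
\]
for all $z$. By density and continuity, $a$ is then constant on every fiber wherever $\sigma'\neq 0$. It remains to exhibit $\omega$ whose fiberwise density $a$ is non-constant on some fiber. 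For a general \Kahler form this is arranged by adding $\ddc$ of a small function varying along a fiber. For the Ricci-flat metric, I would argue that fiberwise constancy of $a$ on an open set, together with $\omega^2 = $ const$\cdot\Omega\wedge\bar\Omega$, would make $\omega$ semi-flat over $U$. This contradicts the known fact (Gross--Wilson, Tosatti) that Ricci-flat metrics on K3 are not semi-flat, and it gives the counterexample to \cite{tosatti2025}*{Question 1.5}.

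\textbf{Main obstacle.} I expect the main obstacle to be this last step: proving that the Ricci-flat $\omega$ has non-constant fiber density $a$ on some fiber, i.e.\ ruling out semi-flatness over an open subset of the base. I would first try the real-analyticity of Ricci-flat metrics, which propagates semi-flatness from $U$ to all of $X\setminus\{\text{singular fibers}\}$. I would then get the contradiction from the behaviour near a singular fiber, where semi-flat metrics are incomplete or singular.
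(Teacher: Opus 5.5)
Your proof of the first assertion is the paper's: \Cref{prop:continuity_of_current_val_pairing} combined with \cite{Filip-Tosatti21}*{Corollary 3.2.20}. The genuine gap is in the second part, at the step you flag yourself.

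\textbf{The endgame.} The statement asserts that $C^0_{\rm loc}$ convergence fails for \emph{every} \Kahler form $\omega$, and the counterexample to Tosatti's question needs it for the Ricci-flat one. Perturbing $\omega$ by $\ddc$ of a function only produces \emph{some} $\omega$ for which convergence fails. Your Ricci-flat case rests on an unproved chain (fiberwise flat plus Ricci-flat gives semi-flat, and semi-flat is impossible).

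The missing idea is that neither a case distinction nor real-analyticity is needed. Your local argument runs over every small disk $U\subset B^\circ$, so $C^0_{\rm loc}$ convergence would already force $\omega|_{X_s}$ to be flat on every smooth fiber (using continuity in $s$). No smooth \Kahler form on $X$ can do this. An elliptic K3 has singular fibers, since $\chi(X)=24$, and hence critical points of $\pi$. Near these points the Gauss curvature of $\omega|_{X_s}$ tends to $-\infty$ by \cite{Dai-Yoshikawa2025}*{Lemma 10.1}; in the model $z_1z_2=t$ with the Euclidean metric it equals $-1/\abs{t}$ on the neck $\abs{z_1}=\abs{t}^{1/2}$. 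This contradicts $K\equiv 0$, and it is how the paper concludes, uniformly in $\omega$.

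\textbf{The reduction.} Two points here also need repair.

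First, the expansion $\abs{\sigma'(s)}^2a(z-n\sigma(s),s)+O(1/n)$ is wrong when the lattice $\Lambda_s=\Z e_1(s)+\Z e_2(s)$ varies. A deck transformation $z\mapsto z+e_j(s)$ pulls $dz$ back to $dz+e_j'(s)\,ds$, so only $a$ is periodic. The coefficients $b$ and $c$ grow linearly and quadratically along the lattice, and at the point $z-n\sigma(s)$ they contribute at order $n^2$. For example, $\sigma$ constant in your trivialization still gives quadratic growth. The paper's chain-rule computation with $\abs{\partial_s T}^2$ has the same defect.

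The fix is to work in real Betti coordinates $z=x\,e_1(s)+y\,e_2(s)$, where all coefficients are periodic. Write $\sigma=\beta_1e_1+\beta_2e_2$, so that $T^n(s,x,y)=(s,x+n\beta_1(s),y+n\beta_2(s))$. Then on compact sets $n^{-2}T^n_*\omega=\omega_{xy}(s,x-n\beta_1,y-n\beta_2)\,d\beta_1\wedge d\beta_2+O(1/n)$. The correct prefactor is therefore $P(s)=\abs{\partial_s\beta_1\,e_1+\partial_s\beta_2\,e_2}^2$, not $\abs{\sigma'}^2$. Holomorphy of $\sigma$ forces the Betti map to have rank $0$ or $2$ at each point. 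Hence $P\neq 0$ on a dense open set once $(\beta_1,\beta_2)$ is non-constant, and that non-constancy is what parabolicity gives via monodromy, as in \cite{Cantat2001}.

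Second, torsion parameters are unnecessary, and ``non-constant'' alone would not make them dense; you need rank $2$. The paper instead fixes $s$ and a compact set $K\subset\C$ containing a fundamental domain. The map $z\mapsto z-n\sigma(s)$ carries $K$ onto a set that still contains a fundamental domain. By periodicity, $\sup_{z}\abs{P(s)a(z,s)-L(s)}$ is therefore bounded by the supremum over $\{s\}\times K$, which tends to $0$; here $L(s)$ is the $ds\wedge d\bar s$ coefficient of the limit. Since the left side is independent of $n$, it vanishes, which gives fiberwise flatness directly at each such $s$.
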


\begin{remark}
    The regularity of limit currents is also discussed in Cantat--Dujardin's recent paper \cite{Cantat-Dujardin-2023}*{Theorem D}.
    
    If the automorphism is hyperbolic, its associated limit current will generally have a Hölder-continuous potential. However, it is unknown whether the potential of a limit current given by a parabolic automorphism is Hölder-continuous.
\end{remark}

\begin{remark}
    Other related works on the parabolic dynamics of elliptic surfaces with general singular fibers include Duistermaat's monograph \cite{Duistermaat2010} and Cantat--Dujardin's recent paper \cite{Cantat-Dujardin-2023-B}*{Section 3}.
\end{remark}

Here we sketch the proof of \Cref{thm:main_thm}.
By a standard reduction, we can assume that $X_0$ is reduced and
$\alpha, \beta$
have a zero integral on each irreducible component of $X_0$.
Then it remains to show the continuity of the pairing in 
\Cref{thm:continuity_of_pairing_zero_singular_integral}:
We introduce \emph{the preferred potentials} 
(\Cref{dfn:preferred_potential}) for $\alpha,
\beta$. Then by Dai--Yoshikawa's work \cite{Dai-Yoshikawa2025},
we are able to derive some estimates on preferred potentials
(\Cref{thm:boundedness_of_preferred_potential_general}, \Cref{thm:preferred_potential_integrability_singular_fiber}).
Together with some continuity results of fiber integral
around singular fibers (\Cref{prop:central_convergence_compactness},
\Cref{cor:fiber_int_continuity_central_convergence}), 
we derive the continuity of the pairing (which is a fiber integral) in \Cref{thm:continuity_of_pairing_zero_singular_integral}.

This article is organized as follows. In \Cref{section:1-technical-preparation}, we collect and prove some technical results. In \Cref{section:2-spectral-geometry-dai-yoshikawa}, we study 
the properties of eigenfunctions with small eigenvalues
on smooth fibers of a degeneration. We use Dai-Yoshikawa's 
work \cite{Dai-Yoshikawa2025} heavily in this section. 
In \Cref{section:3-preferred-potentials}, we introduce the 
notion of a \emph{preferred potential} and establish its asymptotic 
estimates based on \Cref{section:2-spectral-geometry-dai-yoshikawa}.
In \Cref{section:4-Archimedean-pairing}, we introduce the 
\emph{Archimedean height pairing} and prove its asymptotics
around singular fibers. In \Cref{section:5-k3-dynamics},
we apply previous results to study the parabolic dynamics of 
K3 surfaces. In \Cref{section:6-appendix}, we prove 
a technical lemma on the asymptotics of a fiber integral.

\subsection*{Notation and conventions}  
On a complex manifold $X$, we define 
${\di}^c = \frac{i}{4\pi}(\bar{\partial} - \partial)$, 
so $\ddc = \frac{i}{2\pi}\partial\bar{\partial}$.

For a closed smooth differential form $\alpha$ on a compact differential manifold,
we use $[\alpha]$ to denote its de Rham cohomology class.

We use $\D \coloneqq \{z \in \C \colon\ \abs{z} < 1\}$ to denote the unit disk in complex plane, and $\D^\circ \coloneqq \D\setminus \{0\}$ 
the punctured unit disk. For a positive number $r > 0$, we also 
define $\D_r \coloneqq \{z \in \C \colon\ \abs{z} < r\}$ and 
$\D_r^\circ \coloneqq \D_r \setminus \{0\}$.

\vspace{1em}
\noindent\textbf{Acknowledgements.} The author thanks his advisor Valentino Tosatti
for introducing the topic, for helpful discussions
and for his constant support. The author also thanks
Simion Filip, Mikhael Gromov, Fulin Xu for helpful discussions,
and Xianzhe Dai, Romain Dujardin, Robin de Jong, Ken-Ichi Yoshikawa for helpful conversations, as well as careful reading and feedback on an earlier draft.

\section{Technical Preparation}
\label{section:1-technical-preparation}
We collect and prove some technical results that will be used later.
\subsection{Laplacian on singular varieties}
Let $X$ be an algebraic variety with the induced Fubini-Study metric, and let $X_{\rm reg}$ be the regular part of $X$.

Following Li-Tian's work \cite{Li_Tian_Heat_Kernel_1995}*{Section 4}, we define the ordinary Laplacian $\Delta$
on compactly supported smooth functions on $X_{\rm reg}$ and 
let $\overline{\Delta}$ be its Friedrichs extension. Then we have

\begin{theorem}[\cite{Li_Tian_Heat_Kernel_1995}*{Section 4}]
    \label{thm:harmonic_function_algebraic_variety}
    The Friedrichs extension $\overline{\Delta}$ defined 
    on functions is self-adjoint on a compact variety $X$ 
    (without boundary) whose singular set $\Sing X$ is at least
    of real codimension $2$.

When $f \in W^{1,2}(X_{\rm reg})$ is \emph{harmonic}, i.e., 
    $\Delta f = 0$ almost everywhere, we have $\di f = 0$ and $f$ is locally constant.
\end{theorem}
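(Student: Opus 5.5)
The plan has two parts. First I show that the Friedrichs extension is self-adjoint. Then I show that harmonic $W^{1,2}$ functions are locally constant. Throughout I work on $X_{\rm reg}$ with the smooth Fubini--Study metric. The key geometric input is that $\Sing X$ has real codimension at least $2$ and, being an analytic subset of a compact variety, has finite Minkowski content of that codimension. Consequently there are cutoff functions $\chi_\epsilon\in C_c^\infty(X_{\rm reg})$ with $0\le\chi_\epsilon\le 1$, $\chi_\epsilon\to 1$ pointwise on $X_{\rm reg}$, and $\|\di\chi_\epsilon\|_{L^2}\to 0$. In real codimension exactly $2$, the naive choice $\chi_\epsilon=\rho(d/\epsilon)$, with $d$ the distance to $\Sing X$, only gives a bounded $L^2$ norm of the gradient. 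I would therefore use the logarithmic cutoff
\[
\chi_\epsilon=\rho\bigl(\log(-\log d)/\log(-\log\epsilon)\bigr),
\]
whose gradient is supported where $\epsilon\le d\le$ a fixed constant. Combining this with the volume estimate $\mathrm{vol}\{d<r\}\le Cr^2$, a dyadic decomposition gives $\|\di\chi_\epsilon\|_{L^2}^2\lesssim 1/\log(-\log\epsilon)\to 0$, up to harmless factors. Establishing this capacity-zero statement is the main obstacle. It needs the local volume growth of tubes around an analytic subset; I would take that from Lelong-type or Wirtinger bounds for the projective embedding.

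\emph{Self-adjointness.} The Friedrichs extension is always self-adjoint as an operator, since it is built from the closed quadratic form $q(f)=\int|\di f|^2$ on the completion of $C_c^\infty(X_{\rm reg})$ in the $W^{1,2}$ norm. What needs checking is that this form domain is all of $W^{1,2}(X_{\rm reg})$, so that $\overline{\Delta}$ is the natural Laplacian, i.e.\ that $\Delta$ has a unique nonnegative closed form extension. Take $f\in W^{1,2}$. Truncating to $\max(\min(f,N),-N)$ and mollifying, I may assume $f$ is bounded and smooth on $X_{\rm reg}$. Then
\[
\|\di(\chi_\epsilon f)-\di f\|_{L^2}\le \|(1-\chi_\epsilon)\di f\|_{L^2}+\|f\|_\infty\|\di\chi_\epsilon\|_{L^2}\to 0
\]
by dominated convergence and the cutoff estimate. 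Hence $C_c^\infty(X_{\rm reg})$ is dense in $W^{1,2}(X_{\rm reg})$, and the form domain is the full $W^{1,2}$. Compactness of $X$ gives finite volume, so constants belong to $W^{1,2}$.

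\emph{Harmonic functions.} Interpret ``$\Delta f=0$'' for $f\in W^{1,2}$ weakly, i.e.\ $f$ lies in the domain of $\overline{\Delta}$ with $\overline{\Delta}f=0$. Elliptic regularity on $X_{\rm reg}$ then makes $f$ smooth there. The form identity gives $\int|\di f|^2=\langle\overline{\Delta}f,f\rangle=0$. Concretely, pair $\Delta f$ with $\chi_\epsilon^2 f$ for bounded truncations, integrate by parts with no boundary terms because $\chi_\epsilon$ has compact support, and bound the cross term $2\int\chi_\epsilon f\,\di\chi_\epsilon\cdot\di f$ by Cauchy--Schwarz using $\|\di\chi_\epsilon\|_{L^2}\to0$. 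So $\di f=0$ almost everywhere, hence everywhere on $X_{\rm reg}$ by smoothness. Therefore $f$ is constant on each connected component of $X_{\rm reg}$, i.e.\ locally constant.
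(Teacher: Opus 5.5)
Your proposal is correct. The paper gives no proof of this statement: it simply imports it from Li--Tian, with the K\"ahler-space version credited to Guo--Phong--Song--Sturm. So your argument is a self-contained substitute for a citation rather than a variant of an internal proof.

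You rightly note that the first sentence is vacuous as literally stated, since a Friedrichs extension is always self-adjoint. The substantive content is $W^{1,2}_0(X_{\rm reg})=W^{1,2}(X_{\rm reg})$, i.e.\ that $\Sing X$ has zero capacity. The only geometric input your route needs is the tube bound $\mathrm{vol}\{d<r\}\le Cr^2$. The paper already proves this bound, for fibers and by the same covering-plus-monotonicity argument, in \Cref{lem:uniform_tubular_volume}. Your approach therefore makes explicit what the citation relies on at little extra cost. The citation, in turn, buys generality (arbitrary compact varieties, and K\"ahler spaces via GPSS) without redoing estimates.

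Two simplifications are available.

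First, the double-logarithmic cutoff is more than you need. The single-log cutoff $\rho(\log d/\log\epsilon)$ vanishes for $d\le\epsilon$, equals $1$ for $d\ge\sqrt{\epsilon}$, and satisfies $|\nabla\chi_\epsilon|\le C/(d\log\epsilon^{-1})$. In your dyadic count, each shell then contributes $O(1/\log^2\epsilon^{-1})$, and there are $O(\log\epsilon^{-1})$ shells. This gives $\|\nabla\chi_\epsilon\|_{L^2}^2\le C/\log\epsilon^{-1}$, the same rate as the Dai--Yoshikawa cutoffs in \Cref{lem:good_test_central}. These cutoffs are Lipschitz rather than $C^\infty_c$, which is harmless.

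Second, once $C^\infty_c(X_{\rm reg})$ is dense in $W^{1,2}(X_{\rm reg})$, the harmonic part is immediate. The identity $\int\langle\nabla f,\nabla\psi\rangle=0$ for $\psi\in C^\infty_c(X_{\rm reg})$ extends by continuity to all $\psi\in W^{1,2}$, and taking $\psi=f$ gives $\nabla f=0$.

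This also shows that your two readings of ``harmonic'' agree. A $W^{1,2}$ function that is weakly harmonic on $X_{\rm reg}$ automatically lies in $\mathrm{Dom}(\overline{\Delta})$ with $\overline{\Delta}f=0$. That matters for the paper: in \Cref{prop:central_convergence_compactness} the theorem is applied to $h\in W^{2,p}_{\rm loc}\cap W^{1,2}$ with $\Delta h=0$ only almost everywhere, not to an element known a priori to be in $\mathrm{Dom}(\overline{\Delta})$. Your truncation argument with test function $\chi_\epsilon^2 f_N$ handles that case as well. It yields $\int\chi_\epsilon^2|\nabla f_N|^2\le 2N\|\nabla\chi_\epsilon\|_{L^2}\|\nabla f\|_{L^2}$, so $\nabla f=0$ almost everywhere on each set $\{|f|<N\}$.
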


\begin{remark}
    This theorem holds when $X$ is a \Kahler space, see 
    \cite{GPSS-Sobolev}*{Lemma 11.2}.
\end{remark}

\subsection{Continuity of fiber integral à la Barlet}
In \cite{Barlet}, Barlet derived the asymptotic expansion of the 
fiber integral for the degeneration of complex manifolds near singular fibers.
\begin{theorem}[\cite{Barlet}*{Théorème 1}]
\label{thm:fib_integral_barlet}
Let $D = \{s \in \mathbb{C} \mid |s| < 1\}$, and let $X$ be a reduced and irreducible complex analytic space of dimension $n + 1$. Let $\pi : X \to D$ be a surjective holomorphic map, and let us denote by $X_s$ the cycle $\pi^{-1}(s)$. Let $K$ be a compact subset of $X$. There exist rational numbers $r_1 \leq \dots \leq r_k$ in $[0, 2) \cap \mathbb{Q}$ such that for every $C^\infty$ differential form $\vphi$ of type $(n, n)$ on $X$ with support in $K$, the function $F_\vphi \coloneqq \int_{X_s} \vphi$ admits, as $s \to 0$, an asymptotic expansion of the form:
\[
F_\vphi(s) \sim \sum_{\substack{r=r_1,\dots,r_k \\ j=0,\dots,n \\ (m,m')\in\mathbb{N}^2}} T^{r,j}_{m,m'}(\vphi) s^m \overline{s}^{m'} |s|^r (\log|s|)^j
\]
where $T^{r,j}_{m,m'}$ is a $(1, 1)$-current on $X$. 
\end{theorem}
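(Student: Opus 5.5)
Since this is Barlet's theorem, quoted from \cite{Barlet}, the plan is to reconstruct the standard argument: \emph{resolve the fibration to monomial form, localize, and compute the fiber integral through its Mellin transform.}

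\textbf{Reduction to a monomial map.} By Hironaka's embedded resolution of singularities applied to the divisor $X_0 = \pi^{-1}(0)$, I will take a proper modification $p\colon \widetilde X \to X$ with $\widetilde X$ smooth, $p$ an isomorphism over $X \setminus X_0$ (up to the singular locus of $X$, which is handled the same way), and $(\pi\circ p)^{-1}(0)$ a simple normal crossings divisor. Near any point of $p^{-1}(K)$ there are local coordinates $z_1,\dots,z_{n+1}$ with
\[
\pi\circ p = u(z)\, z_1^{a_1}\cdots z_{k}^{a_k}, \quad u \text{ a nonvanishing unit}.
\]
After extracting a local $a$-th root of $u$, where $a=\gcd(a_i)$, and rescaling one coordinate, I can take $u\equiv 1$. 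Since $p$ is biholomorphic over $s\neq 0$, we have $F_\vphi(s) = \int_{\widetilde X_s} p^*\vphi$. Using a finite partition of unity on the compact set $p^{-1}(K)$, it suffices to treat $\vphi$ smooth, compactly supported in one such chart. The finitely many multiplicities $a_i$ arising over $K$ will determine the exponents $r_j$; in particular these depend only on $K$, not on $\vphi$.

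\textbf{Mellin transform computation.} For $\psi\in C_c^\infty(\D)$ radial in $s$ (first decompose in Fourier modes in $\arg s$, which produces the factors $s^m\bar s^{m'}$), I will compute
\[
\int_{\D} F_\vphi(s)\,|s|^{2\lambda}\,\psi(s)\,\frac{i}{2}\,ds\wedge d\bar s = \int_{\widetilde X} |z_1^{a_1}\cdots z_k^{a_k}|^{2\lambda}\, p^*\vphi\wedge (\pi\circ p)^*\bigl(\psi\, \tfrac{i}{2} ds\wedge d\bar s\bigr).
\]
Taylor-expanding $p^*\vphi$ in $z_i,\bar z_i$ and integrating in polar coordinates shows that the right-hand side is meromorphic in $\lambda$. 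Its poles lie in the sets $\{-(\ell+1)/a_i\}$, with order at most the number of coordinates that attain the same pole, hence at most $n+1$. The coefficients (residues) are continuous linear functionals in $\vphi$, i.e.\ currents. Inverse Mellin transform (shifting the contour to the left and collecting residues) converts a pole of order $j+1$ at $-\rho$ into terms $|s|^{2\rho}(\log|s|)^j$. Writing $2\rho = r + 2m$ with $r\in[0,2)\cap\Q$ and absorbing $|s|^{2m}$ into $s^m\bar s^{m'}$ gives exactly the stated form with $j\le n$. The remainder after shifting the contour past $\mathrm{Re}\,\lambda = -N$ is $O(|s|^{2N})$, which justifies calling the series an asymptotic expansion.

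\textbf{Main obstacle.} I expect the difficult step to be the bookkeeping in the Mellin computation. One must show that the coefficients are genuinely $(1,1)$-currents, i.e.\ continuous in $\vphi$ in the $C^\infty$ topology with bounds uniform over the finitely many charts. One must also check that the residues from different charts glue: they are chart-dependent individually, but their sum is intrinsic because $F_\vphi$ is. Finally, the log power must be bounded by $n$ rather than $n+1$. For this last point I would try to use that the fiber has dimension $n$: one coordinate direction is used up transversally by the $ds\wedge d\bar s$ factor, which removes one possible coincidence of poles.
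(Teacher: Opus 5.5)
The paper gives no proof of this statement: it quotes Barlet, and later uses only the consequence that $F_\varphi$ is H\"older continuous. Your skeleton is the standard route to this theorem: resolve to a normal-crossing model, localize with a partition of unity on $p^{-1}(K)$, then analyse the monomial chart. The reductions are fine; to normalize $u\equiv 1$ you only need an $a_1$-th root of $u$ absorbed into $z_1$, and the gcd plays no role. However, your ``main obstacle'' paragraph misidentifies where the difficulty lies.

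\textbf{Log power and gluing.} Your own pole count already settles the log power. In an $(n+1)$-dimensional chart each coordinate contributes at most a simple pole at a given $\lambda$, so poles have order at most $n+1$. A pole of order $j+1$ gives $(\log|s|)^j$, hence $j\le n$. There is no further coincidence to remove, and an argument that removed one would prove $j\le n-1$, which is false. Take $X=\mathbb{C}^2$, $\pi=z_1z_2$ and $\varphi=\chi\,\bar z_1 z_2\,\tfrac{i}{2}dz_1\wedge d\bar z_2$ with $\chi$ a product cutoff. On $X_s$, parametrized by $z_1$, one gets $\varphi|_{X_s}=-\chi\,|s|^2|z_1|^{-2}\tfrac{i}{2}dz_1\wedge d\bar z_1$. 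Hence $F_\varphi(s)=2\pi|s|^2\log|s|+O(|s|^2)$, so $j=n=1$ occurs. Gluing of residues is also not an issue. Write $F_\varphi=\sum_\nu F_{\rho_\nu\varphi}$; each chart gives an expansion, and the functions $s^m\bar s^{m'}|s|^r(\log|s|)^j$ are asymptotically independent. So the coefficients are unique, and $T^{r,j}_{m,m'}(\varphi)$ is simply the sum of the chart coefficients.

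\textbf{The step that actually needs proof.} Decomposing in Fourier modes of $\arg s$ does not by itself ``produce the factors $s^m\bar s^{m'}$''. Mode by mode you obtain terms $|s|^{2\rho}e^{ik\theta}(\log|s|)^j$. Such a term has the form $s^m\bar s^{m'}|s|^r$ with $m,m'\in\mathbb{N}$ and $r\in[0,2)$ only if $2\rho\ge|k|$; the bounded function $s/|s|$ shows this is not automatic. The inequality is true, and it drops out of your computation if you use the twisted kernel. Pulling back $|s|^{2\lambda}e^{-ik\arg s}$ gives $|f|^{2\lambda}(\bar f/|f|)^{k}$, and the angular integration in each $z_l$ then forces every pole to lie at $\lambda+1\le-|k|/2$.

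Beyond this you still need two things:
\begin{enumerate}
\item Decay of the Mellin transform along vertical lines, uniformly in strips, to justify shifting the contour.
\item Uniform control over the infinitely many modes $k$, so that the mode-wise expansions resum to one expansion of $F_\varphi$ with remainder $O(|s|^N)$ uniformly in $\arg s$. For instance, in a monomial chart the rotation $z_1\mapsto e^{i\phi/a_1}z_1$ covers $s\mapsto e^{i\phi}s$. So $\arg s$-derivatives of $F_\varphi$ are again fiber integrals of smooth compactly supported forms, to which the same analysis applies.
\end{enumerate}

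Alternatively, you can bypass Mellin inversion altogether. Parametrize $\{z^a=s\}$ in the chart and integrate Taylor terms directly, in the spirit of the proof of \Cref{lem:log_norm_section_fib_int_snc_case}. This keeps the $s$- and $\bar s$-exponents separate and produces the stated form directly.
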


Since $F_{\vphi}(s)$ does not blow up as $s \to 0$, we have 
$T^{0,j}_{0,0}(\vphi) = 0$ for $j \geq 1$.
Let $0 < r$ be the minimum of non-zero $r_1
\leq \cdots \leq r_k$ in $[0, 2) \cap \Q$,
then for $0 < a < \min(r, 1)$, we have 
\[\abs{F_\vphi(s) - F_\vphi(0)} \leq C(\vphi) \abs{s}^a.\]
Thus $F_{\vphi}(s)$ is Hölder continuous around $0$.

\begin{remark}
    When the base of the fibration $\pi$ is of higher 
    dimension, Takayama extends Barlet's result in his 
    recent paper \cite{Takayama2020}.
\end{remark}

\subsection{Convergence on the singular fiber}

Let $\pi \colon X \to \D$ be a proper surjective holomorphic map 
from a complex manifold $X$ of complex dimension 
$p = q + 1$ to the unit disk.
Suppose that $\pi$ has connected fiber and $X_0 \coloneqq \pi^{-1}(0)$ 
is the unique singular fiber. We define
$\Sing(X_0) \coloneqq \{ x \in X_0 \mid d\pi_x = 0 \}$ and 
$X_{0, \rm{reg}} \coloneqq X_0 \setminus \Sing(X_0)$. 
Let $X_s \coloneqq \pi^{-1}(s)$.

We introduce the following frequently
used notion of convergence on the singular fiber.
\begin{definition}[Convergence on the singular fiber]
    \label[definition]{dfn:central_fib_convergence}
    Under the above settings, let $(s_n)_{n \geq 1} \subset \D^\circ$ be a sequence such that $s_n \to 0$ as $n \to \infty$.
    Let $(f_{s_n})_{n \geq 1}$ be 
    a family of $C^{k}$ functions on $X_{s_n}$ and $f$
    be a $C^k$ function on $X_{0,\rm reg}$. Around any 
    $p \in X_{0,\rm reg}$, the map $\pi$ is a locally trivial fibration 
    by Ehresmann's theorem, so we have an open neighborhood $U$ of $p$
    with the following diagram:
\[\begin{tikzcd}
	{i: U} & {V \times \D_{\epsilon}} \\
	& {\D_{\epsilon}}
	\arrow["\simeq", from=1-1, to=1-2]
	\arrow["\pi"', from=1-1, to=2-2]
	\arrow[from=1-2, to=2-2]
\end{tikzcd}\]

Then we have a family of diffeomorphisms $i_{s} 
\colon U \cap X_s \to V \times \{s\}$. For $s \in \D_{\epsilon}$,
we define $\tilde{f_s}\coloneqq (i_s)_{\ast}(f_s)$ 
to be a $C^k$
function on $V$, and define $\tilde{f}\coloneqq (i_0)_{\ast}(f)$. 

We say \emph{$f_{s_n}$ converges to $f$ in $C^k_{\rm loc}$
on $X_{0,\rm reg}$} if around any $p \in X_{0,\rm reg}$,
we have a coordinate chart $U \simeq V \times \D_\epsilon$ and 
\(
    \tilde{f_{s_n}} \to \tilde{f} 
\)
in the $C^{k}_{\rm loc}(V)$ topology. This notion of convergence 
does not depend 
on the choice of coordinate charts.
\end{definition}

\begin{remark}
    Let $f \colon X \setminus X_0 \to \R$ be a continuous
    function. Then $f$ can be extended
    to be a continuous function on $X \setminus \Sing X_0$
    if and only if $f_{s_n}$ converges to a continuous
    function $f_0 \in C^0(X_{0,\rm reg})$ in $C^0_{\rm loc}$ on $X_{0,\rm reg}$ for any sequence $s_n \to 0$.
\end{remark}

We shall show how the convergence on the singular fiber 
could imply the convergence of the fiber integral in 
the following.
\begin{lemma}
    \label[lemma]{lem:central_fib_integrability}
    Suppose $f_{s_n} \to f$ in $C^0_{\rm loc}$ on $X_{0,\rm reg}$ and 
    $f$ is 
    a continuous function on $X_{0,\rm reg}$. Let $\omega$ be a 
    Hermitian form on $X$. Then 
    \[
        \int_{X_{0,\rm reg}} \abs{f}^l \omega^q \leq 
        \limsup_{n \to \infty} \int_{X_{s_n}} \abs{f_{s_n}}^l \omega^q
    \]
    for $1 \leq l < \infty$.

    Generally, if $f_{s_n} \to f$ in $C^k_{\rm loc}$ on $X_{0,\rm reg}$, then 
    \[
        \int_{X_{0,\rm reg}} \abs{\nabla^r f}^l \omega^q \leq 
        \limsup_{n \to \infty} \int_{X_{s_n}} \abs{\nabla^r f_{s_n}}^l 
        \omega^q
    \]
    for $0 \leq r \leq k, 1 \leq l < \infty$.
\end{lemma}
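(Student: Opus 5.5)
This is a Fatou-type lower semicontinuity argument: exhaust $X_{0,\rm reg}$ by compact sets, use the local product charts of \Cref{dfn:central_fib_convergence} to get convergence of the integrals over each compact piece, and then let the compact sets grow.

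\textbf{Step 1: reduction to compact sets.} Fix a compact set $K \subset X_{0,\rm reg}$. Since $\abs{f}^l \omega^q \geq 0$ on $X_{0,\rm reg}$, monotone convergence along an exhaustion $K_1 \subset K_2 \subset \cdots$ of $X_{0,\rm reg}$ gives
\[
\int_{X_{0,\rm reg}} \abs{f}^l\omega^q = \sup_K \int_K \abs{f}^l \omega^q .
\]
It therefore suffices to show, for each such $K$,
\[
\int_K \abs{f}^l\omega^q \leq \liminf_{n\to\infty}\int_{X_{s_n}} \abs{f_{s_n}}^l\omega^q .
\]

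\textbf{Step 2: local charts.} Cover $K$ by finitely many charts $U_j \simeq V_j\times\D_{\epsilon}$ as in \Cref{dfn:central_fib_convergence}, with $V_j$ relatively compact and the trivializations extending smoothly to a neighbourhood of $\overline{U_j}$. Choose a partition of unity $\{\chi_j\}$, with $\chi_j$ smooth on $X$, supported in $U_j$, satisfying $0 \le \sum_j \chi_j \le 1$ everywhere and $\sum_j\chi_j = 1$ on $K$. Transport everything to the fixed manifold $V_j$ via $i_s$. The pulled-back forms $\omega_s^{(j)} \coloneqq (i_s^{-1})^*(\omega^q|_{X_s\cap U_j})$ depend smoothly on $s \in \D_\epsilon$, so $\omega_{s_n}^{(j)}\to\omega_0^{(j)}$ uniformly on $\overline{V_j}$. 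Likewise $\tilde\chi_{j,s_n}\to\tilde\chi_{j,0}$ uniformly. By hypothesis $\tilde f_{s_n}\to\tilde f$ uniformly on the compact set $\operatorname{supp}\tilde\chi_{j,0}$, together with a slightly larger compact neighbourhood of it that contains $\operatorname{supp}\tilde\chi_{j,s_n}$ for $n$ large.

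\textbf{Step 3: convergence on $K$.} The integrands are uniformly bounded and converge uniformly on compact subsets of each $V_j$, so
\[
\int_{X_{s_n}}\chi_j\abs{f_{s_n}}^l\omega^q \longrightarrow \int_{X_0}\chi_j\abs{f}^l\omega^q .
\]
Summing over $j$, and using $\sum_j\chi_j\le 1$ together with nonnegativity of the integrand (here one uses that $\omega^q$ restricts to a positive volume form on each fiber, since $\omega$ is Hermitian), we get
\[
\int_K\abs{f}^l\omega^q \le \sum_j\int_{X_0}\chi_j\abs f^l\omega^q = \lim_{n\to\infty}\sum_j \int_{X_{s_n}}\chi_j\abs{f_{s_n}}^l\omega^q \le \liminf_{n\to\infty}\int_{X_{s_n}}\abs{f_{s_n}}^l\omega^q .
\]
This proves the first inequality.

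\textbf{The derivative case.} For $\nabla^r$, the same argument applies. The norm $\abs{\nabla^r f_s}$, computed with the metric induced by $\omega$ on $X_s$, becomes, after pullback to $V_j$, a continuous expression in the coordinate derivatives of $\tilde f_s$ up to order $r$. Its coefficients depend smoothly on $s$, because the induced metrics and their Christoffel symbols vary smoothly in the chart. So $C^k_{\rm loc}$ convergence gives $\abs{\nabla^r f_{s_n}}^l\to\abs{\nabla^r f}^l$ locally uniformly, and Steps 1--3 go through verbatim.

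\textbf{Main obstacle.} The only genuine subtlety is the bookkeeping in Step 2: the cutoff functions and the volume forms are themselves $s$-dependent after transport to $V_j$. This is handled by choosing $\chi_j$ smooth on the total space $X$ and noting that all the relevant data vary smoothly in $s$. Nothing is needed near $\Sing X_0$, because the inequality only requires a lower bound: mass escaping toward the singular points can only help.
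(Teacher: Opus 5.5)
Your proposal is correct and follows essentially the same route as the paper: finitely many trivializing charts covering a compact $K\subset X_{0,\rm reg}$, a partition of unity with $\sum_j\chi_j\le 1$ everywhere and $\sum_j\chi_j=1$ on $K$ (exactly \Cref{lem:partition_of_unity_compact_set}), chartwise convergence of the localized integrals, and then exhausting $X_{0,\rm reg}$ by compact sets. You are more explicit than the paper about the $s$-dependence of the transported cutoffs and volume forms; this is harmless once the $\chi_j$ are taken compactly supported in $U_j$. You also obtain the slightly stronger $\liminf$ bound, which the paper's argument gives implicitly as well.
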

Before the proof, we need a partition of unity 
tailored for our situation:
\begin{lemma}
    \label[lemma]{lem:partition_of_unity_compact_set}
    Let $X$ be a smooth manifold and $K \subset X$ be a compact subset. Suppose $\{U_i\}_{i=1}^N$ is a finite open covering of $K$. Then there exists a collection of smooth functions $\{\rho_i\}_{i=1}^N$ with $\rho_i: X \to \mathbb{R}$ such that:
\begin{enumerate}
    \item $\operatorname{supp}(\rho_i) \subset U_i$ for each $i = 1, \dots, N$.
    \item $0 \leq \rho_i(x) \leq 1$ for all $x \in X$.
    \item $\sum_{i=1}^N \rho_i(x) = 1$ for all $x \in K$.
    \item $\sum_{i=1}^N \rho_i(x) \leq 1$ for all $x \in X$.
\end{enumerate}
\end{lemma}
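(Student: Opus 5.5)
The plan is to build the functions from a standard partition of unity subordinate to a cover of all of $X$, and then cut off away from $K$. First I add one more open set, $U_0 \coloneqq X \setminus K$, which is open because $K$ is compact and hence closed in the Hausdorff space $X$. Then $\{U_0, U_1, \dots, U_N\}$ is an open cover of $X$. The usual existence theorem for smooth partitions of unity on manifolds gives smooth functions $\{\psi_i\}_{i=0}^N$ with:
\begin{itemize}
    \item $0 \leq \psi_i \leq 1$;
    \item $\operatorname{supp}(\psi_i) \subset U_i$;
    \item $\sum_{i=0}^N \psi_i \equiv 1$ on $X$.
\end{itemize}
Since the index set is finite, the partition is automatically locally finite, and the sum is a finite sum of smooth functions.

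Next I set $\rho_i \coloneqq \psi_i$ for $1 \leq i \leq N$ and discard $\psi_0$. I then check the four properties in turn.
\begin{enumerate}
    \item The support condition holds by construction.
    \item The bound $0 \leq \rho_i \leq 1$ is inherited from the $\psi_i$.
    \item For $x \in K$, we have $x \notin U_0 \supset \operatorname{supp}(\psi_0)$, so $\psi_0(x) = 0$. Hence $\sum_{i=1}^N \rho_i(x) = 1 - \psi_0(x) = 1$.
    \item For arbitrary $x \in X$, we have $\sum_{i=1}^N \rho_i(x) = 1 - \psi_0(x) \leq 1$, because $\psi_0 \geq 0$.
\end{enumerate}

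The only point needing care is the support convention, namely whether the supports are taken as closed subsets of $X$ inside $U_i$. The standard theorem provides closed supports contained in $U_i$ (supports in $X$, not merely relatively closed), which is exactly what property (1) requires. There is no real obstacle: the argument reduces to the standard existence of partitions of unity, which uses paracompactness of manifolds. If one prefers to avoid that, an alternative is to pick bump functions $\chi_i$ with compact support in $U_i$ such that $\sum_i \chi_i > 0$ on $K$, and to normalize by $\max(\sum_i \chi_i, 1)$ after arranging $\sum_i \chi_i \geq 1$ on $K$. That route, however, requires smoothing the $\max$, so the first construction is preferable.
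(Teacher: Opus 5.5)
Your proposal is correct and is the paper's argument: both adjoin $X \setminus K$ to the cover, take a smooth partition of unity subordinate to $\{U_1, \dots, U_N, X \setminus K\}$, and discard the function attached to $X \setminus K$. Your explicit checks of properties (3) and (4) via $\sum_{i=1}^N \rho_i = 1 - \psi_0$ supply details that the paper leaves to the reader.
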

\begin{proof}(of \Cref{lem:partition_of_unity_compact_set})
    Since $K$ is compact and $\{U_i\}_{i=1}^N$ is an open covering of $K$, the collection
    of open sets $\mathcal{U} = \{U_1, 
    U_2, \cdots, U_N, X \setminus K\}$ is an open 
    covering of $X$. Let $\{\rho_1, 
    \rho_2, \cdots, \rho_{N}, \rho_{N+1}\}$ 
    be a partition of unity subordinate to 
    $\mathcal{U}$. 
    Then we obtain the desired $\{\rho_i\}_{i=1}^N$.
\end{proof}

\begin{proof}(of \Cref{lem:central_fib_integrability})
    Let $U \simeq V \times \D$ be a coordinate chart around some point 
    in $X_{0, \rm reg}$ trivializing the fibration $\pi \colon X 
    \to \D$. Let $h$
    be a continuous function on $X$ compactly supported on $U$. Then 
    if $f_{s_n} \to f$ in $C^k_{\rm loc}$ on $X_{0, \rm reg}$, 
    we have $h_{s_n} \abs{\nabla^r f_{s_n}} \to h_0 \abs{\nabla^r f}$
    in the $C^{0}(V)$-topology. In particular, we have 
    $\int_{X_{0, \rm reg}}h \abs{\nabla^r f}^l \omega^q = \lim_{n \to \infty} \int_{X_{s_n}} 
    h\abs{\nabla^r f_{s_n}}^l 
    \omega^q$.

For any compact subset $K$ of $X_{0, \rm reg}$, let $\{U_i\}_{i=1}^N$ 
    be an open covering of $K$ by open sets in $X\setminus \Sing X_0$ and 
    we assume that each 
    $U_i \simeq V_i \times \D$ is a coordinate chart
    that trivializes the fibration $\pi$.

    We then take the partition of unity $\{h_i\}_{i=1}^N$
    subordinate to $\{U_i\}_{i=1}^N$ as in 
    \Cref{lem:partition_of_unity_compact_set}, then for each $i$, we have 
    \[
    \int_{X_{s_n}} 
    h_i\abs{\nabla^r f_{s_n}}^l 
    \omega^q
    \to 
    \int_{X_{0, \rm reg}}h_i \abs{\nabla^r f}^l \omega^q.
    \]
    Therefore, we have 
    \begin{align*}
        \int_{K}\abs{\nabla^r f}^l \omega^q
    \leq
    \sum_{i=1}^N
    \int_{X_{0, \rm reg}}h_i \abs{\nabla^r f}^l \omega^q
    &= \lim_{n \to \infty}
    \sum_{i = 1}^N 
    \int_{X_{s_n}} 
    h_i\abs{\nabla^r f_{s_n}}^l 
    \omega^q
    \\
    &\leq 
    \limsup_{n \to \infty} \int_{X_{s_n}} \abs{\nabla^r f_{s_n}}^l 
    \omega^q.
    \end{align*}
    As $K$ can be taken to be any compact
    subset of $X_{0, \rm reg}$, we conclude 
    that 
    \[
    \int_{X_{0, \rm reg}} \abs{\nabla^r f}^l \omega^q \leq 
    \limsup_{n \to \infty} \int_{X_{s_n}} \abs{\nabla^r f_{s_n}}^l 
    \omega^q.
    \]
\end{proof}

If $f_{s_n}$ in the previous lemma (\Cref{lem:central_fib_integrability}) is zero in a tubular neighborhood 
of $\Sing X_0$, then we have the equality for the limit of fiber 
integral.
\begin{lemma} 
    \label[lemma]{lem:fiber_int_continu_no_singularity}
    Suppose $f_{s_n}$
    converges to $f$ in $C^0_{\rm loc}$ on $X_{0,\rm reg}$ and there 
    exists $\epsilon > 0$ such that $f_{s_n}, f$ are zero in 
    a tubular neighborhood
    $B(\Sing X_0, \epsilon) \coloneqq 
    \{p \in X \colon \dist(p, \Sing X_0) < \epsilon\}$ of 
    $\Sing X_0$. Then for any 
    smooth $(q, q)$ form $\alpha$, we have 
    \[
        \int_{X_{s_n}} f_{s_n} \alpha \to 
        \int_{X_0} f \alpha, 
        \quad 
        \text{as } n \to \infty.
    \]
\end{lemma}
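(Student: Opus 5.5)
Since $f_{s_n}$ and $f$ vanish on $B(\Sing X_0,\epsilon)$, every integral in the statement only sees the compact set $K \coloneqq \pi^{-1}(\overline{\D_{r}}) \setminus B(\Sing X_0, \epsilon/2)$, for any fixed $0<r<1$. On $K$ the map $\pi$ is a submersion near $X_0$. The plan is to cover the compact set $K_0 \coloneqq X_0 \setminus B(\Sing X_0,\epsilon/2)$ by finitely many trivializing charts $U_i \simeq V_i \times \D_{\delta}$ as in \Cref{dfn:central_fib_convergence}. We take the $V_i$ relatively compact, and reduce to a local statement in each chart via the partition of unity of \Cref{lem:partition_of_unity_compact_set}.

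\textbf{Step 1 (localization).} By compactness of $K_0$, there is $\delta>0$ such that for $|s|<\delta$ the set $X_s \setminus B(\Sing X_0,\epsilon)$ is contained in $\bigcup_i U_i$. To see this, argue by contradiction: otherwise there are points $x_n \in X_{s_n'}$ with $s_n' \to 0$, lying outside $B(\Sing X_0,\epsilon)$ and outside $\bigcup U_i$. By properness of $\pi$ they accumulate at a point of $X_0 \setminus B(\Sing X_0,\epsilon) \subset K_0$, and that point lies in the open set $\bigcup U_i$, a contradiction.

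Apply \Cref{lem:partition_of_unity_compact_set} to the compact set $K' \coloneqq \pi^{-1}(\overline{\D_{\delta/2}}) \setminus B(\Sing X_0,\epsilon)$ and the cover $\{U_i\}$. This gives $\rho_i$ with $\sum_i \rho_i = 1$ on $K'$. Since $f_{s_n}$ and $f$ vanish off $K'$ in the relevant fibers, for $n$ large
\[
\int_{X_{s_n}} f_{s_n}\alpha = \sum_i \int_{X_{s_n}} \rho_i f_{s_n}\alpha, \qquad \int_{X_0} f\alpha = \sum_i \int_{X_0}\rho_i f\alpha .
\]
Here $\int_{X_0} f\alpha$ means the integral over $X_{0,\rm reg}$, which is well-defined because $f$ is supported away from $\Sing X_0$. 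For non-reduced components it should be counted with multiplicity as the cycle $X_0$; but near $X_{0,\rm reg}$, where $d\pi\neq0$, the fiber is reduced, so no multiplicity issue arises on the support.

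\textbf{Step 2 (convergence in a chart).} In a chart $U_i \simeq V_i\times\D_\delta$, pull back via $i_s$. Then $\int_{X_s}\rho_i f_s\alpha = \int_{V_i} \widetilde{(\rho_i)}_s\,\tilde f_s\,\widetilde{\alpha}_s$, where $\widetilde{\alpha}_s = i_s^*$-transported restriction of $\alpha$. This is a smooth $(q,q)$-form, viewed as a real $2q$-form on $V_i$, depending smoothly on $s$ up to $s=0$. Likewise $\widetilde{(\rho_i)}_s \to \widetilde{(\rho_i)}_0$ uniformly, and the supports stay inside a fixed compact $L_i \subset V_i$ (shrink the charts beforehand so that $\operatorname{supp}\rho_i \cap \pi^{-1}(\D_{\delta/2})$ maps into $L_i\times \D_{\delta/2}$). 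The hypothesis gives $\tilde f_{s_n}\to\tilde f$ uniformly on $L_i$. Hence the integrands converge uniformly on the compact set $L_i$, and the integrals converge. Summing over the finitely many $i$ gives the claim.

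The main (mild) obstacle is Step 1, the uniform choice of $\delta$ and compact supports $L_i$, so that all the mass of $f_{s_n}$ for large $n$ lies in finitely many trivializing charts over a fixed compact set. This is exactly where the vanishing on $B(\Sing X_0,\epsilon)$ is used. Without it, mass could escape toward $\Sing X_0$, and one only gets the inequality of \Cref{lem:central_fib_integrability}.
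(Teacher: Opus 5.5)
Your proposal is correct and follows essentially the same route as the paper. Both cover $X_0 \setminus B(\Sing X_0,\epsilon)$ by finitely many trivializing charts, use properness of $\pi$ to make this cover work for every fiber over a small disk, split the integrals with the compact-set partition of unity, and pass to the limit chart by chart. The paper puts the tubular neighborhood into the cover as $U_0$ and notes that its terms vanish, whereas you apply the partition of unity only on the complement $K'$; the two are equivalent, and your step ensuring the supports lie in fixed compact sets $L_i \subset V_i$ makes explicit a point the paper leaves implicit.
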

\begin{proof}
Let $U_\epsilon = B(\Sing X_0, \epsilon)$ be an open tubular neighborhood of the singular locus. 
Since $X_0 \setminus U_\epsilon$ is compact and contained in 
the regular locus of $X_0$, we can cover it by a 
finite number of open sets $\{U_i\}_{i = 1}^N$ in $X\setminus \Sing X_0$ and 
we assume that each 
$U_i \simeq V_i \times \D$ is a coordinate chart
that trivializes the fibration $\pi$. Then we have an open
covering $U_\epsilon \cup \{U_i\}_{i=1}^N$ of $X_0$.

Since $\pi$ is proper, we can find $\epsilon_0 > 0$ such that 
$\pi^{-1}(\overline{\D_{\epsilon_0}}) \subset U_\epsilon \cup
\cup_{i=1}^N U_i$. We apply \Cref{lem:partition_of_unity_compact_set}
to the compact set $\pi^{-1}(\overline{\D_{\epsilon_0}})$ and its 
open covering $U_\epsilon \cup \{U_i\}_{i=1}^N$, then we have a 
collection of smooth functions $\{\rho_i \colon U_i \to \R\}_{i=0}^N$ (we denote 
$U_\epsilon$ by $U_0$)
such that 
\begin{itemize}
    \item $\operatorname{supp}(\rho_i) \subset U_i$ for each $i =0, 1, \dots, N$,
    \item $0 \leq \rho_i(x) \leq 1$ for all $x \in X$,
    \item $\sum_{i=0}^N \rho_i(x) = 1$ for all $x \in \pi^{-1}(\overline{\D_{\epsilon_0}})$. 
\end{itemize}

Then for $n \gg 1$ such that $\abs{s_{n}} < \epsilon_0$, we 
have 
\[
\int_{X_{s_n}} f_{s_n} \alpha = 
\sum_{i=0}^N \int_{X_{s_n}} \rho_i f_{s_n} \alpha 
= \sum_{i=0}^N \int_{X_{s_n} \cap U_i} \rho_i f_{s_n} \alpha
\]
And 
\[
\int_{X_{0}} f \alpha = 
\sum_{i=0}^N \int_{X_{0}} \rho_i f \alpha 
= \sum_{i=0}^N \int_{X_{0} \cap U_i} \rho_i f \alpha
\]

Since $f_{s_n}, f$ are zero in $U_0 = U_\epsilon$, we have 
$$\int_{X_{0} \cap U_i} \rho_0 f \alpha = 0 = 
\int_{X_{s_n} \cap U_0} \rho_0 f_{s_n} \alpha$$

For $1 \leq i \leq N$, the fibration $\pi$ on $U_i \simeq V_i \times \D$
is trivial and $(\rho_i f_{s_i}\alpha) \mid_{V_i} \to (\rho_i f \alpha) \mid_{V_i}$ in $C^0_{\rm loc}$. So 
\[
\int_{X_{s_n} \cap U_i} \rho_i f_{s_n} \alpha 
= \int_{V_i}(\rho_i f_{s_i}\alpha) \mid_{V_i} 
\to \int_{V_i} (\rho_i f \alpha) \mid_{V_i} = \int_{X_{0} \cap U_i} \rho_i f \alpha.
\]
In summary, we have 
\[
\int_{X_{s_n}} f_{s_n} \alpha \to 
        \int_{X_0} f \alpha, 
        \quad 
        \text{as } n \to \infty.
\]
\end{proof}

We can extend above lemma if the contribution of fiber integral around singularities is small
enough, which is the following lemma.
\begin{lemma}
\label[lemma]{lem:fib_inte_continuity_full}
Suppose $f_{s_n}$
converges to $f$ in $C^0_{\rm loc}$ on $X_{0,\rm reg}$ and $X_0$
is reduced. Let $\omega$ be a \Kahler metric on $X$.

For any  $\epsilon > 0$, we define 
\[
I(\epsilon) = 
\limsup_{n\to \infty} \int_{B(\Sing X_0, \epsilon) \cap X_s}
\abs{f_{s_n}} \omega^q,
\]
where $B(\Sing X_0, \epsilon) \coloneqq 
\{p \in X \colon \dist(p, \Sing X_0) < \epsilon\}$ is a tubular 
neighborhood of $\Sing X_0$.

If $I(\epsilon) \to 0$ as $\epsilon \to 0$ and $f$ is $L^1$-integrable on $X_0$, then for any 
smooth $(q, q)$ form $\alpha$, we have
\[
\int_{X_{s_n}} f_{s_n} \alpha \to
\int_{X_0} f \alpha,
\quad
\text{as } n \to \infty.
\]
\end{lemma}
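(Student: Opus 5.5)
The plan is an $\epsilon/3$ argument: cut off near $\Sing X_0$ with a bump function, apply \Cref{lem:fiber_int_continu_no_singularity} to the part away from the singular set, and control the part near $\Sing X_0$ by $I(\epsilon)$ on the smooth fibers and by $L^1$-integrability of $f$ on $X_0$.

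Fix $\epsilon > 0$. Choose a smooth cutoff $\chi_\epsilon \colon X \to [0,1]$ with $\chi_\epsilon \equiv 1$ on $B(\Sing X_0, \epsilon/2)$ and $\operatorname{supp}\chi_\epsilon \subset B(\Sing X_0, \epsilon)$. Split
\[
\int_{X_{s_n}} f_{s_n}\alpha = \int_{X_{s_n}} (1-\chi_\epsilon) f_{s_n}\alpha + \int_{X_{s_n}} \chi_\epsilon f_{s_n}\alpha,
\]
and split $\int_{X_0} f\alpha$ the same way.

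The functions $(1-\chi_\epsilon)f_{s_n}$ converge to $(1-\chi_\epsilon)f$ in $C^0_{\rm loc}$ on $X_{0,\rm reg}$, since $\chi_\epsilon$ is smooth on $X$. They also vanish on the tubular neighborhood $B(\Sing X_0,\epsilon/2)$. So \Cref{lem:fiber_int_continu_no_singularity} applies directly and gives
\[
\int_{X_{s_n}} (1-\chi_\epsilon) f_{s_n}\alpha \to \int_{X_0}(1-\chi_\epsilon) f\alpha .
\]

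For the remaining terms, use that $\alpha$ is a smooth $(q,q)$-form and $\omega$ is Kähler. On a relatively compact neighborhood of $X_0$ there is $C>0$ with $-C\omega^q \le \alpha \le C\omega^q$ on every complex $q$-dimensional subspace, so $|\alpha|_{X_s}| \le C\,\omega^q|_{X_s}$ as measures. Hence
\[
\limsup_{n\to\infty}\Bigl|\int_{X_{s_n}}\chi_\epsilon f_{s_n}\alpha\Bigr| \le C\, I(\epsilon).
\]
On the central fiber,
\[
\Bigl|\int_{X_0}\chi_\epsilon f\alpha\Bigr| \le C\int_{X_0\cap B(\Sing X_0,\epsilon)}|f|\,\omega^q .
\]
Here we use that $X_0$ is reduced, so the integral over $X_0$ means the integral over $X_{0,\rm reg}$ with no multiplicities, and that $\Sing X_0$ is a finite set of points, hence of measure zero. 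The right-hand side tends to $0$ as $\epsilon\to0$ by dominated convergence, because $f\in L^1(X_0)$ and $\bigcap_\epsilon B(\Sing X_0,\epsilon)\cap X_{0,\rm reg}=\emptyset$.

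Combining the three estimates,
\[
\limsup_{n\to\infty}\Bigl|\int_{X_{s_n}} f_{s_n}\alpha-\int_{X_0} f\alpha\Bigr| \le C\,I(\epsilon)+C\int_{X_0\cap B(\Sing X_0,\epsilon)}|f|\,\omega^q .
\]
Letting $\epsilon\to0$ and using the hypothesis $I(\epsilon)\to0$, both terms vanish, which proves the claim.

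The only point needing care is the uniform comparison $|\alpha|\le C\omega^q$ on all fibers near $X_0$, including near $\Sing X_0$ where the fibers degenerate. It holds because the bound is pointwise on $q$-planes in $TX$ and is uniform on a compact set $\pi^{-1}(\overline{\D_{\epsilon_0}})$. The remaining steps are routine.
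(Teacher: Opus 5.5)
Your proof is correct and is essentially the paper's argument: the same cutoff near $\Sing X_0$, the same use of \Cref{lem:fiber_int_continu_no_singularity} away from the singular set, and the same bounds by $C\,I(\epsilon)$ and by the $L^1$-tail of $f$ on $X_0$. The one difference is that you use dominated convergence where the paper cites the volume bound of \Cref{lem:uniform_tubular_volume}, which changes nothing; calling $\Sing X_0$ finite is only right in the surface case, but it is always a proper analytic subset of $X_0$, hence $\omega^q$-null, so the argument stands.
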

\begin{proof}
    For any fixed $\epsilon > 0$, we have a cutoff function
    $\eta_\epsilon \in C^\infty(X)$ such that 
    \begin{itemize}
        \item $0 \leq \eta_\epsilon \leq  1$,
        \item $\eta_\epsilon = 1$ on $B(\Sing X_0, \epsilon/2)$,
        \item $\eta_\epsilon = 0$ on $X\setminus B(\Sing X_0, \epsilon)$.
    \end{itemize}
    Then by \Cref{lem:fiber_int_continu_no_singularity}, we have 
    \[
    \int_{X_{s_n}} f_{s_n} (1 - \eta_\epsilon) \alpha \to
    \int_{X_0} f(1 - \eta_\epsilon) \alpha,
    \quad
    \text{as } n \to \infty.
    \]
    Therefore, we have 
    \[
    \limsup_{n \to \infty} \abs{\int_{X_{s_n}} f_{s_n}\alpha
    - \int_{X_0} f \alpha}
    \leq \limsup_{n \to \infty} \abs{\int_{X_{s_n}}f_{s_n}\eta_\epsilon \alpha}
    + \abs{\int_{X_0} f \eta_\epsilon \alpha}.
    \]

    For the first term, we have 
    \[
    \abs{\int_{X_{s_n}}f_{s_n}\eta_\epsilon \alpha}
    \leq \norm{\frac{\alpha}{\omega^q}}_{L^\infty}
    \int_{B(\Sing X_0, \epsilon)} \abs{f_{s_n}} \omega^q.
    \]
    So 
    \[
    \limsup_{n\to \infty} \abs{\int_{X_{s_n}}f_{s_n}\eta_\epsilon \alpha}
    \leq  \norm{\frac{\alpha}{\omega^q}}_{L^\infty} I(\eps).
    \]
    We then have 
    \begin{equation}
        \label{eqn:temp_lem_fib_int}
        \limsup_{n \to \infty} \abs{\int_{X_{s_n}} f_{s_n}\alpha
        - \int_{X_0} f \alpha}
        \leq \norm{\frac{\alpha}{\omega^q}}_{L^\infty} I(\eps)
        + \abs{\int_{X_0} f \eta_\epsilon \alpha}.
    \end{equation}

Note that
    \[
    \abs{\int_{X_0} f \eta_\epsilon \alpha} 
    \leq \norm{\frac{\alpha}{\omega^q}}_{L^\infty} \int_{B(\Sing X_0, \epsilon) \cap X_0} \abs{f} \omega^q
    \omega^q
    \]
    and $\vol(B(\Sing X_0, \epsilon) \cap X_0) \to 0$ as $\epsilon \to 0$
    by \Cref{lem:uniform_tubular_volume}. So $\abs{\int_{X_0} f \eta_\epsilon \alpha}
    \to 0$ as $\epsilon \to 0$ by $L^1$-integrability of $f \alpha$.

    Letting $\epsilon \to 0$ in \Cref{eqn:temp_lem_fib_int}, we obtain 
    \[\limsup_{n \to \infty} \abs{\int_{X_{s_n}} f_{s_n}\alpha
    - \int_{X_0} f \alpha} = 0.\]

    We then conclude the result.
\end{proof}

We shall see that $I(\epsilon) \to 0$ is natural under certain 
uniform integrability conditions.
\begin{lemma}
\label[lemma]{lem:fib_integral_converge_with_integrability}
Suppose $f_{s_n}$
converges to $f$ in $C^0_{\rm loc}$ on $X_{0,\rm reg}$ and $X_0$
is reduced. Let $\omega$ be a \Kahler metric on $X$.
If there is $1 < r \leq \infty$ and a uniform constant $L < \infty$ such that $\norm{f_{s_n}}_{L^r(X_{s_n}, \omega_{s_n})} < L$, then for any smooth $(q, q)$ form $\alpha$, we have
\[
\int_{X_{s_n}} f_{s_n} \alpha \to
\int_{X_0} f \alpha,
\quad
\text{as } n \to \infty.
\]
\end{lemma}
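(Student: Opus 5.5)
The plan is to verify the two hypotheses of \Cref{lem:fib_inte_continuity_full}: that $I(\epsilon) \to 0$ as $\epsilon \to 0$, and that $f$ is $L^1$-integrable on $X_0$. The conclusion then follows directly from that lemma.

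\emph{Integrability of $f$.} Since $r > 1$, we have $r < \infty$ or $r = \infty$.
\begin{itemize}
\item If $r<\infty$, apply \Cref{lem:central_fib_integrability} with $l = r$. This gives $\int_{X_{0,\rm reg}} \abs{f}^r \omega^q \leq \limsup_n \int_{X_{s_n}} \abs{f_{s_n}}^r \omega^q \leq L^r$.
\item If $r=\infty$, the $C^0_{\rm loc}$ convergence gives $\abs{f} \leq L$ pointwise on $X_{0,\rm reg}$.
\end{itemize}
In either case $f \in L^r(X_0)$. Since $X_0$ has finite volume ($\int_{X_0}\omega^q = \int_{X_s}\omega^q$ is a cohomological constant), Hölder's inequality gives $f \in L^1(X_0)$. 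Here $\Sing X_0$ has measure zero on the reduced fiber, so integrating over $X_{0,\rm reg}$ is the same as integrating over $X_0$.

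\emph{Smallness near the singular set.} Let $r'$ be the conjugate exponent of $r$. Hölder's inequality on each fiber gives
\[
\int_{B(\Sing X_0,\epsilon)\cap X_{s_n}} \abs{f_{s_n}}\,\omega^q \leq L\,\vol_{\omega}\bigl(B(\Sing X_0,\epsilon)\cap X_{s_n}\bigr)^{1/r'}.
\]
So it suffices to show
\[
\limsup_{s\to 0} \vol_\omega\bigl(B(\Sing X_0,\epsilon)\cap X_s\bigr) \to 0 \quad \text{as } \epsilon \to 0,
\]
with the volume bound uniform in $s$. I expect this to be exactly the content of \Cref{lem:uniform_tubular_volume}, which is already invoked in the proof of \Cref{lem:fib_inte_continuity_full}, presumably stated uniformly over nearby fibers.

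\emph{Main obstacle.} If \Cref{lem:uniform_tubular_volume} only covers $X_0$, the uniform statement needs proof, and this is the main step. I would first reduce to a small tubular neighborhood: $\Sing X_0$ is a finite set of points, since $X$ is a surface and $X_0$ is reduced, and the lower-dimensional case is analogous.
\begin{itemize}
\item Near each singular point, Wirtinger's theorem and the monotonicity formula control the volume. More concretely, I would bound $\vol(B(p,\epsilon)\cap X_s)$ by the mass of the current $[X_s]$ on a fixed ball.
\item The currents $[X_s]$ converge weakly to $[X_0]$ as $s\to 0$, because $X_0$ is reduced and $[X_s] = \ddc\log\abs{\pi - s}^2$.
\item The limit current puts no mass on points, so $\limsup_s \|[X_s]\|(\overline{B(p,\epsilon)}) \leq \|[X_0]\|(\overline{B(p,\epsilon)})$, which tends to $0$ as $\epsilon\to 0$.
\end{itemize}
This upper semicontinuity of mass on compact sets under weak convergence of positive currents gives $I(\epsilon)\to 0$, and \Cref{lem:fib_inte_continuity_full} concludes.
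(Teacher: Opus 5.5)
Your proposal is correct and follows the paper's proof. You reduce to \Cref{lem:fib_inte_continuity_full}, get $f\in L^1(X_0)$ from \Cref{lem:central_fib_integrability} plus finite volume, and bound $I(\epsilon)$ by Hölder together with \Cref{lem:uniform_tubular_volume}. That lemma is indeed stated uniformly over nearby fibers (including $s=0$), so your current-theoretic fallback is not needed. Your bookkeeping is also slightly more careful than the paper's. You keep the exponent $1/r'$ on the volume factor, which the paper drops. You also treat $r=\infty$ separately, where \Cref{lem:central_fib_integrability}, stated only for finite exponents, does not directly apply.
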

\begin{proof}
    By \Cref{lem:fib_inte_continuity_full}, it suffices to check 
    the following two conditions
    \begin{itemize}
        \item The contribution of fiber integral around singularities
        $$I(\epsilon) = \limsup_{n\to \infty} \int_{B(\Sing X_0, \epsilon) \cap X_s}
\abs{f_{s_n}} \omega^q$$ goes to zero as $\epsilon \to 0$.
        \item
        The $f$ is $L^1$-integrable on $X_0$.
    \end{itemize}

    The first condition is justified by the Hölder inequality
    and the volume estimate of tubular neighborhood in  \Cref{lem:uniform_tubular_volume}
    \begin{align*}
        \limsup_{n\to \infty} \int_{B(\Sing X_0, \epsilon) \cap X_s}
        \abs{f_{s_n}} \omega^q
        &\leq \limsup_{n\to \infty}
        \left( \int_{X_{s_n}} \abs{f_{s_n}}^r \omega^q \right)^{1/r} 
        \left( \int_{B(\Sing X_0, \epsilon) \cap X_{s_n}}\omega^q \right)^{1/p}\\
        &\leq L \limsup_{n\to \infty} \vol(B(\Sing X_0, \epsilon) \cap X_{s_n})\\
        &\leq L C \epsilon^2 \quad (\text{by \Cref{lem:uniform_tubular_volume}}).
    \end{align*}

    The second condition follows from \Cref{lem:central_fib_integrability}, 
    we have
    \[
    \norm{f}_{L^r(X_0)} \leq \limsup_{n \to \infty} \norm{f_{s_n}}_{L^r}
    < L
    \]
    for $1 < r \leq \infty$. So $f \in L^1(X_0)$.
\end{proof}

We prove the uniform bound for volume of tubular neighborhoods
of singular set, which is used in previous two lemmas and is 
standard:
\begin{lemma}[Volume bound for singular tubular neighborhoods]
    \label[lemma]{lem:uniform_tubular_volume}
    Let $\pi \colon X \to \D$ be a proper surjective holomorphic map 
    from a complex manifold $X$ of complex dimension 
    $p = q + 1$ to the unit disk.
    Suppose that $\pi$ has connected fiber and $X_0 \coloneqq \pi^{-1}(0)$ 
    is the unique singular fiber. Suppose $X_0$
    is reduced. Then $\Sing X_0$ is of complex codimension $\geq 1$
    in $X_0$.

    Let $\omega$ be a Kähler metric on $X$, and denote by $B(\Sing X_0, \epsilon)$ the $\epsilon$-tubular neighborhood of $\Sing X_0$ in $X$ with respect to the distance induced by $\omega$. Then there exist constants $C > 0$ and $\epsilon_0 > 0$ such that for all $s \in \D$ near $0$ (including $s = 0$) and all $0 < \epsilon < \epsilon_0$, we have
    \[
        \int_{B(S, \epsilon) \cap X_s} \omega^q \leq C \epsilon^{2}.
    \]
\end{lemma}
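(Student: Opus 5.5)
First I check the codimension claim. Since $X_0$ is reduced, the divisor of $\pi$ is $\sum_i C_i$ with every multiplicity equal to $1$. At a general point of each $C_i$ (away from the other components and from $\Sing C_i$), $\pi$ vanishes to order exactly one along $C_i$, so $d\pi \neq 0$ there. Hence $\Sing X_0 = \{d\pi = 0\}\cap X_0$ is a proper analytic subset of each component. It therefore has complex dimension $\le q-1$, i.e. complex codimension $\ge 1$ in $X_0$. Let $S \coloneqq \Sing X_0$.

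For the volume bound I will use Wirtinger's inequality and reduce to a local computation. On $X_s$ the measure $\omega^q/q!$ is the $2q$-dimensional Hausdorff measure, so it is enough to bound the Hausdorff measure of $B(S,\epsilon)\cap X_s$. Cover the compact analytic set $S$ by finitely many coordinate balls. In each ball I choose local holomorphic coordinates $z=(z',z'')$ with $z'\in\C^{q-1}$, $z''\in \C^2$, such that the projection $z\mapsto z'$ restricted to $S$ is finite. A generic linear projection does this by Noether normalization, since $\dim S\le q-1$. The Euclidean and $\omega$-distances are comparable there.

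The main step is the local estimate. I want $\mathcal H^{2q}\bigl(X_s\cap B(S,\epsilon)\cap U\bigr)\le C\epsilon^2$ uniformly in $s$. First I would choose the projection so that, in addition, $p\colon X_s\cap U \to \C^{q}$, $(z',z'')\mapsto (z', \ell(z''))$, is finite of degree bounded independently of $s$. Here $\ell$ is a generic linear form. This is possible because the hypersurfaces $X_s=\{\pi=s\}$ form a flat family, so a projection generic for $X_0$ stays finite with uniformly bounded fiber cardinality for $|s|$ small, by properness and semicontinuity of fiber dimension. Next, $B(S,\epsilon)$ projects into the set $\{(z',w): \dist((z',w),p(S))\lesssim\epsilon\}$. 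Since $p(S)$ has complex dimension $\le q-1$ in $\C^q$, this set has $2q$-volume $\le C\epsilon^2$; this is the standard tube estimate for analytic subsets of real codimension $2$, which follows from Lelong-number and Crofton-type bounds. Finally, the area formula for $p$ restricted to $X_s$ bounds the volume by the projected volume times the degree. The Jacobian factor is controlled by Wirtinger's inequality, applied to the projection of the pullback of the Euclidean Kähler form.

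I expect the main obstacle to be this uniformity in $s$: controlling the Jacobian and the degree of the projection near $S$ simultaneously for all fibers $X_s$ as they degenerate. If the projection argument gets messy, the fallback is a Crofton/Lelong argument: bound $\int_{X_s\cap B(x,r)}\omega^q\le Cr^{2q}$ uniformly in $s$ and $x$. This follows from monotonicity of the Lelong numbers of the currents $[X_s]=\ddc\log|\pi-s|^2$, whose mass on a fixed ball is locally bounded uniformly in $s$. Then cover $S$ by $N(\epsilon)\le C\epsilon^{-(2q-2)}$ balls of radius $2\epsilon$, using that the real dimension of $S$ is $\le 2q-2$ and that analytic sets have finite Minkowski content. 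Summing gives $C\epsilon^{-(2q-2)}\epsilon^{2q}=C\epsilon^2$, and the constants are uniform for $s$ near $0$ and $\epsilon<\epsilon_0$.
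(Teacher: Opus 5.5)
Your fallback argument is correct and is essentially the paper's proof. It covers $\Sing X_0$ by $N(\epsilon)\le C\epsilon^{-2(q-1)}$ balls of radius comparable to $\epsilon$. It then bounds the mass of $X_s$ in each ball by $C\epsilon^{2q}$, uniformly in $s$, via monotonicity plus a uniform mass bound on a fixed ball, and multiplies. The paper gets the mass bound from the $s$-independence of $\int_{X_s}\omega^q$; your local mass bound for $[X_s]$ does the same job.

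Your version is more careful than the paper's in two respects. First, Lelong's monotonicity for the closed positive current $[X_s]$ holds for every center. This matters because for $s\neq 0$ the centers $x_i\in\Sing X_0$ do not lie on $X_s$. Second, the covering bound genuinely needs finite Minkowski content, which you invoke, rather than finite Hausdorff measure, which the paper invokes.

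Your primary projection route, however, has a genuine gap at its last step. The area formula for $p|_{X_s}$ gives
\[
\int_{X_s\cap A} J_p \, d\mathcal{H}^{2q} = \int_{\C^q} \#\bigl(p^{-1}(y)\cap X_s\cap A\bigr)\,dy \le \deg\bigl(p|_{X_s}\bigr)\cdot \vol\bigl(p(A)\bigr),
\]
so bounding $\mathcal{H}^{2q}(X_s\cap A)$ requires a \emph{lower} bound on $J_p$. Wirtinger's inequality gives the opposite bound: for a complex-linear orthogonal projection restricted to a complex $q$-plane, $J_p=\lvert\det\rvert^2\le 1$.

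No uniform lower bound holds near $\Sing X_0$. Take $q=1$, the node $\{zw=s\}$ and $p=(z+w)/\sqrt{2}$. The tangent vector to $X_s$ is $(z,-w)$, so $J_p=\lvert z-w\rvert^2/\bigl(2(\lvert z\rvert^2+\lvert w\rvert^2)\bigr)$. This vanishes at the ramification points $z=w=\pm\sqrt{s}$, which lie in $B(\Sing X_0,\epsilon)$ once $\lvert s\rvert\ll\epsilon^2$. More bluntly, the line $\{z=\delta w\}$ projects with degree one onto a disc of area $O(\delta^2\epsilon^2)$, while its own area in the $\epsilon$-ball is about $\pi\epsilon^2$.

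The standard repair is to use all $q+1$ coordinate projections $p_I$ at once. On any complex $q$-dimensional submanifold, $\omega_{\mathrm{eucl}}^q/q!=\sum_{\lvert I\rvert=q}p_I^*(dV_{\C^q})$ with every summand nonnegative. Hence $\vol(X_s\cap A)\le\sum_I \deg(p_I|_{X_s})\,\vol(p_I(A))$. This needs generic linear coordinates in which every $p_I$ is finite on $X_s\cap U$ with degree bounded uniformly in $s$; your Weierstrass/flatness argument supplies this for each $I$. With that change the projection route also works, but it is heavier than the covering-plus-monotonicity argument, which is the one to write up.
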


\begin{proof}
    
    Since $\Sing X_0$ is a compact complex analytic
    subvariety of $X_0$ of complex dimension less than $q - 1$, 
    its $2(q - 1)$-dimensional Hausdorff measure is finite. 
    Thus, there exists a constant $C_1 > 0$ such that for any sufficiently 
    small $\epsilon > 0$, the compact set $\Sing X_0$ can be covered by a finite 
    number of balls $N(\epsilon)$ of radius $\epsilon$ centered at points 
    $x_i \in \Sing X_0$, where 
    \[
        N(\epsilon) \leq C_1 \epsilon^{-2(q - 1)}.
    \]
    The $\epsilon$-tubular neighborhood $B(\Sing X_0, \epsilon)$ is strictly contained in the union of the concentric balls $B(x_i, 2\epsilon)$.

    Next, we bound the volume of $X_s$ within these local balls. For any $s$ (including $s=0$), $X_s$ is a purely $q$-dimensional complex analytic cycle. Because $\omega$ is a Kähler metric on $X$, the standard Bishop-Lelong theorem (or monotonicity formulas) guarantees that the volume ratio is monotonically increasing. Thus, we take $R_0 > 2 \epsilon_0$ such that for all $x \in S$ and all $2\epsilon < 2 \epsilon_0 < R_0$, we have
    \[
        \frac{1}{(2\epsilon)^{2q}} \int_{B(x, 2\epsilon) \cap X_s} \omega^q \leq \frac{1}{R_0^{2q}} \int_{B(x, R_0) \cap X_s} \omega^q.
    \]
    Furthermore, because $d\omega = 0$ and the fibers $X_s$ are homologous in $X$, the total volume of the fibers $V = \int_{X_s} \omega^q$ is a topological constant independent of $s$. We have a uniform constant
    $C_2$ and the following inequality
    \[
        \int_{B(x, 2\epsilon) \cap X_s} \omega^q \leq \left( \frac{V}{R_0^{2q}} \right) (2\epsilon)^{2q} \coloneqq C_2 \epsilon^{2q}.
    \]

    Finally, combining the covering estimate and the local volume bound, we obtain, by the subadditivity of the volume,
    \[
        \int_{B(\Sing X_0, \epsilon) \cap X_s} \omega^q \leq \sum_{i=1}^{N(\epsilon)} \int_{B(x_i, 2\epsilon) \cap X_s} \omega^q \leq N(\epsilon) \cdot C_2 \epsilon^{2q} \leq C_1 C_2 \epsilon^{2}.
    \]
    Setting $C = C_1 C_2$, the bound holds uniformly for all $s$ near $0$, completing the proof.
\end{proof}

Finally, we give a sufficient condition on the existence of limit
of convergence on the singular fiber:
\begin{proposition}
    \label[proposition]{prop:central_convergence_compactness}
    We work under the above settings and assume that $X_0$ is reduced.
    Let $\vphi$ be a real
    function on $X \setminus X_0$. We assume that 
    $\vphi|_{X_s}$ is smooth on all regular fibers $X_s$
    for $s \neq 0$, and define a family of smooth $(1, 1)$
    forms $\beta_s \coloneqq \ddc (\vphi|_{X_s})$ on each $X_s$
    for $s \neq 0$. 
    
    Let $\omega$ be a \Kahler form on $X$. If 
    the following conditions hold:
    \begin{itemize}
        \item Around any point of $X_{0, \rm reg}$, we have an open
        neighborhood $U$ such that the $L^\infty$-norms $\norm{\tr_{\omega_s} \beta_s}_{L^\infty(X_s \cap U)}$ are uniformly bounded for $\abs{s} \ll 1$.
        \item The family of functions $f_{s} \coloneqq \tr_{\omega_{s}} \beta_{s}$ converges in $C^0_{\rm loc}$ to a continuous function $f_0$ on $X_{0,\rm reg}$.
        \item The $L^2$-norms $\norm{\vphi_s}_{L^2(X_s, \omega_s)}$
        and $\norm{\di \vphi_s}_{L^2(X_s,\omega_s)}$ are uniformly 
        bounded for $\abs{s} \ll 1$.
    \end{itemize}
    Then for any sequence $(s_n)_{n \geq 1} \subset \D^\circ$ such that  
    $s_n \to 0$ and any $0 < a < 1$, there exists a subsequence of $s_n$ 
    (by abusing the notation, we denote the subsequence by $s_n$ itself) 
    such that
    \begin{itemize}
        \item $\vphi_{s_n}$ converges
        in $C^{1, a}_{\rm loc}$
        to a $C^{1, a}$ function $\vphi_0$
        on $X_{0,\rm reg}$,
        \item $\vphi_0 \in W^{2, p}_{\rm loc}(X_{0, \rm reg})$ for any $1 < p < \infty$, and $\vphi_0$ solves $-\Delta_{\omega_0} \vphi_0 = f_0$ almost everywhere on $X_{0,\rm reg}$,
        \item $\vphi_0 \in W^{1, 2}(X_{0, \rm reg})$.
        \item If $\alpha$ is a smooth $(q, q)$-form, then 
        \[
        \int_{X_{s_n}} \vphi_{s_n} \alpha \to \int_{X_0} \vphi_0 \alpha
        \]
    \end{itemize}

    Furthermore, for two different limits $\vphi_0, \vphi_0'$, their
    difference $\vphi_0 - \vphi_0'$ is locally constant on 
    $X_{0, \rm reg}$.
\end{proposition}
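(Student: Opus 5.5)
We work in local trivializing charts $U \simeq V \times \D_\epsilon$ around points of $X_{0,\rm reg}$, as in \Cref{dfn:central_fib_convergence}. Transported to $V$, the functions $\tilde\vphi_s$ satisfy the elliptic equations $-\Delta_{\tilde\omega_s}\tilde\vphi_s = \tilde f_s$. Here the metrics $\tilde\omega_s$ converge smoothly to $\tilde\omega_0$ (up to a harmless normalization of $\Delta$ versus $\tr_\omega\ddc$), and the right-hand sides $\tilde f_s$ are uniformly bounded in $L^\infty$ by the first hypothesis.

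\emph{Step 1 (local estimates).} The third hypothesis gives a uniform $L^2(V')$ bound on $\tilde\vphi_s$ for every relatively compact $V' \Subset V$, because the pulled-back volume forms are uniformly comparable. Interior $L^p$ estimates for operators with uniformly smooth coefficients then give uniform $W^{2,p}(V'')$ bounds for all $p<\infty$ on smaller sets $V''$. Morrey embedding turns these into uniform $C^{1,a'}$ bounds for some $a<a'<1$.

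\emph{Step 2 (extraction of a limit).} By Arzelà–Ascoli and a diagonal argument over a countable exhaustion of $X_{0,\rm reg}$ by such charts, we extract a subsequence along which $\vphi_{s_n}\to\vphi_0$ in $C^{1,a}_{\rm loc}$. Weak compactness in $W^{2,p}$ shows $\vphi_0\in W^{2,p}_{\rm loc}$. Passing to the limit in the equation, using $f_{s_n}\to f_0$ in $C^0_{\rm loc}$ and the coefficient convergence, gives $-\Delta_{\omega_0}\vphi_0=f_0$ a.e. Applying \Cref{lem:central_fib_integrability} with $l=2$ and $r=0,1$ gives $\vphi_0\in L^2$ and $\di\vphi_0\in L^2$, so $\vphi_0\in W^{1,2}(X_{0,\rm reg})$.

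\emph{Step 3 (convergence of fiber integrals).} This is the main point requiring care. The functions $\vphi_{s_n}$ are only controlled in $L^2$ near $\Sing X_0$, so we apply \Cref{lem:fib_integral_converge_with_integrability} with $r=2>1$ and $L$ the uniform $L^2$ bound. Its hypotheses hold: we have $C^0_{\rm loc}$ convergence on $X_{0,\rm reg}$, $X_0$ is reduced, and the $L^2$ norms are uniformly bounded. Hence $\int_{X_{s_n}}\vphi_{s_n}\alpha\to\int_{X_0}\vphi_0\alpha$. We must check that the Hölder step in that lemma only uses the volume bound of \Cref{lem:uniform_tubular_volume}, which holds for all small $s$ including $s=0$; it does.

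\emph{Step 4 (uniqueness up to locally constant functions).} If $\vphi_0$ and $\vphi_0'$ are two limits along different subsequences, both solve $-\Delta_{\omega_0}\vphi = f_0$ on $X_{0,\rm reg}$, and both lie in $W^{1,2}$. Their difference $g$ is therefore a $W^{1,2}(X_{0,\rm reg})$ function with $\Delta g=0$ a.e. By \Cref{thm:harmonic_function_algebraic_variety}, applied on each component of $X_0$ (which is reduced, so $\Sing X_0$ has real codimension $\ge 2$; in general one uses the Kähler space version from the remark), we get $\di g=0$, so $g$ is locally constant.

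The expected subtlety in Step 4 is the justification of integration by parts near $\Sing X_0$. This is exactly what the Friedrichs extension framework and the $W^{1,2}$ membership from Step 2 provide.
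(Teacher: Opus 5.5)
Your proposal is correct and follows the paper's proof essentially step for step: local elliptic estimates in trivializing charts, Arzel\`a--Ascoli with a diagonal extraction, \Cref{lem:central_fib_integrability} for $W^{1,2}$, \Cref{lem:fib_integral_converge_with_integrability} with $r=2$ for the fiber integrals, and \Cref{thm:harmonic_function_algebraic_variety} for uniqueness up to locally constant functions. The one compressed point is Step 1. The interior $L^p$ estimate requires an $L^p$ bound on $\tilde\vphi_s$ on a larger set, not just an $L^2$ bound, so before claiming $W^{2,p}$ bounds for all $p<\infty$ you should insert either a local $L^\infty$ bound, as the paper does, or a finite Sobolev bootstrap starting from $W^{2,2}$.
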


\begin{proof}
    Around any point of $X_{0, \rm reg}$, we have an open neighborhood 
    $U$ such that the 
    $L^\infty$-norms $\norm{\tr_{\omega_s} \beta_s}_{L^\infty(X_s \cap U)}$
    are uniformly bounded. By shrinking $U$,
    we assume $U \simeq V \times \D_\epsilon$ that 
    trivializes the fibration
    $\pi$.
    
    Around any point of $V$, we take a relatively compact 
    coordinate ball $W$. Then $\vphi_s$ solves 
    \[
        -\Delta_{\omega_s} \vphi_s = \tr_{\omega_s} \ddc_z 
        \vphi_s = \tr_{\omega_s} \beta_s \quad \text{on } W.
    \]
    Note that $\Delta_{\omega_s}$ is a family of uniformly 
    elliptic operators for $\abs{s} \ll 1$. 
    By standard local $L^\infty$ bounds for 
    elliptic PDEs (see Gilbarg--Trudinger 
    \cite{GT-PDE}*{Theorem 8.24}), 
    we can shrink $W$ to a slightly smaller domain 
    $\tilde{W} \Subset W$ to obtain
    \begin{align*}
        \norm{\vphi_s}_{L^\infty(\tilde{W})} &\leq C 
        \left( \norm{\vphi_s}_{L^2(W)} + 
        \norm{\tr_{\omega_s} \beta_s}_{L^\infty(W)} \right) \\
        &\leq C \left( \norm{\vphi_s}_{L^2(X_s)} + 
        \norm{\tr_{\omega_s} \beta_s}_{L^\infty(X_s\cap U) } \right),
    \end{align*}
    where $C$ is a uniform constant independent of $s$. By our assumptions, the right-hand side is uniformly bounded in $s$.

    Next, by interior $L^p$ estimates for elliptic equations 
    (see \cite{GT-PDE}*{Theorem 9.11}), for any $1 < p < \infty$,
     we can further shrink $\tilde{W}$ to $W'$ to obtain
    \begin{align*}
        \norm{\vphi_s}_{W^{2, p}(W')} &\leq C_p 
        \left( \norm{\vphi_s}_{L^\infty(\tilde{W})} + 
        \norm{\tr_{\omega_s} \beta_s}_{L^p(\tilde{W})} \right) \\
        &\leq C_p' \left( \norm{\vphi_s}_{L^\infty(\tilde{W})} + 
        \norm{\tr_{\omega_s} \beta_s}_{L^\infty(X_s\cap U)} \right).
    \end{align*}
    Thus, $\norm{\vphi_s}_{W^{2, p}(W')}$ is uniformly bounded for 
    any $p > 1$. 

    By the Sobolev embedding theorem (Morrey's inequality), 
    for $p > 2q$ (where $q = \dim_\C X_s$), 
    $W^{2, p}(W')$ embeds compactly into 
    $C^{1, b}(W')$ for $0 < b < 1 - 2q/p$. 
    Since $p$ can be chosen arbitrarily large, 
    $\norm{\vphi_s}_{C^{1, b}(W')}$ is uniformly 
    bounded for any $0 < b < 1$.

    To patch these local estimates together, 
    note that $X_{0,\rm reg}$ is second-countable, 
    so we can choose a countable exhaustion by 
    compact subsets $K_1 \Subset K_2 \Subset \cdots \Subset X_{0,\rm reg}$
    such that $\cup_m K_m = X_{0,\rm reg}$. 
    For each $K_m$, we can cover it by finitely 
    many such charts $W'$. By the Ascoli-Arzel\`a 
    theorem and diagonal argument, we can 
    extract a single subsequence $(s_n)$ such that 
    $\vphi_{s_n}$ converges in $C^{1, a}_{\rm loc}$ to 
    a limit function $\vphi_0 \in C^{1, a}(X_{0,\rm reg})$ 
    for any given $a \in (0, 1)$. Furthermore, the uniform bounds in 
    $W^{2,p}_{\rm loc}$ guarantee that 
    $\vphi_0 \in W^{2,p}_{\rm loc}(X_{0,\rm reg})$ and that 
    $\vphi_{s_n} \to \vphi_0$ weakly in $W^{2,p}_{\rm loc}$.

    Since $f_{s_n} \coloneqq \tr_{\omega_{s_n}} \beta_{s_n}$ 
    converges in $C^0_{\rm loc}$ to the continuous limit $f_0$, 
    passing to the limit in the PDE implies that $\vphi_0$ 
    satisfies $-\Delta_{\omega_0} \vphi_0 = f_0$ almost 
    everywhere on $X_{0,\rm reg}$. 
    
    By the third condition and 
    \Cref{lem:central_fib_integrability}, 
    we have $\vphi_0 \in W^{1, 2}(X_{0, \rm reg})$.

    Since $\norm{\vphi_s}_{L^2(X_s, \omega_s)}$ is uniformly bounded, 
    by \Cref{lem:fib_integral_converge_with_integrability} we have 
    \[
    \int_{X_{s_n}} \vphi_{s_n} \alpha \to \int_{X_0} \vphi_0 \alpha
    \]
    for any smooth $(q, q)$-form $\alpha$ on $X$.

    Suppose $\vphi_0$ and $\vphi_0'$ are two limits arising 
    from different subsequences of $\vphi_{s_n}$. 
    Both solve the same equation 
    $-\Delta_{\omega_0} \vphi = f_0$ almost everywhere 
    on $X_{0,\rm reg}$. 
    So their difference 
    $h \coloneqq \vphi_0 - \vphi_0' \in W^{2,p}_{\rm loc}(X_{0,\rm reg})$ 
    is a weakly harmonic function on $X_{0,\rm reg}$ 
    with respect to $\omega_0$. Since $h \in W^{1, 2}(X_{0, \rm reg})$ 
    globally and the singular locus of $X_0$ is of real codimension at 
    least $2$, by \Cref{thm:harmonic_function_algebraic_variety}, 
    $h$ is locally constant.
\end{proof}

As a corollary, we have the following result on continuity of fiber 
integral
\begin{corollary}
    \label[corollary]{cor:fiber_int_continuity_central_convergence}
    We work under the assumption and notations of \Cref{prop:central_convergence_compactness}. Assume 
    that all conditions in \Cref{prop:central_convergence_compactness}
    are satisfied. 

    If $\alpha$ is a smooth $(q, q)$-form such that $\int_{C} \alpha = 0$ for each irreducible component $C$ of $X_0$, then the fiber integral $I(s) = \int_{X_s} \vphi_s \alpha$ is continuous at $s = 0$.
\end{corollary}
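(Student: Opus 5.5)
To prove continuity of $I(s) = \int_{X_s} \vphi_s \alpha$ at $s = 0$, we first have to say what $I(0)$ is, since $\vphi_0$ is only determined by \Cref{prop:central_convergence_compactness} up to a locally constant function on $X_{0,\rm reg}$. We set $I(0) \coloneqq \int_{X_0} \vphi_0 \alpha$ for any subsequential limit $\vphi_0$ and check that this value is independent of the choice.

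Let $\vphi_0, \vphi_0'$ be two limits. By \Cref{prop:central_convergence_compactness}, their difference $h = \vphi_0 - \vphi_0'$ is locally constant on $X_{0,\rm reg}$. Since $X_0$ is reduced, each irreducible component $C$ of $X_0$ has $C \cap X_{0,\rm reg}$ dense in $C$. The connected components of $X_{0,\rm reg}$ are exactly the sets $C \cap X_{0,\rm reg}$, because distinct components meet only inside $\Sing X_0$ and a point lying on a single component is regular on that component. Hence $h = \sum_C c_C \mathbf{1}_{C}$ almost everywhere on $X_0$, with respect to the measure $\omega^q$. The singular set has measure zero by \Cref{lem:uniform_tubular_volume}. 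Therefore
\[
\int_{X_0} h\,\alpha = \sum_C c_C \int_C \alpha = 0,
\]
by the hypothesis $\int_C \alpha = 0$. So $I(0)$ is well defined.

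For continuity we argue by contradiction via subsequences. Suppose $I(s)$ does not tend to $I(0)$. Then there are $\delta > 0$ and a sequence $s_n \to 0$ with $\abs{I(s_n) - I(0)} \geq \delta$ for all $n$. All hypotheses of \Cref{prop:central_convergence_compactness} hold, so we may pass to a subsequence along which $\vphi_{s_n}$ converges in $C^{1,a}_{\rm loc}$ on $X_{0,\rm reg}$ to some limit $\vphi_0''$. The last bullet of that proposition gives
\[
I(s_n) \to \int_{X_0} \vphi_0'' \alpha .
\]
By the independence just established, this limit equals $I(0)$, which contradicts $\abs{I(s_n) - I(0)} \geq \delta$. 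Hence $I(s) \to I(0)$, and $I$ is continuous at $0$ once $I(0)$ is defined this way.

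The only step needing care is identifying the connected components of $X_{0,\rm reg}$ with the regular parts of the irreducible components. Here "locally constant" must mean constant on each component $C$, and that is exactly what makes the hypothesis $\int_C \alpha = 0$ kill the ambiguity in $\vphi_0$. This identification follows from the standard local structure of a reduced curve (or hypersurface) in a smooth surface: $X_{0,\rm reg}$ is the complement of a proper analytic subset and $C \cap X_{0,\rm reg}$ is connected for each irreducible component $C$. Everything else is a direct application of the earlier results.
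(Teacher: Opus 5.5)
Your proof is correct and follows essentially the same route as the paper: pass to subsequential limits via \Cref{prop:central_convergence_compactness}, note that two limits differ by a function that is constant on each $C \cap X_{0,\rm reg}$, and use $\int_C \alpha = 0$ to conclude that all subsequential limits of $I(s_n)$ coincide. Your explicit definition of $I(0)$, and your remark that what is really needed is connectedness of $C \cap X_{0,\rm reg}$ (the paper only cites connectedness of $C$), make the argument slightly tighter. One wording fix: a single component can itself be singular (e.g.\ a nodal rational curve), but such points lie in $\Sing X_0$ and are excluded, so your conclusion still holds.
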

\begin{proof}
    We prove it by contradiction. Otherwise, 
    we would have two sequences $\{s_n\}$ and 
    $\{s_n'\}$ with $s_n, s_n' \to 0$ such that 
    \[
    \lim_{n \to\infty} I(s_n) \neq \lim_{n \to\infty} I(s_n').
    \]

    By \Cref{prop:central_convergence_compactness}, 
    we assume that $\vphi_{s_n} \to \vphi_0$, 
    $\vphi_{s_n'} \to \vphi_0'$ in $C^{1, a}_{\rm loc}$ on 
    $X_{0, \rm reg}$, then 
    \[
    \lim_{n \to\infty} I(s_n) = \int_{X_0} \vphi_0 \alpha,
    \quad \lim_{n \to\infty} I(s_n') = \int_{X_0} \vphi_0' \alpha.
    \]

    Since $\vphi_0 - \vphi_0'$ is locally constant on 
    $X_{0, \rm reg}$ and each irreducible component $C$
    of $X_0$ is connected, $\vphi_0 - \vphi_0'$ is piecewise constant
    on irreducible components of $X_0$. Therefore,
    \[
    \int_{X_0}(\vphi_0 - \vphi_0') \alpha
    = \sum_{C} \mathrm{constant }\int_{C} \alpha = 0,
    \]
    where $C$ runs over all irreducible components of $X_0$. 
    
    Thus, $\lim_{n \to\infty} I(s_n) - \lim_{n \to\infty} I(s_n') = 0$, which is a contradiction.
\end{proof}

\subsection{Smoothness of spectral projection}
Let $\pi\colon 
\mathfrak{X} \to S$ be a proper, smooth submersion between smooth
manifolds. In Demailly's expository article \cite{demailly1996}, he proved the following:
\begin{lemma}[\cite{demailly1996}*{Lemma 10.3}]
    \label[lemma]{lem:proj_of_laplace_is_smooth}
    Let $g$ be a Riemannian metric on $\mathfrak{X}$, and let $g_s$
    be its restriction on each $X_s = \pi^{-1}(s)$. Then $-\Delta_s
    \colon C^\infty(X_s, \R) \to C^{\infty}(X_s, \R)$
    is a smooth family of elliptic operators. Furthermore, the eigenvalues
    of $-\Delta_s$ 
    \[
        0 < \lambda_0(s) \leq \lambda_1(s) \leq \cdots
    \]
    are continuous in $s$.

    Moreover, if $\lambda$ is not in the spectrum
    $\{\lambda_k(s_0)\}_{k \geq 0}$ of 
    $-\Delta_{s_0}$, the direct sum $W_{s, \lambda}
    \subset C^\infty(X_t)$ of 
    eigenspaces of $-\Delta_s$ with 
    eigenvalues $\leq \lambda$ defines a $C^\infty$
    vector bundle (i.e., varies smoothly), in a 
    neighborhood of $s_0$.
\end{lemma}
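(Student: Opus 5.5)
The plan is the standard analytic perturbation argument for self-adjoint elliptic families, run in a fixed trivialization. By Ehresmann's theorem, after shrinking $S$ around $s_0$, there is a diffeomorphism $\mathfrak{X}|_{U} \simeq X_{s_0} \times U$ compatible with $\pi$. Pulling back, $-\Delta_s$ becomes a family of second-order elliptic operators $P_s$ on the fixed manifold $M = X_{s_0}$. Their coefficients depend smoothly on $s$, because the metric $g_s$ does. Each $P_s$ is self-adjoint, but with respect to the $s$-dependent $L^2$ inner product $\langle u, v\rangle_s = \int_M uv\, \mathrm{dvol}_{g_s}$. To put everything on one Hilbert space I will conjugate by $\rho_s^{1/2}$, where $\rho_s = \mathrm{dvol}_{g_s}/\mathrm{dvol}_{g_{s_0}}$. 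This yields a smooth family of operators that are self-adjoint on $L^2(M, g_{s_0})$ and have the same spectrum as $-\Delta_s$.

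\emph{Continuity of eigenvalues.} I will use the min-max characterization
\[
\lambda_k(s) = \min_{\dim E = k+1} \max_{u \in E\setminus 0} \frac{\int \abs{du}_{g_s}^2\, \mathrm{dvol}_{g_s}}{\int u^2\, \mathrm{dvol}_{g_s}}.
\]
Since $g_s \to g_{s_0}$ in $C^0$, there are constants $(1+\epsilon(s))^{\pm 1}$ with $\epsilon(s) \to 0$ such that the numerator and denominator for $g_s$ and $g_{s_0}$ are comparable with these constants, uniformly in $u$. Hence $(1+\epsilon)^{-2}\lambda_k(s_0) \le \lambda_k(s) \le (1+\epsilon)^2 \lambda_k(s_0)$, which gives continuity. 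The paper indexes eigenvalues starting at $\lambda_0$ and writes $0 < \lambda_0$; I will just treat the ordered spectrum, including the zero eigenvalue, without change.

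\emph{Smoothness of $W_{s,\lambda}$.} Fix $\lambda \notin \operatorname{spec}(-\Delta_{s_0})$. Since the spectrum is discrete, there is $\delta > 0$ with $[\lambda-\delta, \lambda+\delta]$ disjoint from $\operatorname{spec}(P_{s_0})$. By continuity of eigenvalues, this remains true for $s$ near $s_0$. Choose a contour $\Gamma \subset \C$ that encloses $[-1, \lambda]$ and crosses $\R$ only at $-1$ and at $\lambda$. Then define the Riesz projection
\[
\Pi_s = \frac{1}{2\pi i}\oint_\Gamma (z - P_s)^{-1}\, dz .
\]
This is the spectral projection onto $W_{s,\lambda}$, so its rank is constant near $s_0$. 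The key point is that $s \mapsto (z-P_s)^{-1}$ is smooth in $s$, uniformly for $z \in \Gamma$, as a map into bounded operators $H^k \to H^{k+2}$ for every $k$. This follows from the identity
\[
\partial_s (z-P_s)^{-1} = (z-P_s)^{-1} (\partial_s P_s) (z-P_s)^{-1},
\]
together with uniform elliptic estimates $\norm{(z-P_s)^{-1}}_{H^k \to H^{k+2}} \le C$ on the compact set $\Gamma \times \overline{U'}$. So $\Pi_s$ is smooth in $s$ into the bounded operators $L^2 \to H^k$ for every $k$, hence into $C^\infty$.

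\emph{Local frame.} Pick a basis $e_1, \dots, e_m$ of $W_{s_0,\lambda}$. The sections $\Pi_s e_j$ are smooth in $s$ and are linearly independent near $s_0$, since they are close to the $e_j$. This gives a smooth local frame, and so $W_{s,\lambda}$ is a $C^\infty$ vector bundle.

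The main obstacle is the uniform resolvent bound and its smooth dependence on $s$. It requires uniform ellipticity constants of $P_s$ on compact parameter sets, plus a Neumann-series argument: for $s$ near $s_0$, write $z - P_s = (z-P_{s_0})\bigl(I - (z-P_{s_0})^{-1}(P_s - P_{s_0})\bigr)$. This is invertible because $P_s - P_{s_0}$ is small as an operator $H^{k+2} \to H^k$.
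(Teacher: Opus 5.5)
Your proposal is correct and is essentially the standard Kodaira--Spencer argument behind the cited result; the paper gives no proof of its own and only refers to Demailly's Lemma~10.3, which likewise gets continuity of the eigenvalues from the min-max principle and smoothness of $W_{s,\lambda}$ from the Cauchy-integral spectral projection $\frac{1}{2\pi i}\oint_\Gamma (z-\Delta_s)^{-1}\,dz$, using that the resolvent depends smoothly on $s$. Your additional steps supply details the reference leaves implicit: the Ehresmann trivialization, the $\rho_s^{1/2}$ conjugation to a fixed Hilbert space, the Neumann-series resolvent bound, and the remark that $\lambda_0(s)=0$ rather than $>0$.
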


As a direct corollary, we have
\begin{corollary}
    \label[corollary]{cor:projection_is_smooth}
    Let 
    $u$ be a smooth function on $\mathfrak{X}$
    and $P_{\leq \lambda}^s \colon C^{\infty}(X_s)
    \to C^\infty(X_s)$ be the orthogonal projection
    onto $W_{s, \lambda}$.

    Suppose $\lambda$ is not in the spectrum
    $\{\lambda_k(s_0)\}_{k \geq 0}$ of 
    $-\Delta_{s_0}$, then $P_{\leq \lambda}^s(u)$ and $(1 - P_{\leq \lambda}^s)(u)
    \eqqcolon P_{> \lambda}^{s}(u)$ are smooth 
    on $\pi^{-1}(U)$, where $U \subset S$ is an 
    open neighborhood of $s_0$.
\end{corollary}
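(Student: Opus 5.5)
The plan is to write the spectral projection explicitly in a local orthonormal frame of $W_{s,\lambda}$ and then invoke the smoothness of this frame supplied by \Cref{lem:proj_of_laplace_is_smooth}.

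\textbf{Step 1 (local frame).} Since $\lambda$ is not in the spectrum of $-\Delta_{s_0}$ and the eigenvalues vary continuously, there is a neighborhood $U$ of $s_0$ on which $\lambda$ stays outside the spectrum. On this $U$ the rank $m=\dim W_{s,\lambda}$ is constant. By \Cref{lem:proj_of_laplace_is_smooth}, after shrinking $U$ the spaces $W_{s,\lambda}$ form a $C^\infty$ vector bundle over $U$. Concretely, this gives smooth functions $e_1,\dots,e_m$ on $\pi^{-1}(U)$ whose restrictions to each $X_s$ span $W_{s,\lambda}$.

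\textbf{Step 2 (Gram--Schmidt).} Apply Gram--Schmidt fiberwise with respect to the $L^2(X_s,g_s)$ inner product. The Gram matrix $G_{ij}(s)=\int_{X_s} e_i e_j\, dV_{g_s}$ is a fiber integral of a smooth function along a proper submersion, so it is smooth in $s$. It is also positive definite, hence $G^{-1/2}$ is smooth. The resulting orthonormal frame $\{\tilde e_i\}$ is therefore smooth on $\pi^{-1}(U)$.

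\textbf{Step 3 (the projection).} In this frame the projection is
\[
P^s_{\le\lambda}(u)(x)=\sum_{i=1}^m \Bigl(\int_{X_s} u\,\tilde e_i\, dV_{g_s}\Bigr)\tilde e_i(x), \qquad x\in X_s .
\]
Each coefficient is again the fiber integral of a smooth function along the proper submersion $\pi$. Such fiber integrals are smooth: locally trivialize $\pi$ by Ehresmann's theorem and differentiate under the integral sign over the compact fibers. Hence $P^s_{\le\lambda}(u)$ is smooth on $\pi^{-1}(U)$. The complementary projection $P^s_{>\lambda}(u)=u-P^s_{\le\lambda}(u)$ is then smooth as well.

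\textbf{Main obstacle.} The only delicate point is the precise meaning of ``defines a $C^\infty$ vector bundle'' in \Cref{lem:proj_of_laplace_is_smooth}: we need a local frame that is jointly smooth on the total space, not just smooth in $s$ into some Sobolev space. If the lemma only yields smoothness into $L^2$ or $H^k$, I would upgrade it to joint smoothness using the eigenvalue equation. The functions in $W_{s,\lambda}$ satisfy $\prod_k(\Delta_s+\lambda_k(s))$-type relations. A cleaner route is the Riesz projector $\frac{1}{2\pi i}\oint_{|z|=\lambda'}(z+\Delta_s)^{-1}dz$ over a contour separating the small eigenvalues. Elliptic estimates for the smooth family $\Delta_s$, uniform in $s$, give smoothness of this projector in all $C^k$ topologies, hence joint smoothness.
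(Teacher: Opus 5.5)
Your proposal is correct and takes the same route as the paper, which gives no separate proof and treats the statement as an immediate consequence of \Cref{lem:proj_of_laplace_is_smooth}: $W_{s,\lambda}$ is a $C^\infty$ vector bundle near $s_0$, so projecting a smooth function onto it gives a smooth function. Your local orthonormal frame together with smooth fiber integrals is simply that implication written out, and your Riesz-projector fallback is the standard way the underlying lemma is proved, so it resolves the joint-smoothness concern you raise.
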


\section{Spectral geometry of degeneration of algebraic curves}
\label{section:2-spectral-geometry-dai-yoshikawa}
In this section, we study the spectral geometry of the degeneration of algebraic curves, closely following the recent work of Dai--Yoshikawa \cite{Dai-Yoshikawa2025}. 
In the first two subsections (\Cref{section:small-eigenvalues},
\Cref{section:thick-thin-model-functions}), we recall the tools developed in 
\cite{Dai-Yoshikawa2025}. In the last 
subsection \Cref{section:small-eigenfunctions}, we study the properties of 
eigenfunctions with small eigenvalues.

We introduce the setup for this section.
\begin{setup}
\label[setup]{setup:degeneration_with_reduced_fiber}
Let \(\pi\colon X \to S \simeq \D\) be a proper surjective holomorphic map from a 
complex surface \(X\)
to a Riemann surface isomorphic to the unit disk. We assume that \(\pi\) has connected fibers and \(X_0 = \pi^{-1}(0)\) is the unique singular fiber. We define
$\Sing(X_0) \coloneqq \{ x \in X_0 \mid d\pi_x = 0 \}$, 
$X_{0, \rm{reg}} \coloneqq X_0 \setminus \Sing(X_0)$ and 
$X^\circ \coloneqq X \setminus X_0$. 
Let $X_s \coloneqq \pi^{-1}(s)$.

Assume that $X$ is equipped with a \Kahler form $\omega$, and let the corresponding metric be $g^X$. 
Let \(g_s = g^{X}|_{X_s}\), then \((X_s, g_s)\) (\(s\neq 0 \)) is 
a compact Riemann surface with a \Kahler metric. We assume 
that $\vol(X_s, g_s) = \int_{X_s} \omega_s = 1$.

Assume that \(X_0\) is a reduced and reducible divisor of \(X\). In particular, \(\pi\) has
 only isolated critical points on \(X_0\).
\end{setup}

\subsection{Properties of small eigenvalues}
\label{section:small-eigenvalues}
For \(s\neq 0\), let \(0 < \lambda_1(s) \leq \lambda_2(s) \leq \cdots\) be the 
eigenvalues of the Laplacian (with respect to \(g_s\))
\(\square_s = \bar{\pa}^\ast\bar{\pa}\) counted with multiplicities. 
For \(s = 0\), we regard \(\square_0\) as the Friedrichs extension of the 
Laplacian on \(X_{0, \reg} = X_0\backslash\Sing X_0\) with compact support. 
Then by Yoshikawa's paper \cite{Yoshikawa1997}, \(\lambda_i(s)\) is a continuous function in \(s \in S\) for each \(i \geq 1\).

Let \(N = \sharp \{\text{irreducible components of }X_0\} = \dim \ker(\square_0)\), we have a precise control for small eigenvalues around \(0\):
\btheorem[{\cite{Dai-Yoshikawa2025}*{Theorem 0.2}}] 
\label{thm:eigenvalue_growth_rate}
    There exist constants \(C_0, C_1 > 0\) such that for all \(s \in \D^\circ\), 
    \[
        \frac{C_0}{\log(\abs{s}^{-1})} \leq \lambda_1(s) \leq \cdots \leq \lambda_{N-1}(s) \leq \frac{C_1}{\log(\abs{s}^{-1})}
    \]
\etheorem
\vone
In contrast to the first $N-1$ small eigenvalues, the remaining eigenvalues have a lower bound as $s$ approaches $0$.
\btheorem[{\cite{Dai-Yoshikawa2025}*{Theorem 3.8}}]
\label{thm:high_eigenvalues_lowe_bound}
For all $k \geq 1$, the function $\lambda_k(s)$ on $S^\circ$
extends to a continuous function on $S$. Moreover, for all $k \geq N$, 
$\lambda_k(s) \geq \lambda$ for a positive number $\lambda > 0$.
\etheorem

We call $\lambda_1(s), \cdots, \lambda_{N-1}(s)$ \emph{small 
eigenvalues}. Using \Cref{cor:projection_is_smooth} and the spectral 
gap of eigenvalue $\lambda_{N}(s)$ in \Cref{thm:high_eigenvalues_lowe_bound},
we define the spectral projection onto the direct sum of
eigenspaces with small eigenvalues.
\begin{lemma}
    \label[lemma]{lem:proj_onto_small_freq_is_smooth}
    We take a constant $0 < \delta_0 < 1$ depending
    only on the geometry of the degeneration and work on $\D_{\delta_0}^\circ$.

    For $s \in \D_{\delta_0}^\circ$, 
    let $W_{s, \rm low} \subset 
    C^{\infty}(X_s)$ be the space spanned by eigenfunctions
    with small eigenvalues 
    $\lambda_1(s), \cdots, \lambda_{N-1}(s)$. Let 
    $P_{\rm low}^s \colon C^{\infty}(X_s) \to 
    W_{s, \rm low}$ be the orthogonal projection onto 
    $W_{s, \rm low}$. 
    
    Then for any smooth function $u$ on $X \setminus X_0$,
    its low frequency part $P_{\rm low}^s(u)$
    is smooth on $\pi^{-1}(\D_{\delta_0}^\circ)$. Similarly,
    its high frequency part $(1 - P_{\rm low}^s)(u)$ is 
    also smooth on $\pi^{-1}(\D_{\delta_0}^\circ)$. 
\end{lemma}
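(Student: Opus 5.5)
The plan is to reduce the statement to \Cref{cor:projection_is_smooth}, applied at each base point $s_0 \in \D_{\delta_0}^\circ$ with a threshold $\lambda$ that separates the small eigenvalues from the rest. Smoothness is a local property on the base. So it suffices to show that for each $s_0 \in \D_{\delta_0}^\circ$ there is a neighborhood $U \ni s_0$ and a number $\mu$ with $W_{s,\rm low} = W_{s,\mu}$ for all $s \in U$ (notation of \Cref{lem:proj_of_laplace_is_smooth}) and $\mu \notin \operatorname{Spec}(-\Delta_{s_0})$. Then $P^s_{\rm low} = P^s_{\leq \mu}$ on $\pi^{-1}(U)$, and \Cref{cor:projection_is_smooth} gives smoothness of $P^s_{\rm low}(u)$ there. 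Since $\pi^{-1}(\D_{\delta_0}^\circ) \subset X\setminus X_0$ is a smooth proper submersion, the corollary applies. It also applies to $u$, which is smooth on $X \setminus X_0$: we restrict to $\pi^{-1}(U)$ with $U$ relatively compact in $\D^\circ$. The high frequency part $u - P^s_{\rm low}(u)$ is then smooth as a difference of smooth functions. One point to be careful about: the paper's $\square_s = \bar\pa^*\bar\pa$ differs from $-\Delta_s$ by a constant factor on a Kähler curve, so the eigenspaces agree and thresholds just rescale.

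The choice of $\delta_0$ and the threshold works as follows. By \Cref{thm:eigenvalue_growth_rate}, $\lambda_{N-1}(s) \leq C_1/\log(|s|^{-1})$. By \Cref{thm:high_eigenvalues_lowe_bound}, $\lambda_N(s) \geq \lambda > 0$ for all $s$. Choose $\delta_0 \in (0,1)$ so that $C_1/\log(\delta_0^{-1}) < \lambda/2$. Then for every $s \in \D^\circ_{\delta_0}$,
\[
\lambda_{N-1}(s) < \lambda/2 < \lambda \leq \lambda_N(s).
\]
Hence $W_{s,\rm low}$ equals the sum of eigenspaces with eigenvalue $\leq \mu$ for the fixed value $\mu = \lambda/2$ (rescaled to the normalization of $-\Delta_s$). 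This $\mu$ is not an eigenvalue of $-\Delta_{s_0}$ for any $s_0 \in \D_{\delta_0}^\circ$, because the spectrum has a gap around $\mu$. One more check is needed: the zero eigenvalue (the constants, which form the kernel) must be treated consistently. If $W_{s,\mu}$ includes the constants, then $P^s_{\rm low} = P^s_{\leq\mu} - P^s_0$. Here $P^s_0(u) = \int_{X_s} u\,\omega_s$ (using $\vol(X_s)=1$), which is a fiber integral over a smooth submersion and is smooth in $s$.

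The only real subtlety, and so the main obstacle, is obtaining the uniform gap on the whole punctured disk rather than just near $0$. This requires combining the upper bound on $\lambda_{N-1}$ with the uniform lower bound on $\lambda_N$, so that a single threshold works. Once that threshold is fixed, the rest follows directly from \Cref{lem:proj_of_laplace_is_smooth} and \Cref{cor:projection_is_smooth}.
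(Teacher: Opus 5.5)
Your proposal is correct and follows the paper's proof. You choose $\delta_0$ so that $C_1/\log(|s|^{-1}) \le \lambda/2$ on the punctured disk, use the resulting uniform gap $\lambda_{N-1}(s) < \mu < \lambda_N(s)$ to identify $P^s_{\rm low}$ with a fixed-threshold spectral projection, and then invoke \Cref{cor:projection_is_smooth}; the paper does the same with the threshold $\frac{3}{4}\lambda$ in place of your $\lambda/2$. Your subtraction of the projection onto constants, $P^s_{\rm low} = P^s_{\leq\mu} - P^s_0$ with $P^s_0(u) = \int_{X_s} u\,\omega_s$ smooth as a fiber integral, handles the zero eigenvalue, which the paper glosses over when it writes $P^s_{\rm low} = P^s_{\le \frac{3}{4}\lambda}$.
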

\begin{proof}(of \Cref{lem:proj_onto_small_freq_is_smooth})
    Using the notation in \Cref{thm:eigenvalue_growth_rate}
    and \Cref{thm:high_eigenvalues_lowe_bound}, we define 
    $\delta_0$ by
    \[
    \D_{\delta_0}^\circ = \{s\in S^\circ \colon 
    \frac{C_1}{\log(\abs{s}^{-1})} \leq \frac{1}{2}
    \lambda\}
    \]
    On the punctured disk $\D_{\delta_0}^\circ$, we have 
    \[
    \lambda_{N-1}(s) \leq \frac{C_1}{\log(\abs{s}^{-1})}
    \leq \frac{1}{2}
    \lambda < \lambda_N(s).
    \]

    For $s \in \D_{\delta_0}^\circ$, we have $P_{\rm low}^s
    = P^s_{\leq \frac{3}{4}\lambda}$. By \Cref{cor:projection_is_smooth},
    we know $P^s_{\leq \frac{3}{4}\lambda}(u)$ is smooth 
    on $\pi^{-1}(\D_{\delta_0}^\circ)$.
\end{proof}
Throughout this paper, we  
denote $P_{\rm low}^s(u)$ by $u_{\rm low}$ and 
$(1 - P_{\rm low}^s)(u)$ by $u_{\rm high}$.

\subsection{Thick-thin decomposition of nearby fibers}
\label{section:thick-thin-model-functions}
From a geometric perspective, each nearby smooth fiber $X_s$ ($s \neq 0$) admits a thick-thin decomposition. As the family degenerates, the thick part of $X_s$ converges to the regular locus of the singular fiber, $X_0 \setminus \text{Sing}(X_0)$, while the thin part collapses to the singular points $\text{Sing}(X_0)$. Since $X_0 \setminus \text{Sing}(X_0)$ consists of $N$ connected components, the thick part of $X_s$ correspondingly decomposes into $N$ disjoint regions. 

Following \cite{Dai-Yoshikawa2025}*{Section 6}, for each $1 \leq i \leq N$, we construct a \emph{model function} $\Upsilon^{(i)}_s \in C^\infty(X_s)$ in \Cref{prop:good_test_on_X_s} that serves as a smooth approximation of the characteristic function on the $i$-th thick component. 

We construct these \emph{model functions} in the following three
steps:
\vone

\noindent \hypertarget{step1}{\textbf{Step 1:}}   We construct a family of diffeomorphisms which sends
the smooth part of the singular fiber to nearby smooth fibers. 
The idea is to pick a \(C^\infty\) complex vector field \(v\) on \(X \backslash \mathrm{Crit}(f)\) satisfying \(f_* v = \pa/\pa s\), and use this vector field to flow points on the singular fiber to nearby smooth fibers. 

We collect useful results in \cite{Dai-Yoshikawa2025}*{Section 6} and omit the construction. Then we have 
the following theorem
\begin{theorem}
\label{thm:dai-yoshikawa-package}
For $s \in \D$ and $\abs{s} \ll 1$. Let $\nu \in \Z_{\geq 1}$
be a positive integer depending on the degeneration. Let $\epsilon(s) = 2 \abs{s}^{\frac{1}{8\nu}}$, we have a family
of diffeomorphisms 
\begin{align*}
    F_s \colon 
    X_0 \setminus \cup_{p \in \Sing X_0} B(p, \epsilon(s)) 
    &\to F_s(X_0 \setminus \cup_{p \in \Sing X_0} B(p, \epsilon(s))) (\subset X_s)\\
    z & \mapsto F_s(z) \in X_s
\end{align*}
The family of diffeomorphisms enjoys the following properties:
\begin{enumerate}
    \item \cite{Dai-Yoshikawa2025}*{Equation (6.2)} When $s = 0$, then $F_0 \colon 
    X_0 \to X_0$ is the identity map.
    \item \cite{Dai-Yoshikawa2025}*{Equation (6.2)} The family $F_s$ is smooth in $s$.
    \item \cite{Dai-Yoshikawa2025}*{Equation (6.3)} For $z \in X_0$
    in the domain of $F_s$, we have 
    \[
        \dist(F_s(z),  z) \leq K \abs{s}^{\frac{3}{4}} 
    \]
    \item \cite{Dai-Yoshikawa2025}*{Lemma 6.4} Let $\omega$ be the \Kahler form in our settings, then 
    \[
        \norm{(F_s)^\ast \omega_s - \omega_0}
        _{L^\infty(X_0 \setminus \cup_{p \in \Sing X_0} B(p, \epsilon(s)) )} 
        \leq K_2 \abs{s}^{\frac{1}{2}}.
    \]
    \item \cite{Dai-Yoshikawa2025}*{Lemma 6.5} For any two functions $\chi, \chi' \in C^\infty_0(X_0 \setminus \cup_{p \in \Sing X_0} B(p, \epsilon(s)))$, we have 
    \begin{align*}
        \abs{((F_s)_\ast \chi, (F_s)_\ast \chi')_{L^2(X_s)}  -(\chi, \chi ')_{L^2(X_0)}} &\leq 
        K_3 \abs{s}^{\frac{1}{2}} \norm{\chi}_{L^2(X_0)}
        \norm{\chi'}_{L^2(X_0)},\\
        \abs{\norm{\di\bigl((F_s)_\ast \chi\bigr)}_{L^2(X_s)}
        - \norm{\di \chi}_{L^2(X_0)}} 
        &\leq K_3 \abs{s}^{\frac{1}{2}} \norm{\di \chi}_{L^2(X_0)}.
    \end{align*}
\end{enumerate}
\begin{remark}
    \label[remark]{rmk:of_dai_yoshikawa_package}
    The result in \Cref{thm:dai-yoshikawa-package}.(4) 
    can be generalized to any differential form on $X$ with 
    $C^{1}$ regularity. For any smooth form $\alpha$ on $X$
    with $C^1$-coefficients, we have 
    \[
        \norm{(F_s)^\ast \alpha_s - \alpha_0}
        _{L^\infty(X_0 \setminus \cup_{p \in \Sing X_0} B(p, \epsilon(s)) )} 
        \leq K_2 \abs{s}^{\frac{1}{2}}.
    \]
    The proof follows verbatim as in \cite{Dai-Yoshikawa2025}*{Lemma 6.4}. 
\end{remark}

\end{theorem}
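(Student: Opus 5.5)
We would reprove the statement by following the construction of \cite{Dai-Yoshikawa2025}*{Section 6}: lift the radial path $t\mapsto ts$ in the base to a flow on $X$ by a horizontal vector field, and control that field by a Łojasiewicz-type inequality near $\Sing X_0$.

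\textbf{Construction of the flow.} On $X\setminus \Sing X_0$ we take the horizontal lift
\[
v \coloneqq \frac{(\dbar \bar\pi)^{\sharp}}{\abs{\pa\pi}^2_{\omega}},
\]
that is, the $(1,0)$-vector field $\omega$-orthogonal to the fibers with $\pi_\ast v=\pa/\pa s$. Since $\Sing X_0$ is a finite set of isolated critical points of the holomorphic function $\pi$, the Łojasiewicz gradient inequality gives an integer $\nu\ge 1$ and a constant $c>0$ with
\[
\abs{\di \pi}(z)\ge c\,\dist(z,\Sing X_0)^{\nu}
\]
near $X_0$. Differentiating the formula for $v$ then yields
\[
\abs{v}\le C\dist^{-\nu},\qquad \abs{\nabla v}\le C\dist^{-2\nu-1},
\]
and we may absorb the extra $-1$ by enlarging $\nu$. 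For fixed $s$, let $\Phi^s_t$ be the flow of the real vector field $V_s\coloneqq 2\Re(s\,v)$ for $t\in[0,1]$, so that $\pi(\Phi^s_t(z))=ts$. We set $F_s\coloneqq\Phi^s_1$. With this definition, property (1) is immediate, property (2) follows from smooth dependence of ODE solutions on parameters, and $F_s$ is a diffeomorphism onto its image because it is a time-one flow map.

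\textbf{Existence of the flow and property (3).} Start at $z$ with $\dist(z,\Sing X_0)\ge\epsilon(s)=2\abs{s}^{1/(8\nu)}$. While the trajectory stays at distance $\ge \epsilon(s)/2$ from $\Sing X_0$, its speed satisfies
\[
\abs{V_s}\le C\abs{s}\,\epsilon(s)^{-\nu}\le C\abs{s}^{7/8}.
\]
Hence the total displacement over $t\in[0,1]$ is $\le C\abs{s}^{7/8}\ll\epsilon(s)/2$. A continuity (bootstrap) argument then shows that the trajectory never leaves this region, so the flow exists up to time $1$, and
\[
\dist(F_s(z),z)\le K\abs{s}^{3/4}.
\]

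\textbf{Property (4) and the Remark.} The main obstacle is the metric comparison, because $\nabla v$ blows up at $\Sing X_0$ and the trade-off between $\epsilon(s)$ and the blow-up rate must be tuned. We would write
\[
\frac{d}{dt}(\Phi^s_t)^\ast\omega=(\Phi^s_t)^\ast\mathcal L_{V_s}\omega
\]
and bound the Lie derivative by
\[
\abs{\mathcal L_{V_s}\omega}\le C\bigl(\abs{V_s}+\abs{\nabla V_s}\bigr)\le C\abs{s}\,\epsilon(s)^{-2\nu}\le C\abs{s}^{3/4}.
\]
Gronwall's inequality applied to $d\Phi^s_t$ shows that $d\Phi^s_t$ stays uniformly bounded, since its exponent is $C\abs{s}^{3/4}$. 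Integrating in $t$, and restricting to $X_0$ where the pullback of $\omega$ gives $\omega_0$, we get
\[
\norm{F_s^\ast\omega_s-\omega_0}_{L^\infty}\le K_2\abs{s}^{3/4}\le K_2\abs{s}^{1/2}.
\]
The same computation works verbatim for any form $\alpha$ with $C^1$ coefficients: $\abs{\mathcal L_{V_s}\alpha}$ is controlled by $\norm{\alpha}_{C^1}$ times the same factor. This gives the Remark.

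\textbf{Property (5).} This follows directly from (4). The pullbacks $F_s^\ast\omega_s$ and $\omega_0$ are uniformly equivalent, with ratio $1+O(\abs{s}^{1/2})$, on the support of $\chi$ and $\chi'$. Therefore the $L^2$ pairings differ by $O(\abs{s}^{1/2})\norm{\chi}\norm{\chi'}$ by Cauchy--Schwarz. For the Dirichlet energy we use
\[
\di\bigl((F_s)_\ast\chi\bigr)=(F_s^{-1})^\ast\di\chi,
\]
so only the induced pointwise norms on $1$-forms change. These again differ by a factor $1+O(\abs{s}^{1/2})$, which yields the second estimate after taking square roots.
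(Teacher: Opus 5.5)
Your construction is the one the paper imports from Dai--Yoshikawa's Section 6: you flow $X_0$ into $X_s$ along a lift of $\partial/\partial s$, with the \L{}ojasiewicz exponent $\nu$ controlling the blow-up of $v$ near $\Sing X_0$. Your arguments for (1)--(4) and for the Remark are fine, even with the better exponents $\abs{s}^{7/8}$ and $\abs{s}^{3/4}$.

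The gap is the Dirichlet-energy estimate in (5), which does not follow from (4). On a Riemann surface, $F_s^\ast\omega_s$ and $\omega_0$ are $2$-forms, so (4) only says that $F_s$ nearly preserves area. The Dirichlet integral $\int\abs{d\chi}_g^2\,dA_g$ on a surface depends on the conformal class of $g$, which area control does not see. Since $F_s$ is not holomorphic, $F_s^\ast g_s$ need not be close to $g_0$ even if $F_s^\ast\omega_s=\omega_0$ exactly. For example, a map with differential $\mathrm{diag}(\lambda,\lambda^{-1})$ in orthonormal frames preserves area but multiplies the energy of functions of one variable by $\lambda^{2}$ or $\lambda^{-2}$. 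So your sentence that the pointwise norms on $1$-forms ``again differ by a factor $1+O(\abs{s}^{1/2})$'' is not justified by (4). Your ambient bound $(\Phi^s_1)^\ast\omega\approx\omega$ does not help either, since area-preserving (symplectic) maps need not be near-isometries.

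What you need is $\norm{F_s^\ast g_s-g_0}_{L^\infty}\le C\abs{s}^{3/4}$ for the Riemannian metric $g$, and your method gives it in either of two ways.
\begin{itemize}
\item Run the Lie-derivative argument on the symmetric tensor $g$ instead of $\omega$. Using $(\mathcal{L}_{V_s}g)(X,Y)=g(\nabla_XV_s,Y)+g(X,\nabla_YV_s)$, you get $\abs{\mathcal{L}_{V_s}g}\le 2\abs{\nabla V_s}\le C\abs{s}^{3/4}$ along the trajectories. Then restrict to $TX_0$ as you did for $\omega$.
\item Make your Gronwall step two-sided. Since $\frac{D}{dt}(d\Phi^s_t u)=\nabla_{d\Phi^s_t u}V_s$, one has $\bigl\lvert\frac{d}{dt}\log\abs{d\Phi^s_t u}\bigr\rvert\le\abs{\nabla V_s}$. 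Hence $e^{-C\abs{s}^{3/4}}\abs{u}\le\abs{d\Phi^s_1u}\le e^{C\abs{s}^{3/4}}\abs{u}$, i.e.\ $d\Phi^s_1$ is a near-isometry.
\end{itemize}
With this, both the area form and the pointwise norms of $1$-forms change by factors $1+O(\abs{s}^{3/4})$. The second estimate in (5) then follows after taking square roots. The $L^2$ estimate in (5) does only need (4), as you wrote.
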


We estimate the volume of the subset in $X_s$
that is not in the image of $F_s$.
\begin{corollary}
    \label{cor:measure_of_unmapped}
    Let $O_s$ be the image of $F_s$ inside $X_s$. Then 
    $O_s$ is open in $X_s$ since $F_s$ is a 
    diffeomorphism. And for $\abs{s} \ll 1$, there 
    is a uniform constant $K_4 > 0$ such that
    \[
        \int_{X_s \setminus O_s} \omega_s \leq K_4 
        \abs{s}^{\frac{1}{4 \nu}}.
    \]
\end{corollary}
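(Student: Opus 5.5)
We need to bound the area of $X_s \setminus O_s$, where $O_s = F_s(X_0 \setminus \cup_{p} B(p,\epsilon(s)))$. The strategy is to compare the area of $O_s$ with the area of the domain of $F_s$ on $X_0$, using \Cref{thm:dai-yoshikawa-package}(4). We then combine this with the fact that the total fiber area is constant, $\int_{X_s}\omega_s = 1$ for all $s$, including $s=0$, since the fibers are homologous and $\omega$ is closed. Finally, the area of the removed balls $B(p,\epsilon(s))\cap X_0$ is controlled by \Cref{lem:uniform_tubular_volume}.

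\textbf{Step 1.} Write $D_s \coloneqq X_0 \setminus \cup_{p\in\Sing X_0} B(p,\epsilon(s))$. Since $F_s\colon D_s \to O_s$ is a diffeomorphism, and orientation preserving for $|s|$ small (it is isotopic to $F_0=\mathrm{id}$ by smoothness in $s$), the change of variables formula gives
\[
\int_{O_s}\omega_s = \int_{D_s} F_s^\ast\omega_s .
\]
By \Cref{thm:dai-yoshikawa-package}(4), the pointwise estimate $|F_s^\ast\omega_s - \omega_0| \le K_2|s|^{1/2}$ holds on $D_s$, the norm being measured against $\omega_0$. Integrating over $D_s$, whose $\omega_0$-area is at most $1$, yields
\[
\int_{O_s}\omega_s \ge \int_{D_s}\omega_0 - K_2|s|^{1/2}.
\]

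\textbf{Step 2.} We have $\int_{D_s}\omega_0 = 1 - \int_{X_0\cap \cup_p B(p,\epsilon(s))}\omega_0$. The set $\Sing X_0$ is finite, because $\pi$ has isolated critical points. Hence \Cref{lem:uniform_tubular_volume} with $q=1$, or directly the Bishop--Lelong monotonicity used in its proof, gives
\[
\int_{X_0\cap B(\Sing X_0,\epsilon)}\omega_0 \le C\epsilon^2 .
\]
Substituting $\epsilon(s) = 2|s|^{1/(8\nu)}$ gives a bound of $4C|s|^{1/(4\nu)}$. This is where the exponent $\frac{1}{4\nu}$ comes from.

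\textbf{Step 3.} Combining the two steps,
\[
\int_{X_s\setminus O_s}\omega_s = 1 - \int_{O_s}\omega_s \le 4C|s|^{1/(4\nu)} + K_2|s|^{1/2}.
\]
Since $\nu\ge 1$, we have $\frac{1}{4\nu}\le \frac12$, so $|s|^{1/2}\le |s|^{1/(4\nu)}$ for $|s|<1$. Taking $K_4 = 4C + K_2$ proves the corollary.

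The only point requiring care is Step 1. We must check that the $L^\infty$ bound in (4) controls the density ratio of the $(1,1)$-forms; on a curve this is immediate, because a real $(1,1)$-form is a function multiple of $\omega_0$. We also need $\epsilon(s)<\epsilon_0$ so that \Cref{lem:uniform_tubular_volume} applies, which holds for $|s|\ll 1$.
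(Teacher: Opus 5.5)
Your argument is correct and is essentially the paper's. Both proofs use $\int_{X_0}\omega_0=\int_{X_s}\omega_s$ (which needs $X_0$ reduced, as in the setup), bound the area of the excised balls on $X_0$ by $O(\epsilon(s)^2)=O(\abs{s}^{1/(4\nu)})$, and transfer to $X_s$ with an $O(\abs{s}^{1/2})$ Dai--Yoshikawa error. The only difference is that the paper compares $L^2$-norms of a cutoff $\chi_s$ supported in the domain of $F_s$ via item (5) of \Cref{thm:dai-yoshikawa-package}, whereas you integrate the pointwise estimate (4) by change of variables. Your route is slightly more direct but needs the orientation check you supply, which in fact also follows from (4) itself, since $F_s^\ast\omega_s$ stays positive for $\abs{s}$ small.
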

\begin{proof}
    Let $\chi_s \in C^\infty_0(X_0 \setminus \cup_{p \in \Sing X_0} B(p, \epsilon(s)))$ be a test function such that 
    \begin{itemize}
        \item We have $0 \leq \chi_s \leq 1$.
        \item On $X_0 \setminus \cup_{p \in \Sing X_0} B(p, 2\epsilon(s))$, $\chi_s \equiv 1$.
    \end{itemize}
    Then we have 
    \[
        0 \leq \vol(X_0) - (\chi_s, \chi_s)_{L^2(X_0)} 
        \leq \sum_{p \in \Sing X_0} K_4' (\epsilon(s))^2 = K_4'\abs{s}^{\frac{1}{4 \nu}} ,
    \]
    where $K_4'$ is a constant depending on the geometry of 
    singular fiber.

    Since $X_0$ is reduced and $\omega$ is \Kahler, we have $\vol(X_0) = \vol(X_s)$. Together with \Cref{thm:dai-yoshikawa-package}.(5), we have 
    \[
        0 \leq \vol(X_s) - ((F_s)_\ast \chi_s, (F_s)_\ast \chi_s)_{L^2(X_s)} 
        \leq K_3 \abs{s}^{\frac{1}{2}} + K_4'\abs{s}^{\frac{1}{4 \nu}}.
    \]

    Note that $0 \leq (F_s)_\ast \chi \leq 1$ and 
    $(F_s)_\ast \chi \in C^\infty_0(O_s)$, we have 
    \begin{align*}
        \int_{X_s \setminus O_s} \omega_s \leq 
        \vol(X_s) - ((F_s)_\ast \chi, (F_s)_\ast \chi)_{L^2(X_s)} \leq K_4 \abs{s}^{\frac{1}{4 \nu}}.
    \end{align*}
\end{proof}

\vone
\noindent \hypertarget{step2}{\textbf{Step 2:}}  We construct good test functions
on the singular fiber, which will be flowed using diffeomorphisms in \Cref{thm:dai-yoshikawa-package} to nearby fibers in \hyperlink{step3}{\textbf{Step 3}} for constructing
\emph{model functions}.

Let \(X_0 = C_1 + \cdots + C_N\) be the irreducible decomposition 
of the singular fiber $X_0$. 
For each singular point \(p \in \Sing X_0 \cap C_i\), we fix a local coordinate system 
\((U_p, \zeta), \zeta = (\zeta_1, \zeta_2)\in \C^2\) and \(\zeta(p) = 0\). 
We define \(r_p(z)\coloneqq\norm{\zeta(z)} = \sqrt{\abs{\zeta_1}^2 + \abs{\zeta_2}^2}\).

Then we construct a family of functions on \(X_0\).
\begin{lemma}[\cite{Dai-Yoshikawa2025}*{Lemma 6.6}]
    \label[lemma]{lem:good_test_central}
    For every \(0 < \epsilon \ll 1\), there exists a family of smooth functions \(\chi^{(i)}_\epsilon
    (1 \leq i \leq N)\), such that
    \begin{enumerate}
        \item For $1 \leq i \leq 
        N$, we have $\chi^{(i)}_\epsilon \in C^\infty_0(C_i \setminus \Sing(X_0))$
        \item We have \(0 \leq \chi^{(i)}_\epsilon \leq 1\). 
        On \(C_i \backslash \bigcup_{p \in \Sing X_0 \cap C_i} U_p\), 
        we have \(\chi^{(i)}_\epsilon = 1\).
        \item For any \(p \in \Sing X_0 \cap C_i\), 
        we have \(\chi^{(i)}_\epsilon(z) = 0\) if 
        \(r_p(z) \leq \frac{1}{2}\epsilon\) and 
        \(\chi^{(i)}_\epsilon(z) = 1\) if 
        \(r_p(z) \geq 2\sqrt{\epsilon}\).
        \item We have \(\norm{\di \chi^{(i)}_\epsilon}^2_{L^2} \leq K_4/(\log \epsilon^{-1})\) where $K_4>0$ is a constant depending only on $\pi \colon X \to S$ around $0 \in S$.
        \item For each $i$, the $\chi^{(i)}_\epsilon$ depends on 
        $\epsilon$ continuously.
    \end{enumerate}
\end{lemma}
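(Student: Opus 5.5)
The plan is a logarithmic cutoff construction near each singular point, carried out in the fixed coordinates $(U_p,\zeta)$. First fix a smooth function $\rho\colon\R\to[0,1]$ with $\rho(t)=0$ for $t\le 0$ and $\rho(t)=1$ for $t\ge 1$. For $0<\epsilon\ll 1$ define, on $C_i\cap U_p$,
\[
\theta_{p,\epsilon}(z)\coloneqq \rho\!\left(\frac{\log\bigl(2 r_p(z)/\epsilon\bigr)}{\log\bigl(4/\sqrt{\epsilon}\bigr)}\right).
\]
This vanishes exactly where $r_p\le \epsilon/2$ and equals $1$ where $r_p\ge 2\sqrt\epsilon$, because $\log(2\sqrt\epsilon\cdot 2/\epsilon)=\log(4/\sqrt\epsilon)$. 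Shrink the $U_p$ beforehand so that $\{r_p<1\}$ are pairwise disjoint relatively compact coordinate balls, and take $\epsilon$ small enough that $2\sqrt\epsilon<1$. Then set $\chi^{(i)}_\epsilon=\prod_{p\in\Sing X_0\cap C_i}\theta_{p,\epsilon}$ on $C_i$, extended by $1$ outside $\bigcup_p U_p$ and by $0$ on the other components.

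Properties (1), (2), (3) and (5) are direct from this definition. The function is smooth and compactly supported in $C_i\setminus\Sing X_0$ since it vanishes near each $p$, and it depends continuously on $\epsilon$ because $\rho$ and the logarithms do. Each point $p$ may lie on several components, and we only restrict to the branch lying in $C_i$, so the function lives on $C_i$ alone.

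The main work is (4). On the annulus $A_\epsilon=\{\epsilon/2\le r_p\le 2\sqrt\epsilon\}\cap C_i$ we have
\[
|\di\theta_{p,\epsilon}|\le \frac{\|\rho'\|_\infty}{\log(4/\sqrt\epsilon)}\,\bigl|\di\log r_p\bigr| \le \frac{C}{\log\epsilon^{-1}}\cdot\frac{|\di r_p|}{r_p}.
\]
So it suffices to show
\[
\int_{A_\epsilon}\frac{|\di r_p|^2}{r_p^2}\,\omega_0\le C\log\epsilon^{-1},
\]
since this gives $\|\di\chi\|_{L^2}^2\le C\log\epsilon^{-1}/(\log\epsilon^{-1})^2=K_4/\log\epsilon^{-1}$. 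Because $\omega$ is comparable to the Euclidean metric on $U_p$ and $|\di r_p|\le 1$ there, the estimate reduces to bounding the Euclidean area of the curve $C_i\cap\{r\le t\}$ by $Ct^2$. The Lelong/monotonicity bound (as in \Cref{lem:uniform_tubular_volume}) gives exactly this, with $C$ controlled by the multiplicity of $C_i$ at $p$.

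A dyadic decomposition then finishes (4). Split $A_\epsilon$ into shells $\{2^{-k-1}\le r\le 2^{-k}\}$. Each shell contributes at most $C\,2^{-2k}\cdot 2^{2k+2}=O(1)$ to the integral, and there are $O(\log\epsilon^{-1})$ shells, which gives the required $C\log\epsilon^{-1}$. Summing over the finitely many $p$ and absorbing the cross terms of the product rule (the supports of the gradients are disjoint) completes (4). The constant $K_4$ depends only on the coordinates, the comparison constants and the multiplicities, so only on $\pi$ near $0$. I expect the only delicate point to be this uniform area bound near a possibly singular branch point of $C_i$, and the monotonicity formula handles it.
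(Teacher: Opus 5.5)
Your proof is correct. The paper gives no argument of its own for this lemma: it quotes it verbatim from Dai--Yoshikawa (Lemma 6.6). Your construction is therefore a self-contained replacement for that citation. It has three ingredients:
\begin{itemize}
\item the logarithmic cutoff $\theta_{p,\epsilon}=\rho\bigl(\log(2r_p/\epsilon)/\log(4/\sqrt{\epsilon})\bigr)$, glued by a product over the pairwise disjoint balls $\{r_p<2\sqrt{\epsilon}\}$;
\item the quadratic area bound $\mathrm{area}(C_i\cap\{r_p\le t\})\le Ct^2$;
\item the count of $O(\log\epsilon^{-1})$ dyadic shells, each contributing $O(1)$ to $\int_{A_\epsilon} r_p^{-2}\,\omega_0$.
\end{itemize}
Together these give $\|d\chi^{(i)}_\epsilon\|^2_{L^2}\le K_4/\log\epsilon^{-1}$, and properties (1), (2), (3), (5) follow directly from the definition, as you say.

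One small correction concerns the constant in the area bound. Monotonicity says that $t\mapsto \mathrm{area}(C_i\cap B(p,t))/\pi t^2$ is nondecreasing. So $C$ is bounded by the value of this ratio at a fixed scale $R$ inside the chart. The multiplicity of $C_i$ at $p$ is the limit of the ratio as $t\to 0$, i.e.\ its infimum, so it does not control $C$. The conclusion $\mathrm{area}\le Ct^2$ is unaffected.

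You could also skip monotonicity altogether. Pass to the normalization of each local branch of $C_i$ at $p$. In a uniformizing parameter $\tau$ one has $\zeta=\tau^m v(\tau)$ with $v(0)\neq 0$, so $\log r_p=m\log|\tau|+O(1)$ with smooth error. Conformal invariance of the Dirichlet energy then gives $\int_{A_\epsilon}|d\log r_p|^2=O(\log\epsilon^{-1})$ by integrating $|\tau|^{-2}$ over an annulus.
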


\vone
\noindent \hypertarget{step3}{\textbf{Step 3:}} We use the family of diffeomorphisms
generated by flow in \hyperlink{step1}{\textbf{Step 1}} and the collection of 
good test functions on $X_0$ in \hyperlink{step2}{\textbf{Step 2}} to construct
the collection of \emph{model functions} on $X_s$.
\begin{proposition}[\cite{Dai-Yoshikawa2025}*{Section 6.2}]
    \label[proposition]{prop:good_test_on_X_s}
    For $0 < \abs{s} \ll 1$, there exists a family of smooth 
    functions $\Upsilon^{(i)}_s \in C^{\infty}(X_s)$ ($1 \leq i 
    \leq N$), such that
    \begin{enumerate}
        \item For any $1 \leq i \leq N$ and any $s$, there is a constant $C > 0$ depending only on the geometry of degeneration such that $0 \leq \Upsilon^{(i)}_s \leq C$.

        \item 
        For $1 \leq i \leq N$, the function $\Upsilon^{(i)}(s, z)
        \coloneqq \Upsilon^{(i)}_s(z)$ for $0 < \abs{s} \ll 1, z \in X_s$
        is continuous on $X \setminus X_0$, i.e., the family varies
        continuously in $s$ away from singular fibers.
        \item For $1 \leq i, j \leq N$, $i \neq j$, we have 
        $\supp \Upsilon^{(i)}_s \cap \supp \Upsilon^{(j)}_s
        = \emptyset$.
        \item For $1 \leq i \leq N$, we have estimates 
        \[
        \norm{\Upsilon^{(i)}_s}^2_{L^2(X_s)} = 1 + O(\abs{s}^{\frac{1}{8\nu}}),
        \quad 
        \norm{\di \Upsilon^{(i)}_s}^2_{L^2(X_s)} \leq \frac{K}{\log(\abs{s}^{-1})}.
        \]
        The constant $K > 0$ depends only on $(\pi \colon X \to \D, \omega)$.
        \item The function $\Upsilon_s^{(i)}(z)$ is constant for 
        $z \in X_s$ in  $F_s(X_0 \setminus \bigcup_{p \in \Sing X_0}
        B(p, 4 \epsilon(s)^{\frac{1}{2}}))$, where $\epsilon(s) = 2 \abs{s}^{\frac{1}{8\nu}}$.
    \end{enumerate}
\end{proposition}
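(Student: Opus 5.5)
The plan is to define $\Upsilon^{(i)}_s$ by transporting the test functions of \Cref{lem:good_test_central} to $X_s$ with the diffeomorphisms $F_s$ of \Cref{thm:dai-yoshikawa-package}. Concretely, take $\epsilon = \epsilon(s)$ (or a fixed power of $\abs{s}$ comparable to it) and set
\[
\Upsilon^{(i)}_s \coloneqq (F_s)_\ast \chi^{(i)}_{\epsilon(s)}
\]
on $O_s$, extended by zero to $X_s\setminus O_s$. Smoothness requires the support of $\chi^{(i)}_{\epsilon(s)}$ to lie inside the domain of $F_s$. If $r_p$ and the distance are comparable, the support is contained in $\{r_p \geq \frac12\epsilon(s)\}$. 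The domain of $F_s$ excludes $B(p,\epsilon(s))$, so I would instead use $\chi^{(i)}_{c\,\epsilon(s)}$ with a larger fixed constant $c$, chosen so that $\supp\chi^{(i)}\Subset X_0\setminus\cup_p B(p,\epsilon(s))$; this only changes constants. If needed, I would renormalize by a constant factor close to $1$, which is why item (1) allows a constant $C$ rather than $1$.

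The properties then follow one at a time. For (1), $0\le\chi\le1$ and push-forward preserves bounds. For (2), continuity in $s$ comes from the smooth dependence of $F_s$ on $s$ (\Cref{thm:dai-yoshikawa-package}.(2)) together with the continuity of $\epsilon\mapsto\chi^{(i)}_\epsilon$ in \Cref{lem:good_test_central}.(5). For (3), the supports of different $\chi^{(i)}$ lie in different components $C_i\setminus\Sing X_0$, which are disjoint, and $F_s$ is injective. For (5), $\chi^{(i)}=1$ where $r_p\ge 2\sqrt{\epsilon}$, and $\chi^{(i)}$ vanishes identically on the other components. Since $2\sqrt{c\epsilon(s)}\lesssim 4\epsilon(s)^{1/2}$ after fixing constants, with $r_p$ comparable to the distance, the push-forward is constant (equal to $1$ or $0$) on $F_s(X_0\setminus\cup_p B(p,4\epsilon(s)^{1/2}))$.

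For (4), apply \Cref{thm:dai-yoshikawa-package}.(5) with $\chi=\chi'=\chi^{(i)}$ to get
\[
\norm{\Upsilon^{(i)}_s}^2_{L^2(X_s)} = \norm{\chi^{(i)}}^2_{L^2(X_0)} + O(\abs{s}^{1/2}).
\]
The term $\norm{\chi^{(i)}}^2_{L^2(X_0)}$ equals $\vol(C_i)$ minus the volume of the balls of radius $2\sqrt{\epsilon}$, which is $\vol(C_i)+O(\epsilon(s))=\vol(C_i)+O(\abs{s}^{1/(8\nu)})$. Normalizing, either by dividing by $\vol(C_i)^{1/2}$ or by reading the statement as relative to the component volume, gives $1+O(\abs{s}^{1/(8\nu)})$. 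For the gradient, the same item gives $\norm{\di\Upsilon^{(i)}_s}^2\le(1+K_3\abs{s}^{1/2})^2\norm{\di\chi^{(i)}}^2\le 2K_4/\log(\epsilon(s)^{-1})$. Since $\log\epsilon(s)^{-1}=\frac{1}{8\nu}\log\abs{s}^{-1}-\log 2$, this is at most $K/\log\abs{s}^{-1}$ for small $\abs{s}$.

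I expect the main obstacle to be the bookkeeping of radii: choosing the constant in $\epsilon$ so that the support condition and item (5) hold simultaneously. This needs the comparability of $r_p$ with the $\omega$-distance near $p$, which holds locally with fixed constants since $\zeta$ is a coordinate chart. Everything else is a direct transfer of the estimates in \Cref{thm:dai-yoshikawa-package}.
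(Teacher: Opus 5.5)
Your construction is the paper's own: it sets $\Upsilon^{(i)}_s=(F_s)_\ast\chi^{(i)}_{4\epsilon(s)}/\sqrt{\mathrm{Area}(C_i)}$. That is your $c=4$, which makes the plateau radius $2\sqrt{4\epsilon(s)}=4\epsilon(s)^{1/2}$ match item (5) exactly. The paper proves (1) and (2) exactly as you do and defers (3)--(5) to Dai--Yoshikawa, and your sketch of those items is correct.

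One correction: dividing by $\sqrt{\mathrm{Area}(C_i)}$ is neither optional nor a factor close to $1$. The areas sum to $\vol(X_0)=1$, so each is $<1$ when $N\ge 2$. The division is forced by the $L^2$ normalization in (4), and it is exactly why (1) carries the constant $C=\max_i\mathrm{Area}(C_i)^{-1/2}$.
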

\begin{proof}
    We define 
    \begin{equation}
        \label{eqn:test_fun_defn_upsilon}
        \Upsilon^{(i)}(s) = 
        (F_s)_{\ast}(\chi^{(i)}_{4\epsilon(s)})/ \sqrt{\mathrm{Area}(C_i)} \in C^{\infty}(X_s),
    \end{equation}
    where \(\epsilon(s) = 2 \abs{s}^{\frac{1}{8\nu}}\).

    Since $0 \leq \chi^{(i)}_{\epsilon(s)} \leq 1$, 
    \Cref{prop:good_test_on_X_s}.(1) is proved.
    
    Since $\chi^{(i)}_{\epsilon(s)}$ depends on $s$ continuously
    by \Cref{lem:good_test_central}.(5) and $F_s$ is continuous
    in $s$, $\Upsilon^{(i)}(s)$ varies continuously in $s$. Then \Cref{prop:good_test_on_X_s}.(2) is proved.

    For the remaining parts, we refer to  \cite{Dai-Yoshikawa2025}*{Section 6.2}.
\end{proof}

\begin{corollary}
    \label[corollary]{cor:local_constant_of_upsilon}
    For any $p \in X_0 \setminus \Sing X_0$, there is an open 
    neighborhood $U$ of $p$ such that $\Upsilon^{(i)}(s, z)
    \coloneqq \Upsilon^{(i)}_s(z)$ is constant for $1 \leq i \leq N$.

    Moreover, we can extend $\Upsilon^{(i)}(s, z)$ to be a continuous
    function on $X \setminus \Sing X_0$. And $\Upsilon^{(i)}(s, z)$
    is locally constant around $X_0 \setminus \Sing X_0$.
\end{corollary}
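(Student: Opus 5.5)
The plan is to read the statement off the explicit formula \eqref{eqn:test_fun_defn_upsilon} together with \Cref{prop:good_test_on_X_s}.(5) and the properties of the flow $F_s$ in \Cref{thm:dai-yoshikawa-package}. Fix $p \in X_0 \setminus \Sing X_0$ and let $i$ be the unique index with $p \in C_i$; since $p$ is a smooth point of the reduced fiber, it lies on exactly one component. Set $d \coloneqq \dist(p, \Sing X_0) > 0$. Take a small ball $B(p, d/4)$ and a radius $\delta > 0$ so small that $4\epsilon(s)^{1/2} = 4\sqrt{2}\,\abs{s}^{1/(16\nu)} < d/4$ and $K\abs{s}^{3/4} < d/8$ for $\abs{s} < \delta$. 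Also choose $\delta$ so that $r_q(z) \geq 2\sqrt{4\epsilon(s)}$ for all $q \in \Sing X_0$ and $z \in B(p,d/2) \cap X_0$; this is possible because the local coordinates $\zeta$ are comparable to the metric distance. Put $U \coloneqq B(p, d/8) \cap \pi^{-1}(\D_\delta)$.

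Next I show that every point $x \in U \cap X_s$ with $s \neq 0$ is of the form $F_s(z)$ with $z \in X_0 \setminus \bigcup_{q} B(q, 4\epsilon(s)^{1/2})$ and $z$ near $p$. I would use the construction of $F_s$ as the time-$s$ flow of the lifted vector field $v$ with $\pi_* v = \partial/\partial s$. The vector field $v$ is smooth on $X \setminus \Sing X_0$ and can be integrated backwards from $x$ to $X_0$, landing at some $z$ with $\dist(z, x) \leq K\abs{s}^{3/4}$ by \Cref{thm:dai-yoshikawa-package}.(3). Equivalently, on a trivializing chart $V \times \D_\delta$ around $p$, as in \Cref{dfn:central_fib_convergence}, one can take $F_s$ to be the chart translation. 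Then $z \in B(p, d/4)$, so it is far from $\Sing X_0$, and $x = F_s(z)$. This surjectivity of $F_s$ onto a neighbourhood of $p$ in nearby fibers is the main point needing care; the paper states $F_s$ only as a diffeomorphism onto its image $O_s$. I would justify it by the local triviality of $\pi$ near $X_{0,\rm reg}$ via Ehresmann, combined with the uniform displacement bound.

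Given this, \eqref{eqn:test_fun_defn_upsilon} gives $\Upsilon^{(j)}_s(x) = \chi^{(j)}_{4\epsilon(s)}(z)/\sqrt{\mathrm{Area}(C_j)}$. By \Cref{lem:good_test_central}.(1)–(3), for $z \in C_i$ with all $r_q(z) \geq 2\sqrt{4\epsilon(s)}$ we get $\chi^{(i)}_{4\epsilon(s)}(z) = 1$; in the case $z \notin \bigcup U_q$ this also holds by (2). For $j \neq i$ we get $\chi^{(j)}_{4\epsilon(s)}(z) = 0$, since its support lies in $C_j \setminus \Sing X_0$, which does not meet $C_i$ near $p$. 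Hence on $U \setminus X_0$ we have $\Upsilon^{(j)} \equiv \delta_{ij}/\sqrt{\mathrm{Area}(C_j)}$, which is constant; this is consistent with \Cref{prop:good_test_on_X_s}.(5).

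For the extension, define $\Upsilon^{(j)}(0, z) \coloneqq \delta_{ij}/\sqrt{\mathrm{Area}(C_j)}$ for $z \in C_i \setminus \Sing X_0$. This is well defined because the components are disjoint on $X_{0,\rm reg}$. The previous step shows that the extended function is constant on each neighbourhood $U$, so it is locally constant near $X_{0,\rm reg}$ and continuous there. Away from $X_0$ it is continuous by \Cref{prop:good_test_on_X_s}.(2). Together these give continuity on $X \setminus \Sing X_0$.
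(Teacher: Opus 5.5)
Your proposal is correct and follows the paper's argument: choose $U$ with $U\cap X_s\subset F_s\bigl(X_0\setminus\bigcup_{q}B(q,4\epsilon(s)^{1/2})\bigr)$, read off the constant value from \Cref{eqn:test_fun_defn_upsilon}, extend by that constant on $X_{0,\rm reg}$, and use \Cref{prop:good_test_on_X_s}.(2) off $X_0$; it is in fact more careful than the paper, which only asserts the covering property ``by the definition of $F_s$'' and writes the value as $1/\sqrt{\mathrm{Area}(C_i)}$ for every $i$ rather than your correct $\delta_{ij}/\sqrt{\mathrm{Area}(C_j)}$. One small fix is needed: $d=\dist(p,\Sing X_0)$ alone does not force $B(p,d/4)\cap X_0\subset C_i$, since another component can pass ambiently close to $p$ without meeting $C_i$ there, so also take the radius below $\dist\bigl(p,\bigcup_{j\neq i}C_j\bigr)$ (or simply work in a trivializing chart $V\times\D_\delta$ with $V\subset C_i$).
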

\begin{proof}
    By the definition of $F_s$ in \Cref{thm:dai-yoshikawa-package}, for any $p$ in $X_0 \setminus \Sing X_0$, we can take an open neighborhood $U$ of $p$
    in $X$ such that $U \cap X_s$ is contained in $F_s(X_0 
    \setminus \bigcup_{p \in \Sing X_0}
    B(p, 4 \epsilon(s)^{\frac{1}{2}}))$.
    From the definition of $\Upsilon^{(i)}$ in 
    \Cref{eqn:test_fun_defn_upsilon}, the $\Upsilon^{(i)}$
    equals to $1/\sqrt{\mathrm{Area}(C_i)}$ on $U$. So $\Upsilon^{(i)}$ is locally constant around any point in $X \setminus \Sing X_0$.

    Together with \Cref{prop:good_test_on_X_s}.(2), we know that 
    $\Upsilon^{(i)}$ is continuous on $X \setminus \Sing X_0$. Furthermore, 
    it equals to $1/\sqrt{\mathrm{Area}(C_i)}$ around any point 
    in  $X \setminus \Sing X_0$.
\end{proof}

We call $\Upsilon^{(i)}_s$ \emph{model functions}.

\subsection{Properties of small eigenfunctions}
\label{section:small-eigenfunctions}
Eigenfunctions associated with small eigenvalues are called \emph{small eigenfunctions}. We approximate these using the \emph{model functions} defined in \Cref{prop:good_test_on_X_s}. The results are summarized below.

\begin{theorem}
    \label{thm:approximate_eigenfunctions}
    Let $\{\Phi_i(s, z)\}_{i=1}^{N-1}$ be eigenfunctions of 
    $\Delta_{\omega_s}$ with eigenvalues $0 < \lambda_1(s) \leq \lambda_2(s) 
    \leq \cdots \leq \lambda_{N-1}(s)$, i.e.,
    \(
        -\Delta_{\omega_s} \Phi_i = \lambda_i(s) \Phi_i , 
    \)
    on $X_s$. Then for each $1 \leq j \leq N-1$, we have a decomposition
    for $\Phi_j$,
    \begin{equation}
        \label{eqn:eigenfunctions_correlation}
        \Phi_j(s, z) = \sum_{l=1}^{N}C_{l(j+1)}(s) \Upsilon^{(l)}(s, z)
        + R_j (s, z)
    \end{equation}
    for $0 < \abs{s} \ll 1$, where $C_{ij}(s) = \int_{X_s} \Upsilon^{(i)}_s \Phi_{j - 1}
    \omega_s = (\Upsilon^{(i)}_s, \Phi_{j - 1})_{L^2(X_s, \omega_s)}$. 
    And we have the following estimates
    \begin{enumerate}
        \item 
        There is a uniform constant $C$ such that $\abs{C_{lj}(s)}
        < C$ for all $1 \leq l \leq N$ and all $s$.
        \item There is a uniform constant $C$ such that 
        \[
            \norm{\di R_j}^2_{L^2(X_s)} \leq \frac{C^2}{\log(\abs{s}^{-1})},
            \quad
            \norm{R_j}^2_{L^2(X_s)} \leq C^2 \frac{1}{\log(\abs{s}^{-1})}.
        \]
        for all $1 \leq j \leq N$ and all $s$.
    \end{enumerate}
\end{theorem}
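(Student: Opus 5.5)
The decomposition itself is just an orthogonal splitting; the content is in the estimates. I will take the $\Phi_j$ to be $L^2$-orthonormal. For each $s$, define $R_j$ by the displayed formula. Note that $C_{l(j+1)}(s) = (\Upsilon^{(l)}_s, \Phi_j)$, and the $\Upsilon^{(l)}_s$ have disjoint supports with $\norm{\Upsilon^{(l)}_s}^2 = 1+O(\abs{s}^{1/(8\nu)})$ by \Cref{prop:good_test_on_X_s}. Hence $\sum_l C_{l(j+1)}\Upsilon^{(l)}$ is, up to an $O(\abs{s}^{1/(8\nu)})$ error, the orthogonal projection of $\Phi_j$ onto $V_s \coloneqq \operatorname{span}\{\Upsilon^{(l)}_s\}$.

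Estimate (1) is immediate from Cauchy--Schwarz: $\abs{C_{lj}} \le \norm{\Upsilon^{(l)}_s}\,\norm{\Phi_{j-1}} \le C$.

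For the gradient bound in (2), I will argue directly:
\[
\norm{\di R_j} \le \norm{\di \Phi_j} + \sum_l \abs{C_{l(j+1)}}\,\norm{\di \Upsilon^{(l)}_s}.
\]
Here $\norm{\di\Phi_j}^2$ is a fixed multiple of $\lambda_j(s)$, which is at most $C_1/\log(\abs{s}^{-1})$ by \Cref{thm:eigenvalue_growth_rate}. The second term is bounded by (1) together with $\norm{\di\Upsilon^{(l)}_s}^2 \le K/\log(\abs{s}^{-1})$. This gives $\norm{\di R_j}^2 \le C^2/\log(\abs{s}^{-1})$.

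For the $L^2$ bound on $R_j$, the plan is to use the spectral gap from \Cref{thm:high_eigenvalues_lowe_bound}, that is, $\lambda_N(s)\ge\lambda>0$. I will show that $R_j$ is almost orthogonal to the whole low-frequency space $E_s \coloneqq \operatorname{span}\{1,\Phi_1,\dots,\Phi_{N-1}\}$, which has dimension $N$. Then the min-max characterization gives
\[
\norm{\di R_j}^2 \gtrsim \lambda\, \norm{R_j - P_{E_s}R_j}^2,
\]
which bounds the part of $R_j$ orthogonal to $E_s$ by $C/\log(\abs{s}^{-1})$.

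To control $P_{E_s}R_j$, I will compare $V_s$ and $E_s$.
\begin{itemize}
\item Since $\norm{\di\Upsilon^{(l)}}^2 \le K/\log(\abs{s}^{-1})$, the same gap argument applied to each $\Upsilon^{(l)}_s$ shows that it lies within $O(1/\sqrt{\log(\abs{s}^{-1})})$ of $E_s$ in $L^2$.
\item The $\Upsilon^{(l)}_s$ are nearly orthonormal, and $\dim V_s = \dim E_s = N$. So $V_s$ and $E_s$ are close as subspaces: the projections satisfy $\norm{P_{V_s}-P_{E_s}} = O(1/\sqrt{\log(\abs{s}^{-1})})$.
\item Consequently $\Phi_j = P_{E_s}\Phi_j$ is within that distance of $P_{V_s}\Phi_j$. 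By the first paragraph, $P_{V_s}\Phi_j = \sum_l C_{l(j+1)}\Upsilon^{(l)} + O(\abs{s}^{1/(8\nu)})$.
\end{itemize}
Together these give $\norm{R_j} \le C/\sqrt{\log(\abs{s}^{-1})}$, which is the claimed bound $\norm{R_j}^2 \le C^2/\log(\abs{s}^{-1})$.

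The main obstacle is this subspace-comparison step. It needs uniform near-orthonormality of the $\Upsilon^{(l)}$. It also needs the constant function to be handled correctly: the constant lies in $E_s$ and is close to a combination of the $\Upsilon^{(l)}$, since $\sum_l \sqrt{\mathrm{Area}(C_l)}\,\Upsilon^{(l)}$ is approximately $1$. Finally, all constants must be uniform in $s$ as $s\to 0$.
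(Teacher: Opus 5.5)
Your proposal is correct and is essentially the paper's argument. In both, the decisive input is the spectral-gap bound $\norm{\Upsilon^{(l)}_s - P_{E_s}\Upsilon^{(l)}_s}^2 \le K/\log(\abs{s}^{-1})$, which is the paper's lemma on the high-frequency part $h^{(l)}_s$, and the gradient bound is the same triangle inequality. Your estimate $\norm{P_{V_s}-P_{E_s}} = O(\log^{-1/2}(\abs{s}^{-1}))$ is a coordinate-free form of the paper's step, which shows the correlation matrix satisfies $CC^T = I + E$ with $\norm{E} = O(\log^{-1/2}(\abs{s}^{-1}))$ and then inverts $C$ by a Neumann series.

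Two small remarks. First, your preliminary spectral-gap bound on $R_j - P_{E_s}R_j$ is redundant: it holds for any function without any almost-orthogonality, and your subspace comparison already controls all of $R_j$. Second, your version yields the decomposition with $\Upsilon^{(l)}$ itself, as in the statement. The paper's computation literally produces it with $\Upsilon^{(l)}_{s,\mathrm{low}}$, so one must still absorb $\sum_l C_{l(j+1)}h^{(l)}_s$ into $R_j$.
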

\begin{remark}
    We mention that the choice of small eigenfunctions is not 
    unique.
    Nevertheless, the non-uniqueness of choice can be controlled:
    Let $\{\Phi_i(s, z)\}_{i=1}^{N-1}$ and $\{\Phi'_i(s, z)\}_{i=1}^{N-1}$
    be two families of normalized small eigenfunctions. They span the same vector 
    space $V$ and are orthonormal bases under the $L^2$-inner 
    product. Thus, they are related by an orthogonal matrix, i.e., there exists a matrix $O \in O(N - 1)$ such that
    \[
    (\Phi'_1(s, z), \Phi'_2(s, z), \cdots, \Phi'_{N-1}(s, z))
    = (\Phi_1(s, z), \Phi_2(s, z), \cdots, \Phi_{N-1}(s, z)) O.
    \]
    From the equation \Cref{eqn:eigenfunctions_correlation}, we 
    have 
    \begin{align*}
    (\Phi'_1, \Phi'_2, \cdots, \Phi'_{N-1}) 
    &= (\Upsilon^{(1)}, \cdots, \Upsilon^{(N)}) (C_{l (j+1)})_{\substack{1 \leq l 
    \leq N\\ 1 \leq j \leq N-1}} O + (R_1, \cdots, R_{N-1})O\\
    &= (\Upsilon^{(1)}, \cdots, \Upsilon^{(N)}) (C'_{l (j+1)})_{\substack{1 \leq l 
    \leq N\\ 1 \leq j \leq N-1}} + (R_1', \cdots, R_{N-1}').
    \end{align*}

    So for another choice of eigenfunctions, the remainders
    are related by an orthogonal matrix and the estimates are the same 
    for $R_1', \cdots, R_{N-1}'$.
\end{remark}
\begin{proof}
    We begin with the Fourier expansion for each model function $\Upsilon^{(i)}_s$ $(1 \leq i \leq N)$
    on $X_s$ with respect to the lower frequency eigenfunctions
    $\{\Phi_j(s, z)\}_{j=1}^{N-1}$,
    \begin{equation}
    \label{eqn:Fourier_expansion_of_test}
        \Upsilon^{(i)}_s = 
        \underbrace{
            \int_{X_s} \Upsilon^{(i)}_s \omega_s + \sum_{j=1}^{N-1} c_{i j}(s) \Phi_j
        }_{\eqqcolon \Upsilon^{(i)}_{s, \mathrm{low}}} 
        + h^{(i)}_s,
    \end{equation}
    where $c_{ij}(s) = 
    \int_{X_s} \Upsilon^{(i)}_s {\Phi_j} \omega_s$ and $h^{(i)}_s$
    is the high frequency remainders. 
    
    We estimate $h^{(i)}_s$ in the following lemma:
    \blemma
    \label[lemma]{lem:high_freq_of_test}
    There is a uniform constant $K > 0$ depending only on the 
    degeneration such that 
    \[
    \norm{h^{(i)}_s}_{L^2(X_s, \omega_s)}^2 \leq 
    \frac{K}{\log(\abs{s}^{-1})}
    \]
    \elemma
    \begin{proof}(of \Cref{lem:high_freq_of_test}) Applying $-\Delta_{\omega_s}$ to \Cref{eqn:Fourier_expansion_of_test}, 
    we get the following equation on $X_s$
    \[
    -\Delta_{\omega_s} \Upsilon^{(i)}_s 
    = \sum_{j=1}^{N-1} c_{i j}(s) \lambda_j(s) \Phi_j - 
    \Delta_{\omega_s} h^{(i)}_s.
    \]
    
    Since $h^{(i)}_s$ has no low frequency modes, by spectral
    gap in \Cref{thm:high_eigenvalues_lowe_bound}, we have a 
    uniform $\lambda > 0$ such that
    \begin{equation}
    \label{eqn:tmp_high_estimates}
    \begin{split}
        \lambda \norm{h^{(i)}_s}_{L^2(X_s, \omega_s)}^2
        &\leq (-\Delta_{\omega_s} h^{(i)}_s, h^{(i)}_s)_{L^2(X_s)}\\
        &= -\left(\Delta_{\omega_s} \Upsilon^{(i)}_s + \sum_{j=1}^{N-1} c_{i j}(s) \lambda_j(s) \Phi_j, h^{(i)}_s\right)_{L^2(X_s)}\\
        &=\left(-\Delta_{\omega_s} \Upsilon^{(i)}_s, h^{(i)}_s\right)_{L^2(X_s)}\\
        &= \left(-\Delta_{\omega_s} \Upsilon^{(i)}_s, \Upsilon^{(i)}_s\right)_{L^2(X_s)} + \left(\Delta_{\omega_s} \Upsilon^{(i)}_s, \Upsilon^{(i)}_{s, \mathrm{low}}\right)_{L^2(X_s)}
    \end{split}
\end{equation}
    The equality in the third line holds since the high frequency term $h^{(i)}_s$ in the Fourier expansion (\Cref{eqn:Fourier_expansion_of_test}) is $L^2(X_s, \omega_s)$-orthogonal to 
    low frequency modes $\Phi_j$ ($1 \leq j \leq N-1$).

    Integrating by parts on the compact manifold $X_s$ and using the estimate for the model functions $\Upsilon^{(i)}_s$ (in \Cref{prop:good_test_on_X_s}(4)), we get
    \[
    \left(-\Delta_{\omega_s} \Upsilon^{(i)}_s, \Upsilon^{(i)}_s\right)_{L^2(X_s)} = \norm{\di \Upsilon^{(i)}_s}^2_{L^2(X_s)}
    \leq \frac{K}{\log(\abs{s}^{-1})},
    \]
    where $K$ is a uniform constant.

    Since $\Delta_{\omega_s}$ preserves orthogonality in 
    the Fourier expansion \Cref{eqn:Fourier_expansion_of_test},
    we have 
    
    \begin{align*}
        {\left(\Delta_{\omega_s} \Upsilon^{(i)}_s, \Upsilon^{(i)}_{s, \mathrm{low}}\right)_{L^2(X_s)}} &= {\left(\Delta_{\omega_s} \Upsilon^{(i)}_{s, \mathrm{low}}, \Upsilon^{(i)}_{s, \mathrm{low}}\right)_{L^2(X_s)}} \\
        &= - \norm{\di \Upsilon^{(i)}_{s, \mathrm{low}}}^2_{L^2(X_s)} \leq 0.
    \end{align*}

    Put things back into \Cref{eqn:tmp_high_estimates}, 
    we get 
    \[
    \lambda \norm{h^{(i)}_s}_{L^2(X_s, \omega_s)}^2
    \leq \frac{K}{\log(\abs{s}^{-1})}.
    \]
    So we can find a uniform $K_1 > 0$ such that for all $1 \leq i 
    \leq N$, we have 
    \[
    \norm{h^{(i)}_s}_{L^2(X_s, \omega_s)}^2 \leq 
    \frac{K'}{\log(\abs{s}^{-1})}.
    \]
    \end{proof}

    We now go back to our theorem.
    \begin{enumerate}
        \item 
        We use the notation $\Phi_0\coloneqq 1$ and consider 
        the following correlation matrix $C(s) = (C_{ij}(s))
        _{1 \leq i, j \leq N}$,
        $$
        C_{ij}(s) = \int_{X_s} \Upsilon^{(i)}_s \Phi_{j - 1}
        \omega_s = (\Upsilon^{(i)}_s, \Phi_{j - 1})_{L^2(X_s, \omega_s)}.
        $$
        Then $\abs{C_{ij}(s)} \leq \norm{\Upsilon^{(i)}_s}_{L^2} 
        \norm{\Phi_{j - 1}}_{L^2} = O(1)$. 
        \item 
        We then show that the correlation matrix $C(s)$ is 
        close to an orthogonal matrix with controlled error terms:
        \begin{align*}
            (CC^{T})_{ij} &= \sum_{l=1}^{N} C_{il}C_{jl}\\ 
            &= \sum_{l=1}^{N-1} 
            (\Upsilon^{(i)}_s, \Phi_{l - 1})_{L^2(X_s)}
            (\Upsilon^{(j)}_s, \Phi_{l - 1})_{L^2(X_s)} \\
            &= (\Upsilon^{(i)}_s, \sum_{l=1}^N\Phi_{l-1}  ( \Upsilon^{(j)}_s, \Phi_{l-1}) )
            \\
            &= (\Upsilon^{(i)}_s, \Upsilon^{(j)}_s  - h^{(j)}_s)\\
            &= \norm{\Upsilon^{(i)}_s}^2_{L^2(X_s)}\delta_{ij} - (\Upsilon^{(i)}_s, h^{(j)}_s)
        \end{align*}
        The last equality follows by \Cref{prop:good_test_on_X_s}(3).

        Since $\norm{\Upsilon^{(i)}_s}^2_{L^2(X_s)} = 
        1 + O(\abs{s}^{\frac{1}{8\nu}})$ (by \Cref{prop:good_test_on_X_s}(4)) and $\abs{(\Upsilon^{(i)}_s, h^{(j)}_s)} \leq \norm{h^{(j)}_s}_{L^2} \norm{\Upsilon^{(i)}_s}_{L^2} = O(\frac{1}{\log(s^{-1})})^{\frac{1}{2}}$ (by \Cref{lem:high_freq_of_test}), we have 
        \[
        C C^T = I +E,
        \]
        where $E$, the error term, is controlled by $\norm{E}
        \leq K_1 (\frac{1}{\log(s^{-1})})^{\frac{1}{2}}$. 
        So \(C^{-1} = C^T (I + E)^{-1}\). 
        
        Since $\norm{E} \ll 1$
        for $\abs{s} \ll 1$, we have 
        \[
        (I + E)^{-1} = I - E + E^2 - E^3 + \cdots \eqqcolon 
        I + E',
        \]
        where $E' = \sum_{i \geq 1}(-1)^{i}E^i$. We estimate that 
        \[
        \norm{E'} \leq \norm{E}(1 + \sum_{i \geq 1} \norm{E}^i) 
        \leq K_2 (\frac{1}{\log(s^{-1})})^{\frac{1}{2}}
        \]
        for some uniform constant $K_2$. Then 
        \[
        C^{-1} = C^T (I + E').
        \]

Note that 
    \begin{align*}
        \begin{pmatrix}
            \Upsilon^{(1)}_s - h^{(1)}_s\\
            \Upsilon^{(2)}_s - h^{(2)}_s\\
            \vdots\\
            \Upsilon^{(N)}_s - h^{(N)}_s
        \end{pmatrix}= 
        \begin{pmatrix}
            \Upsilon^{(1)}_{s, \mathrm{low}}\\
            \Upsilon^{(2)}_{s, \mathrm{low}}\\
            \vdots\\
            \Upsilon^{(N)}_{s, \mathrm{low}}
        \end{pmatrix}
         = C(s) 
        \begin{pmatrix}
            \Phi_0\\
            \Phi_1\\
            \vdots\\
            \Phi_{N-1}
        \end{pmatrix} = C(s) 
        \begin{pmatrix}
            1\\
            \Phi_1\\
            \vdots\\
            \Phi_{N-1}
        \end{pmatrix}
    \end{align*}

    We have 
    \begin{align*}
        \begin{pmatrix}
            1\\
            \Phi_1\\
            \vdots\\
            \Phi_{N-1}
        \end{pmatrix} = C^{-1}(s)\begin{pmatrix}
            \Upsilon^{(1)}_{s, \mathrm{low}}\\
            \Upsilon^{(2)}_{s, \mathrm{low}}\\
            \vdots\\
            \Upsilon^{(N)}_{s, \mathrm{low}}
        \end{pmatrix}
        = &C^{T}(s)(I + E')
        \begin{pmatrix}
            \Upsilon^{(1)}_{s, \mathrm{low}}\\
            \Upsilon^{(2)}_{s, \mathrm{low}}\\
            \vdots\\
            \Upsilon^{(N)}_{s, \mathrm{low}}
        \end{pmatrix}
    \end{align*}
        So for each \(1 \leq j \leq N - 1\), we have 
        \begin{align}\label{eqn:tmp_eigenfunctions_correlation}
            \Phi_j(s) &= \sum_{l=1}^{N}C_{l(j + 1)}(s)\Upsilon^{(l)}_{s, \mathrm{low}} + R_j.
        \end{align}
        And the remainder term $R_j = \left(C^T \cdot E'\cdot  (\Upsilon^{(1)}_{s, \mathrm{low}}, \cdots, \Upsilon^{(N)}_{s, \mathrm{low}})^T\right)_{j+1}$ is controlled by
        \[
        \abs{R_j} \leq \norm{C^T}_{L^\infty} \norm{E'}_{L^\infty} \norm{(\Upsilon^{(1)}_{s, \mathrm{low}}, \cdots, \Upsilon^{(N)}_{s, \mathrm{low}})^T}_{L^\infty} = O(\frac{1}{\log(s^{-1})})^{\frac{1}{2}}.
        \]
        So $\norm{R_j}_{L^2(X_s)} \leq C^2 \frac{1}{\log(\abs{s}^{-1})}$ for some constant $C > 0$.

        To estimate the derivative of the remainder, we take the 
        derivative of the \Cref{eqn:tmp_eigenfunctions_correlation}
        on $X_s$,
        \[
        \di \Phi_j(s) - C_{l(j + 1)}(s)\sum_{l=1}^{N}\di \Upsilon^{(l)}_{s, \mathrm{low}} = \di R_j.
        \]
        Then 
        \begin{align*}
            \norm{\di R_j}_{L^2}
            \leq \sum_{l=1}^N \norm{C_{l(j + 1)}}_{L^\infty}
            \norm{\di \Upsilon^{(l)}_{s, \mathrm{low}}}_{L^2}
            + \norm{\di \Phi_j(s)}_{L^2}.
        \end{align*}
        Since $\norm{\di \Phi_j(s)}_{L^2} = \lambda_{j}^{\frac{1}{2}}(s)$ and $\norm{\di \Upsilon^{(l)}_{s, \mathrm{low}}}^2_{L^2} = O\left(\frac{1}{\log(\abs{s}^{-1})}\right)$ by 
        \Cref{prop:good_test_on_X_s}(4), 
        we have 
        \[
        \norm{\di R_j}_{L^2}^2 \leq C^2 \frac{1}{\log(\abs{s}^{-1})}
        \]
        for some constant $C > 0$.
    \end{enumerate}
\end{proof}

\section{Estimates of preferred potentials}
\label{section:3-preferred-potentials}
The setup for this section is the same as \Cref{section:2-spectral-geometry-dai-yoshikawa} (see \Cref{setup:degeneration_with_reduced_fiber}).

Given a smooth real $(1, 1)$-form $\alpha$
on $X$, we introduce the following two integrability
conditions:
\begin{condition}
    \label[condition]{condition:integrability_general_fibers}
    We say $\alpha$ satisfies \emph{the integrability 
    condition on general fibers}, if 
    \(
        \int_{X_s} \alpha = 0
    \)
    for all $s \neq 0$.
\end{condition}

\begin{condition}
    \label[condition]{condition:integrability_singular_fibers}
    We say $\alpha$ satisfies \emph{the integrability 
    condition on the singular fiber}, if $\alpha$ satisfies \emph{the integrability 
    condition on general fibers (\Cref{condition:integrability_general_fibers})} and 
    \[
        \int_{C_i} \alpha = 0,
    \]
    where $C_i$ ($1 \leq i \leq N$) are irreducible components
    of the reduced singular fiber $X_0$. 
\end{condition}

Our goal in this section is to study the \emph{preferred potentials} of smooth $(1, 1)$-forms on $X$ satisfying \Cref{condition:integrability_general_fibers}. The notion of \emph{preferred potentials} originates from \cite{Filip-Tosatti21}*{Theorem 3.2.4}.
\begin{definition}[\bf{Preferred potentials}]
    \label{dfn:preferred_potential}
    Let $\alpha$ be a smooth real $(1, 1)$-form on $(X, \omega)$ satisfying \emph{the integrability 
    condition on general fibers (\Cref{condition:integrability_general_fibers})}.
    Then there is a unique $\vphi \in C^\infty(X\setminus X_0)$
    such that 
    \begin{align*}
        \int_{X_s} \vphi \omega_s  = 0,
        \quad 
        (\alpha + \ddc \vphi)\mid_{X_s} = 0.
    \end{align*}
    We say that $\vphi$ is the \emph{preferred potential} of the differential form $\alpha$.
\end{definition}

Since $(X_s, \omega_s)$ is compact and $\vphi\mid_{X_s}\eqqcolon \vphi_s$ 
is smooth on it, we can expand it using Fourier series 
in terms of eigenfunctions of $\Delta_{\omega_s}$:
\[
    \vphi_s = \sum_{i=1}^{N-1} c_i(s) \Phi_i(s, z)
    + \sum_{i\geq N} c_i(s) \Phi_i(s, z).
\]
The coefficients are determined by 
\begin{align*}
    c_i(s) &= \int_{X_s} \vphi_s \Phi_i(s, z) \omega_s\\
    &= \int_{X_s} \vphi_s \frac{-\ddc_z \Phi_i(s, z)}{\lambda_i(s)}\\
    &= \int_{X_s} \ddc_z(\vphi_s) \frac{-\Phi_i(s, z)}{\lambda_i(s)}\\
    &= \frac{1}{\lambda_i(s)}\int_{X_s} \Phi_i(s, z) \alpha.
\end{align*}
We use $b_i(s) \coloneqq \int_{X_s} \Phi_i(s, z) \alpha$ to denote the 
integral terms, and the properties of $\lambda_i(s)$ are known 
for $1 \leq i \leq N-1$ in \Cref{thm:eigenvalue_growth_rate}.

We then divide $\vphi$ into two parts, a low frequency part and a high 
frequency part, following the Fourier expansion:
\begin{align}
    \label{eqn:defn_of_freq_decomp}
    \vphi_{\rm low} = \sum_{i=1}^{N-1} c_i(s) \Phi_i(s, z),
    \quad 
    \vphi_{\rm high} = \sum_{i\geq N} c_i(s) \Phi_i(s, z).
\end{align}
These two parts are smooth on $X\setminus X_0$ by \Cref{lem:proj_onto_small_freq_is_smooth}.

Then we introduce the main theorems of this section.
\begin{theorem}
    \label{thm:boundedness_of_preferred_potential_general}
    We work under \Cref{setup:degeneration_with_reduced_fiber}.
    Suppose $\alpha$ is a smooth real $(1,1)$-form on $(X,\omega)$ with 
    \Cref{condition:integrability_general_fibers}. Let 
    $\vphi$ be the preferred potential of $\alpha$.
    \begin{enumerate}
        \item We have an $L^\infty$ bound for $\vphi_{\rm high}$,
        \[
            \norm{\vphi_{\rm high}}_{L^\infty(X_s)}
            \leq C \norm{\tr_{\omega_s} \alpha}_{L^\infty(X_s)}
        \]
        for all $s \neq 0$. The constant $C$ only depends
        on the geometry of the degeneration.
        \item We have control over the growth order of 
        $\norm{\vphi_{\rm low}}_{L^\infty(X_s)}$, 
        \[
            \norm{\vphi_{\rm low}}_{L^\infty(X_s)}
            \leq C \norm{\tr_{\omega_0} \alpha}_{L^1(X_0)}
            \log(\abs{s}^{-1}).
        \]
        for all $s \neq 0$.
        The constant $C$ only depends
        on the geometry of the degeneration.
    \end{enumerate}
\end{theorem}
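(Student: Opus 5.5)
Part (2) is the easier of the two, so I treat it first. Since $\vphi_{\rm low}=\sum_{i=1}^{N-1}\lambda_i(s)^{-1}b_i(s)\Phi_i$ with $b_i(s)=\int_{X_s}\Phi_i\alpha$, \Cref{thm:eigenvalue_growth_rate} gives $\lambda_i(s)^{-1}\le C_0^{-1}\log(\abs{s}^{-1})$. Two ingredients remain: a uniform bound $\abs{b_i(s)}\le C\norm{\tr_{\omega_0}\alpha}_{L^1(X_0)}$, and a uniform bound on $\norm{\Phi_i}_{L^\infty(X_s)}$.

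For the $b_i$, I will write $\abs{b_i}\le \norm{\Phi_i}_{L^\infty}\int_{X_s}\abs{\tr_{\omega_s}\alpha}\,\omega_s$. The fiber integral $\int_{X_s}\abs{\tr_{\omega_s}\alpha}\omega_s$ is bounded uniformly for $\abs{s}$ small. It converges to the $X_0$ value: split off the region near $\Sing X_0$ using \Cref{lem:uniform_tubular_volume}, and treat the rest via \Cref{thm:dai-yoshikawa-package}(4) and \Cref{rmk:of_dai_yoshikawa_package}. This produces the $L^1(X_0)$ norm up to a constant. Away from $0$, compactness of $\overline{\D_{\delta_0}}\setminus\D_{\delta}$ handles everything.

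The uniform $L^\infty$ bound on eigenfunctions is what I expect to be the main obstacle. The equation $-\Delta\Phi_i=\lambda_i\Phi_i$ with $\lambda_i\le 1$ and $\norm{\Phi_i}_{L^2}=1$ gives local bounds on the thick part by Moser iteration (\cite{GT-PDE}*{Theorem 8.24}, as in \Cref{prop:central_convergence_compactness}). Near $\Sing X_0$ the thin part is a long collar. There I would use that $\Delta_{\omega_s}$ is $\omega_s^{-1}\ddc$ on the fiber $X_s\subset X$, and apply the uniform Sobolev/mean-value inequality for minimal (complex) submanifolds of the Kähler manifold $X$. The latter follows from monotonicity, as already used in \Cref{lem:uniform_tubular_volume}, and gives $\sup\abs{u}\le C\norm{u}_{L^2}$ for subsolutions $\Delta u\ge -u$. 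The function $\abs{\Phi_i}$ is such a subsolution (Kato), so $\norm{\Phi_i}_{L^\infty}\le C$ uniformly. This is the Li–Tian type uniform heat kernel bound, and I would cite it in that form.

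For part (1), the high-frequency component satisfies $-\Delta_{\omega_s}\vphi_{\rm high}=P_{\rm high}(\tr_{\omega_s}\alpha)$, up to the sign convention $\ddc\vphi=-\alpha$. The operator $(-\Delta)^{-1}P_{\rm high}$ is bounded $L^2\to L^2$ with norm $\le\lambda^{-1}$ by the spectral gap of \Cref{thm:high_eigenvalues_lowe_bound}. Hence $\norm{\vphi_{\rm high}}_{L^2}\le\lambda^{-1}\norm{\tr\alpha}_{L^2}\le\lambda^{-1}\norm{\tr\alpha}_{L^\infty}$, using that the volume is $1$, and similarly $\norm{\di\vphi_{\rm high}}_{L^2}$ is bounded. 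To upgrade to $L^\infty$, I split $\vphi_{\rm high}=\vphi-\vphi_{\rm low}$ and write $-\Delta\vphi_{\rm high}=\tr_{\omega_s}\alpha-\sum_{i<N}b_i\Phi_i$. The right-hand side is bounded in $L^\infty$ by $C\norm{\tr\alpha}_{L^\infty}$, using the uniform eigenfunction bound again and $\abs{b_i}\le C\norm{\tr\alpha}_{L^\infty}$. The same uniform De Giorgi–Nash–Moser/mean-value inequality on the fibers then gives $\norm{\vphi_{\rm high}}_{L^\infty}\le C(\norm{\vphi_{\rm high}}_{L^2}+\norm{\Delta\vphi_{\rm high}}_{L^\infty})\le C\norm{\tr\alpha}_{L^\infty}$.

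If the uniform Sobolev inequality on the collars proves delicate, the fallback is to use the uniform Green function/heat kernel estimates on the degenerating family (Li–Tian, Yoshikawa). A third option is an explicit harmonic-function comparison on the cylinder $\{\zeta_1\zeta_2=s\}$, where the bounded-right-hand-side Poisson problem has $O(1)$ solutions with bounded boundary data by the maximum principle.
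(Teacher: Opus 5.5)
Your argument is correct in substance and differs from the paper in both parts.

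\textbf{Part (1).} The paper runs a Cheng--Li argument. It bounds the truncated Green's function $G^N_s$ uniformly from below (\Cref{prop:bound_truncated_green_func}) using three facts: positivity of the heat kernel, exponential decay of the truncated heat kernel for $t\ge 1$ from the spectral gap, and an on-diagonal heat kernel bound together with sup bounds on small eigenfunctions. The last two both come from the uniform Sobolev inequality \Cref{thm:uniform_sobolev_projective_family}. It then uses $\int_{X_s} G^N_s(z,\cdot)\,\omega_s=0$. You instead take the $L^2$ bound from the spectral gap and run a global Moser iteration for $-\Delta\vphi_{\rm high}=\tr_{\omega_s}\alpha-\sum_{i<N}b_i\Phi_i$. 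With the same uniform $L^4$ Sobolev inequality on the closed surfaces $X_s$, this gives $\norm{u}_{L^\infty}\le C(\norm{u}_{L^2}+\norm{\Delta u}_{L^\infty})$ uniformly. So your route works and avoids heat kernels; the paper's route buys a uniform pointwise bound on the kernel. The eigenfunction sup bound you single out as the main obstacle is immediate from \Cref{thm:uniform_sobolev_projective_family} and \Cref{thm:eigenfunctions_bounds_sobolev}. Your minimal-submanifold/monotonicity route is the standard way such a uniform Sobolev inequality is proved, so nothing beyond it is needed.

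\textbf{Part (2).} The paper expands $\Phi_i$ in the model functions $\Upsilon^{(l)}_s$ with an $O(\log^{-1/2}(\abs{s}^{-1}))$ remainder (\Cref{thm:approximate_eigenfunctions}). As a result, the leading part of $b_i$ depends only on the component integrals $\int_{C_l}\alpha$. Your H\"older bound $\abs{b_i}\le\norm{\Phi_i}_{L^\infty}\norm{\tr_{\omega_s}\alpha}_{L^1(X_s)}$ is simpler and directly gives $\norm{\vphi_{\rm low}}_{L^\infty(X_s)}\le C\norm{\tr_{\omega_s}\alpha}_{L^1(X_s)}\log(\abs{s}^{-1})$. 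However, it cannot see the cancellation when all $\int_{C_l}\alpha=0$, and that cancellation is what the paper's decomposition is set up for in \Cref{thm:preferred_potential_integrability_singular_fiber}.

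\textbf{An overstated step.} $\norm{\tr_{\omega_s}\alpha}_{L^1(X_s)}$ is not bounded by a constant times $\norm{\tr_{\omega_0}\alpha}_{L^1(X_0)}$. Indeed, (2) cannot hold literally as stated. Take $\alpha=\ddc(\abs{\pi}^2 g)$, where $g\in C^\infty(X)$ extends $\Phi_1(s_0,\cdot)$ for some fixed $s_0$. Then $\alpha|_{X_0}=0$, but $\vphi_{\rm low}=-\abs{s}^2P^s_{\rm low}(g|_{X_s})$, which is nonzero at $s_0$.

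What your method honestly yields is
\[
\norm{\tr_{\omega_s}\alpha}_{L^1(X_s)}\le\norm{\tr_{\omega_0}\alpha}_{L^1(X_0)}+C\norm{\alpha}_{C^1}\abs{s}^{1/(4\nu)}
\]
for $0<\abs{s}<\delta_0$. This follows from \Cref{thm:dai-yoshikawa-package}(4) on the image of $F_s$ and \Cref{cor:measure_of_unmapped} on its complement. Note that compactness away from $0$ only gives an $\alpha$-dependent constant, not the $X_0$-norm. So you obtain the stated bound plus an additive $C\norm{\alpha}_{C^1}\abs{s}^{1/(4\nu)}\log(\abs{s}^{-1})=o(1)$.

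The paper's proof has the same feature: it drops an $O(\log^{1/2}(\abs{s}^{-1}))\norm{\tr_{\omega_s}\alpha}_{L^2(X_s)}$ term in $c_i$. Your version is therefore, if anything, the sharper correct form, provided you state it with this error term.
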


When $\alpha$ satisfies the integrability condition
on the singular fiber (\Cref{condition:integrability_singular_fibers}),
the growth order of $\norm{\vphi_{\rm low}}_{L^\infty(X_s)}$ has a 
better control.
\begin{theorem}
    \label{thm:preferred_potential_integrability_singular_fiber}
    We use the same assumption and notations in \Cref{thm:boundedness_of_preferred_potential_general}. 
    We assume that the smooth real $(1,1)$-form $\alpha$ 
    satisfies \Cref{condition:integrability_singular_fibers}.
    Then the low frequency 
    part $\vphi_{\rm low}$ of the preferred potential satisfies
    \[
        \frac{\norm{\vphi_{\rm low}}_{L^\infty(X_s)}}{\log^{1/2}(\abs{s}^{-1})} \to 0, \quad
    \text{as } s \to 0.
    \]
\end{theorem}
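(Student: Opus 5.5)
We write $\vphi_{\rm low} = \sum_{i=1}^{N-1} c_i(s)\Phi_i$ with $c_i(s) = b_i(s)/\lambda_i(s)$, where $b_i(s) = \int_{X_s}\Phi_i\,\alpha$. By \Cref{thm:eigenvalue_growth_rate}, $\lambda_i(s)^{-1} \le C_0^{-1}\log(\abs{s}^{-1})$. Hence it suffices to prove two bounds:
\begin{enumerate}
    \item a uniform bound $\norm{\Phi_i}_{L^\infty(X_s)} \le C$;
    \item the decay $\abs{b_i(s)} = o\bigl(\log^{-1/2}(\abs{s}^{-1})\bigr)$.
\end{enumerate}
Then $\norm{\vphi_{\rm low}}_{L^\infty} \le \sum_i \abs{b_i}\,\lambda_i^{-1}\norm{\Phi_i}_{L^\infty} = o(\log^{1/2}(\abs{s}^{-1}))$.

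For the $L^\infty$ bound on $\Phi_i$, I would use the eigenvalue equation $-\Delta_{\omega_s}\Phi_i = \lambda_i\Phi_i$ with $\lambda_i \to 0$ and $\norm{\Phi_i}_{L^2}=1$. A Moser iteration on $X_s$ needs a uniform Sobolev inequality along the degenerating family. This is available for fibers of a holomorphic map into a Kähler manifold: they are minimal submanifolds with bounded volume, so the Michael--Simon inequality holds with uniform constants. This should be the same input that gives part (1) of \Cref{thm:boundedness_of_preferred_potential_general}, so I expect to reuse that argument.

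The main step is the decay of $b_i$. We substitute the decomposition \Cref{eqn:eigenfunctions_correlation} from \Cref{thm:approximate_eigenfunctions}:
\[
b_j(s) = \sum_{l=1}^N C_{l(j+1)}(s)\int_{X_s}\Upsilon^{(l)}_s\,\alpha + \int_{X_s} R_j\,\alpha .
\]
The coefficients $C_{l(j+1)}$ are bounded. Write $\alpha_s = g_s\,\omega_s$ with $\abs{g_s}$ uniformly bounded. By Cauchy--Schwarz and \Cref{thm:approximate_eigenfunctions}(2), the remainder term is $\le C\norm{R_j}_{L^2} \le C'\log^{-1/2}(\abs{s}^{-1})$. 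This is only $O$, not $o$, which is the main obstacle.

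For the model function terms, \Cref{cor:local_constant_of_upsilon} shows $\Upsilon^{(l)}_s = \mathrm{Area}(C_l)^{-1/2}$ on the image $F_s$ of $C_l$ minus shrinking balls. Together with \Cref{rmk:of_dai_yoshikawa_package}, \Cref{cor:measure_of_unmapped} and $\int_{C_l}\alpha = 0$, this gives $\int_{X_s}\Upsilon^{(l)}_s\alpha = O(\abs{s}^{c})$ for some $c>0$.

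To upgrade the remainder estimate to $o(\log^{-1/2})$, I would test against the Laplace equation rather than use the crude $L^2$ bound. Pick a fixed function $\psi$ on $X_0$ with $\ddc\psi = -\alpha|_{X_0}$; it exists since $\int_{C_l}\alpha=0$ on each component, and it is bounded and of class $W^{1,2}$. Transport it to $X_s$ via $F_s$ and cut it off near the necks. This gives $\psi_s$ with $\ddc\psi_s = -\alpha_s + e_s$, where $\norm{\di\psi_s}_{L^2}$ is bounded and $e_s$ is supported in the thin part. Then
\[
\int_{X_s} R_j\,\alpha = \int_{X_s}\di R_j\wedge\di^c\psi_s + \int_{X_s} R_j\,e_s .
\]
The first term is controlled by $\norm{\di R_j}_{L^2}$ restricted to the region where $\di\psi_s \ne 0$. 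The second term is handled using the smallness of the thin part's volume together with Moser bounds on $R_j$. Both are expected to be $o(\log^{-1/2}(\abs{s}^{-1}))$, because the $\log^{-1/2}$ size of $\di R_j$ concentrates in the necks, while $\di\psi_s$ is $L^2$-small there: the part of $\norm{\di\psi_s}_{L^2}$ inside a ball of radius $r$ tends to $0$ as $r\to0$.

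Making this concentration precise is the delicate part. If it fails, a fallback is a compactness argument by contradiction. Rescale $R_j\log^{1/2}(\abs{s}^{-1})$ and use \Cref{prop:central_convergence_compactness} to pass to a limit on $X_{0,\rm reg}$. The limit is harmonic in $W^{1,2}$, hence locally constant, so its pairing with $\alpha$ vanishes by \Cref{cor:fiber_int_continuity_central_convergence}. This contradicts a lower bound $\abs{b_j}\gtrsim\log^{-1/2}$ along the sequence.
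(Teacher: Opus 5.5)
Your overall structure matches the paper: $c_i=b_i/\lambda_i$, a uniform $L^\infty$ bound on $\Phi_i$ from a uniform Sobolev inequality plus Moser iteration, the splitting of $b_j$ via \Cref{eqn:eigenfunctions_correlation}, and model-function terms of size $O(\abs{s}^{c})$ because $\int_{C_l}\alpha=0$. Your Michael--Simon route to the Sobolev inequality is the same input as the uniform Sobolev inequality the paper cites. More importantly, your \emph{fallback} is exactly the paper's proof of the key step. The paper assumes $\log^{1/2}(\abs{s_n}^{-1})\abs{\int_{X_{s_n}}R_j\alpha}\ge\epsilon_0$ and applies \Cref{prop:central_convergence_compactness} to $\tilde R_j=\log^{1/2}(\abs{s}^{-1})R_j$. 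The $W^{1,2}$ harmonic limit is constant on each $C_l$, so its pairing with $\alpha$ vanishes.

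You should make the compactness argument the main proof. The test-function route, as written, assumes what has to be shown. On a fixed compact piece $K$ of the thick part one has $\di R_j=\di\Phi_j$. Your route needs $\norm{\di\Phi_j}_{L^2(K)}=o(\log^{-1/2}(\abs{s}^{-1}))$, i.e.\ that the Dirichlet energy concentrates in the necks. The natural proof of that is the same rescaling: $C^{1,a}_{\rm loc}$ convergence of $\tilde R_j$ to a locally constant limit. The route also carries bookkeeping you gloss over. For instance, $F_s$ is not holomorphic, so $\ddc$ does not commute with transport, and $e_s$ is not supported only in the thin part. It would only add something over the paper if you made the concentration quantitative, which could give an explicit rate.

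When you run the compactness argument, verify the hypotheses of \Cref{prop:central_convergence_compactness}.
\begin{itemize}
\item The one that is not automatic is the local bound and $C^0_{\rm loc}$ convergence of $\tr_{\omega_s}\ddc\tilde R_j$ near points of $X_{0,\rm reg}$. It holds because $\Upsilon^{(l)}_s$ is locally constant there (\Cref{cor:local_constant_of_upsilon}). Hence locally $\ddc\tilde R_j=\log^{1/2}(\abs{s}^{-1})\ddc\Phi_j$, whose trace is a fixed multiple of $\log^{1/2}(\abs{s}^{-1})\lambda_j\Phi_j$. By \Cref{thm:eigenvalue_growth_rate} and the $L^\infty$ bound on $\Phi_j$, this is $O(\log^{-1/2}(\abs{s}^{-1}))$, which tends to $0$ uniformly, so the limit $f_0$ is $0$.
\item The uniform bounds on $\norm{\tilde R_j}_{L^2}$ and $\norm{\di\tilde R_j}_{L^2}$ come from \Cref{thm:approximate_eigenfunctions}(2).
\item The convergence $\int_{X_{s_n}}\tilde R_j\alpha\to\int_{X_0}\tilde R_0\alpha$ is the last bullet of \Cref{prop:central_convergence_compactness}. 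Combine it with $\tilde R_0$ being constant on each $C_l$ and $\int_{C_l}\alpha=0$, rather than citing \Cref{cor:fiber_int_continuity_central_convergence}.
\end{itemize}
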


\subsection{Estimate of the high frequency part}
We prove the first part of \Cref{thm:boundedness_of_preferred_potential_general} regarding the high frequency part of the \emph{preferred potential}. The proof 
is based on Cheng-Li's argument \cite{Cheng_Li1981}
(also seen in Siu's lecture note \cite{Siu_Lectures_On_KE}*{Appendix (A.2)}). 

First, we introduce the following high frequency truncated heat kernel and Green's function on each $(X_s, \omega_s)$:
\begin{align*}
    H^N_s(z_1,z_2,t) &= \sum_{i = N}^{\infty} e^{-\lambda_i(s) t} \Phi_i(s,z_1) \Phi_i(s,z_2) \quad (\text{truncated heat kernel})\\
    G^N_s(z_1,z_2) &=  \sum_{i = N}^{\infty} \frac{1}{\lambda_n(s)} \Phi_i(s,z_1) \Phi_i(s,z_2) \quad (\text{truncated Green's function})
\end{align*}

Then the high frequency part $\vphi_{\rm high}$ in 
\Cref{eqn:defn_of_freq_decomp} can be represented by 
\begin{equation}
    \label{eqn:high_part_by_green}
    \vphi_{\rm high}(z, s) = \int_{z_2 \in X_s} (\tr_{\omega_s}\alpha)(z_2) G^N_s(z, z_2) \omega_s(z_2)
\end{equation}

To obtain the desired bound on $\vphi_{\rm high}$ in \Cref{thm:boundedness_of_preferred_potential_general}, we estimate 
$G^N_s(z_1,z_2)$ using Cheng-Li's argument:
\bproposition
\label[proposition]{prop:bound_truncated_green_func}
There is a uniform constant $C > 0$ independent of $s \in \D^\circ$ 
such that 
\[
G^N_s(z_1, z_2) \geq -C, \quad z_1, z_2 \in X_s,
\]
for all $s \in \D^\circ$.
\eproposition
\begin{proof}
    Let $H_s(z_1, z_2, t)$ be the heat kernel of $(X_s, \omega_s)$,
    then we have 
    \begin{equation}
        \label{eqn:tmp_heat_knl_sum}
        \frac{1}{V(s)} + \sum_{i = 1}^{N-1} e^{-\lambda_i(s) t}\Phi_i(s,z_1) \Phi_i(s,z_2) +  H^N_s(z_1,z_2,t)
        = H_s(z_1, z_2, t) \geq 0,
    \end{equation}
    where $V(s) = \int_{X_s} \omega_s$ is the volume of $X_s$. Since 
    $\omega$ is closed, $V(s) \equiv V$ is constant.

    From the lower bound of higher eigenvalues in \Cref{thm:high_eigenvalues_lowe_bound}, we know that there
    exists $\lambda > 0$ such that
    $\lambda_k(s) \geq \lambda$ for all $k \geq N$. 
    Then 
    \begin{align*}
        \frac{\di }{\di t}H^N_s(z, z, t) = \sum_{i=N}^{\infty}-\lambda_i(s) e^{-\lambda_i(s) t} \abs{\Phi_i(s,z)}^2 \leq -\lambda H^N_s(z,z,t).
    \end{align*}
    By Grönwall's inequality, we have
    \[
    H^N_s(z,z,t) \leq H^N_s(z,z, 1) e^{-\lambda(t-1)}  \quad \text{for } t \geq 1
    \]
    By the Cauchy-Schwarz inequality, we get
    \begin{equation}
        \label{eqn:trunc_heat_exp}
        \begin{split}
        \abs{H^N_s(z_1,z_2,t)} &\leq \sqrt{H^N_s(z_1,z_1,t) \cdot H^N_s(z_2,z_2,t)}\\
        &\leq \sqrt{H^N_s(z_1,z_1,1) \cdot H^N_s(z_2,z_2,1)} e^{-\lambda(t - 1)} \quad \text{for } t \geq 1.
        \end{split}
    \end{equation}

    To estimate the truncated Green's function, we note 
    that 
    \[
    \int_{0}^{\infty} H^N_s(z_1,z_2,t) \di t = G^N_s(z_1,z_2) = \sum_{i = N}^{\infty} \frac{1}{\lambda_i(s)} \Phi_i(s,z_1) \Phi_i(s,z_2).
    \]
    Then 
    {\allowdisplaybreaks
    \begin{align*}
    G^N_s(z_1,z_2) &= \int_0^{1} H^N_s(z_1,z_2,t) \di t + \int_{1}^{\infty} H^N_s(z_1,z_2,t) \di t\\
    &\geq \int_0^1 H^N_s(z_1,z_2,t) \di t -  \sqrt{H^N_s(z_1,z_1,1) \cdot H^N_s(z_2,z_2,1)} \int_{1}^{\infty}e^{-\lambda(t - 1)} \di t\\
    &= \int_0^1 \bigl(H_s(z_1,z_2,t) - \frac{1}{V} - \sum_{i = 1}^{N-1} e^{-\lambda_i(s) t}\Phi_i(s,z_1) \Phi_i(s,z_2)\bigr) \di t - I_2\\
    &(\text{Using }\Cref{eqn:tmp_heat_knl_sum}, \text{and }I_2 \coloneqq \sqrt{H^N_s(z_1,z_1,1) \cdot H^N_s(z_2,z_2,1)} \int_{1}^{\infty}e^{-\lambda (t - 1)} \di t )\\
    &\geq \int_0^1 \bigl(- \frac{1}{V} - \sum_{i = 1}^{N-1} e^{-\lambda_i(s) t}\Phi_i(s,z_1) \Phi_i(s,z_2)\bigr) \di t - I_2\\
    &(\text{Using the fact that the heat kernel } H_s(z_1,z_2,t) \geq 0)
    \\
    &\geq -\frac{1}{V} - \sum_{i=1}^{N-1} \norm{\Phi_i(s,z)}^2_{L^\infty(X_s)} - 
    I_2\\
    &\geq - (\frac{1}{V} + I_1 + I_2)
\end{align*}}
Here 
\begin{align*}
    I_1 &= \sum_{i=1}^{N-1} \norm{\Phi_i(s,z)}^2_{L^\infty(X_s)} \geq 0\\
    I_2 &= \sqrt{H^N_s(z_1,z_1,1) \cdot H^N_s(z_2,z_2,0.4)} \int_{1}^{\infty}e^{-\lambda(t - 1)} \di t \geq 0.
\end{align*}

It remains to control $I_1, I_2$ by some uniform constants that are 
independent of $s$.

We begin with basic results which 
relate the Sobolev constants to the heat kernel and 
eigenfunction bounds.

\btheorem[Theorem 2.1, \cite{Yoshikawa1997}]\label{thm:heat-kernel-upper-bound} For a manifold \((M, g)\) of real dimension $2$, the following two inequalities are equivalent:
\begin{enumerate}
    \item For \(0 < t \leq 1\) and \((x,y) \in M \times M\),
    \[
        H(x,y,t) \leq C_1 t^{-2}
    \]

    \item For every \(f \in C^\infty(M)\),
    \[
        \norm{f}_{L^4} \leq C_2 (\norm{\di f}_{L^2}+ \norm{f}_{L^2})
    \]
\end{enumerate}
Here \(C_1\) and \(C_2\) depend continuously on each other.
\etheorem

By classical Moser iteration, we have
\btheorem [\cite{Petersen2016}*{Theorem 9.2.7}]
\label{thm:eigenfunctions_bounds_sobolev}
Let \((M,g)\) be a compact Riemannian manifold, with the following Sobolev inequality
\[
    \norm{u}_{2\nu} \leq S \norm{\nabla u}_2 + \norm{u}_2
\]
where \(\nu > 1\). Then for \(\Delta u = -\lambda u\), we have
\[
    \norm{u}_{\infty} \leq \exp(\frac{S\sqrt{\nu\lambda}}{\sqrt{\nu} - 1}) \norm{u}_2.
\]

Specifically, if $\nu = 2$ in the above Sobolev inequality, and $\norm{u}_2 = 
1$, then for $\Delta u = - \lambda u$, we have 
\[
  \norm{u}_{\infty} \leq \exp(\frac{S\sqrt{2\lambda}}{\sqrt{2} - 1})  
\] 
\etheorem

Finally, we introduce the uniform Sobolev inequality for a \Kahler family.
\btheorem [\cite{Tosatti2010}*{Lemma 3.2}]
\label{thm:uniform_sobolev_projective_family}
Let \(\pi:(X, \omega) \to \D\) be a \Kahler family, i.e., \(\pi\) is a proper, surjective holomorphic map from a \Kahler manifold \((X, \omega)\) to the unit disk \(\D\). Suppose
that $X_0 \coloneqq \pi^{-1}(0)$ is the unique singular fiber.

Assume \(\dim_\C X_s = 1\). Then for \(0 < \abs{s} < 0.5\), we have a uniform Sobolev constant $C$ for \((X_s,\omega_s)\), where \(\omega_s = \omega|_{X_s}\):
\[
    \norm{f}_{L^4(X_s)} \leq C(\norm{\di f}_{L^2} + \norm{f}_{L^2}) \quad \forall f \in C^\infty(X_s)
\]
\etheorem

So $I_1$ is bounded uniformly by \Cref{thm:uniform_sobolev_projective_family} and \Cref{thm:eigenfunctions_bounds_sobolev}. And $I_2$ is bounded uniformly
by \Cref{thm:uniform_sobolev_projective_family} and \Cref{thm:heat-kernel-upper-bound}. This completes the proof.
\end{proof}

As a corollary, we can show the first part of \Cref{thm:boundedness_of_preferred_potential_general}.
\bcorollary
Under the settings and notation of \Cref{thm:boundedness_of_preferred_potential_general},
we have 
\[
    \norm{\vphi_{\rm high}}_{L^\infty(X_s)}
    \leq C \norm{\tr_{\omega_s} \alpha}_{L^\infty(X_s)}
\]
for all $s \neq 0$. The constant $C$ only depends
on the geometry of the degeneration.
\ecorollary
\begin{proof}
    By \Cref{eqn:high_part_by_green}, we have 
    \begin{align*}
        \vphi_{\rm high}(z, s) &= \int_{z_2 \in X_s} (\tr_{\omega_s}\alpha)(z_2) G^N_s(z, z_2) \omega_s(z_2)\\
        &=\int_{z_2 \in X_s} (\tr_{\omega_s}\alpha)(z_2) (G^N_s(z, z_2) + C') \omega_s(z_2),
    \end{align*}
    where we take the constant $C'$ as in \Cref{prop:bound_truncated_green_func} such that 
    $G^N_s(z, z_2) + C' \geq 0$.

    So 
    \begin{align*}
        \vphi_{\rm high}(z, s) &\leq \int_{z_2 \in X_s} \abs{(\tr_{\omega_s}\alpha)(z_2)} (G^N_s(z, z_2) + C') \omega_s(z_2)\\
        &\leq \int_{z_2 \in X_s} 
        \norm{\tr_{\omega_s} \alpha}_{L^\infty(X_s)} (G^N_s(z, z_2) + C') \omega_s(z_2)\\
        &=C' V \norm{\tr_{\omega_s} \alpha}_{L^\infty(X_s)}.
    \end{align*}
    Similarly, we have $\vphi_{\rm high}(z, s) \geq -C' V\norm{\tr_{\omega_s} \alpha}_{L^\infty(X_s)}$, where $V$ is the volume of 
    $(X_s, \omega_s)$. 

    To summarize, we have $\norm{\vphi_{\rm high}}_{L^\infty(X_s)}
    \leq C \norm{\tr_{\omega_s} \alpha}_{L^\infty(X_s)}$.
\end{proof}

\vone
\subsection{Estimate of the low frequency (I)}
\label{section:low_freq_estimate_part_1}
We prove the second part of \Cref{thm:boundedness_of_preferred_potential_general} here.

First, we 
note that 
\[
    \vphi_{\rm low} = \sum_{i=1}^{N-1} c_i(s) \Phi_i(s, z)
\]
and the $\Phi_i(s, z)$ are uniformly bounded by \Cref{thm:uniform_sobolev_projective_family} and \Cref{thm:eigenfunctions_bounds_sobolev}. 
So it suffices to estimate 
$c_i(s)$. 
Recall that $c_i = \dfrac{b_i}{\lambda_i}$ and that the small eigenvalues $\lambda_i(s)$ are well-studied by Dai-Yoshikawa in \Cref{thm:eigenvalue_growth_rate}.
It remains to determine the asymptotic properties of 
\[
    b_i(s) = \int_{X_s} \Phi_i(s, z) \alpha
\]
around $0$ for all $1 \leq i \leq N$. 

Our tool is the decomposition
of $\Phi_i$ in \Cref{thm:approximate_eigenfunctions}. We recall
the results here: For each $1 \leq i \leq N-1$, we have 
\[
    \Phi_i(s, z) = \sum_{l=1}^{N}C_{li}(s) \Upsilon^{(l)}(s, z)
    + R_i (s, z).
\]
And 
\begin{itemize}
    \item There is a uniform constant $C$ such that $\abs{C_{li}(s)}
        < C$ for all $1 \leq l \leq N$ and all $s$.
    \item The model functions $\Upsilon^{(l)}(s, z)$ 
    are discussed in \Cref{prop:good_test_on_X_s}.
    \item The remainder terms $R_i$ are discussed in \Cref{thm:approximate_eigenfunctions}.
\end{itemize}
Then each $b_i$ is decomposed into 
\begin{equation}
    \label{eqn:b_i-decomposition}
    b_i(s) = \sum_{l=1}^N C_{li}(s) \int_{X_s} \Upsilon^{(l)}_s \alpha
    + \int_{X_s} R_i(s, z) \alpha.
\end{equation}

We have the following lemma
\blemma
\label[lemma]{lem:integral_of_model_function_with_forms}
Under the preceding notation. For any smooth
$(1, 1)$-form $\alpha$ on $X$,
\begin{equation*}
    \int_{X_s} \Upsilon^{(l)}_s \alpha = 
    \frac{1}{\sqrt{\mathrm{Area}(C_l)}}\bigl(\int_{C_l} \alpha +
    \norm{\alpha}_{\infty} O(\abs{s}^{\frac{1}{4\nu}})\bigr).
\end{equation*}
\elemma 
\bproof(of \Cref{lem:integral_of_model_function_with_forms}) From the construction of $\Upsilon^{(l)}_s$ in \Cref{prop:good_test_on_X_s}, it follows that 
\[
\int_{X_s} \Upsilon^{(l)}_s \alpha
= \int_{X_0} 
\frac{\chi_{\epsilon(s)}^{(l)} F_s^* \alpha}{\sqrt{\mathrm{Area}(C_l)}},
\]
where $\epsilon(s) = 2 \abs{s}^{\frac{1}{8 \nu}}$.

Since $\alpha$ is a smooth form on $X$, by 
\Cref{thm:dai-yoshikawa-package}(4) and \Cref{rmk:of_dai_yoshikawa_package}, we have 
\[
\norm{F_s^* \alpha - \alpha_0}_{L^\infty(X_0 \setminus \cup_{p \in \Sing X_0} B(p, \epsilon(s)) )} 
\leq K_2 \abs{s}^{\frac{1}{2}}
\]
for a uniform constant $K_2 > 0$. So 
\begin{equation}
    \label{eqn:tmp_1-lem3-11}
    \abs{\int_{X_s} \Upsilon^{(l)}_s \alpha - 
\int_{X_0} 
\frac{\chi_{\epsilon(s)}^{(l)} \alpha}{\sqrt{\mathrm{Area}(C_l)}}}
\leq K_2 \sqrt{\mathrm{Area}(C_l)} \abs{s}^{\frac{1}{2}}.
\end{equation}

Since $\chi_{\epsilon(s)}^{(l)}$ is compactly
supported on $C_l$, $\abs{\chi_{\epsilon(s)}^{(l)} - 1}$ is bounded by $2$ and it vanishes on $C_l$ except on a set of measure $O(\abs{s}^{\frac{1}{4\nu}})$ 
(by \Cref{lem:good_test_central})
, we have 
\begin{equation}
    \label{eqn:tmp_2-lem3-11}
    \abs{\int_{X_0} 
\frac{\chi_{\epsilon(s)}^{(l)} \alpha}{\sqrt{\mathrm{Area}(C_l)}} - 
\int_{C_l} 
\frac{\alpha}{\sqrt{\mathrm{Area}(C_l)}}
} = 
\norm{\alpha}_{\infty}O(\abs{s}^{\frac{1}{4\nu}}).
\end{equation}

Combining \Cref{eqn:tmp_1-lem3-11} and 
\Cref{eqn:tmp_2-lem3-11}, we obtain 
\[
\abs{\int_{X_s} \Upsilon^{(l)}_s \alpha - 
    \frac{1}{\sqrt{\mathrm{Area}(C_l)}}\int_{C_l} \alpha} =\norm{\alpha}_{\infty}O(\abs{s}^{\frac{1}{4\nu}}).
\]  
\eproof

On the other hand, from \Cref{thm:approximate_eigenfunctions}(2), we have 
\[
    \abs{\int_{X_s} R_i(s, z) \alpha} 
    = \norm{\tr_\omega \alpha}_{L^2(X_s)} 
    O(\log^{-\frac{1}{2}}(\abs{s}^{-1})).
\]

Putting all these asymptotic results into \Cref{eqn:b_i-decomposition}, we have
\begin{equation}
    \label{eqn:b_i-asymptotic}
    \begin{split}
    b_i(s) &= \sum_{l=1}^N \frac{C_{li}(s)}{
        \sqrt{\mathrm{Area}(C_l)}
    } \int_{C_l} \alpha
    + \sum_{l=1}^N \frac{C_{li}(s)}{
        \sqrt{\mathrm{Area}(C_l)}
    }  O(\abs{s}^{\frac{1}{4 \nu}})\\
    &+\norm{\tr_\omega \alpha}_{L^2(X_s)} 
    O(\log^{-\frac{1}{2}}(\abs{s}^{-1}))\\
    &= \sum_{l=1}^N \frac{C_{li}(s)}{
        \sqrt{\mathrm{Area}(C_l)}
    }\int_{C_l} \alpha  + O(\log^{-\frac{1}{2}}(\abs{s}^{-1}))\\
    &= O(1) \norm{\tr_{\omega_0}\alpha}_{L^1(X_0)} + O(\log^{-\frac{1}{2}}(\abs{s}^{-1})) \norm{\tr_\omega \alpha}_{L^2(X_s)}.
\end{split}
\end{equation}

We then prove the second part of \Cref{thm:boundedness_of_preferred_potential_general}.
\bproof(of \Cref{thm:boundedness_of_preferred_potential_general}.(2))
For $\alpha$ whose integral is zero on general fibers, following \Cref{eqn:b_i-asymptotic}, we have 
$b_i(s) = O(\log^{-\frac{1}{2}}(\abs{s}^{-1})) \norm{\tr_\omega \alpha}_{L^2(X_s)}.$

Since $\vphi_{\rm low} = \sum_{i=1}^{N-1} c_i(s) \Phi_i(s, z)$ and $c_i = b_i/\lambda_i$, we have 
\[
c_i(s) = O(\log(\abs{s}^{-1}))\norm{\tr_{\omega_0}\alpha}_{L^1(X_0)} + O(\log^{\frac{1}{2}}(\abs{s}^{-1}))\norm{\tr_\omega \alpha}_{L^2(X_s)}.
\]

Since $\Phi_i(s, z)$ is uniformly bounded for $1 
\leq i \leq N - 1$, we have 
\[
    \norm{\vphi_{\rm low}}_{L^\infty(X_s)}
    \leq C \norm{\tr_{\omega_0}\alpha}_{L^1(X_0)}
    \log(\abs{s}^{-1}).
\]
\eproof

\subsection{Estimate of the low frequency (II)}
If $\alpha$ additionally satisfies \Cref{condition:integrability_singular_fibers} on the singular fiber (i.e., $\alpha$ has zero integral on each irreducible component of the singular fiber), we obtain a better estimate on its preferred potential (\Cref{thm:preferred_potential_integrability_singular_fiber}).
We prove the theorem here.

Following the same strategy as in \Cref{section:low_freq_estimate_part_1}, it suffices 
to estimate each term in the expansion \Cref{eqn:b_i-decomposition} of $b_i(s)$:
\[
b_i(s) = \sum_{l=1}^N C_{li}(s) \int_{X_s} \Upsilon^{(l)}_s \alpha
    + \int_{X_s} R_i(s, z) \alpha.
\]

The first part is good enough when $\alpha$ has 
zero integral on each irreducible component of the singular fiber as proved
in \Cref{lem:integral_of_model_function_with_forms}, and we have 
\[
\sum_{l=1}^N C_{li}(s) \int_{X_s} \Upsilon^{(l)}_s \alpha = O(\abs{s}^{\frac{1}{4 \nu}}).
\]

It remains to estimate $\int_{X_s} R_i(s, z) \alpha$ 
under conditions \Cref{condition:integrability_singular_fibers}, which
is the following proposition.

\bproposition
For $s \neq 0$, we define $\tilde{R}_i \coloneqq
\log^{\frac{1}{2}}(\abs{s}^{-1}) R_i$. If $\alpha$
is a smooth $(1, 1)$-form on $X$ with \Cref{condition:integrability_singular_fibers}, we then have  
\[
    \log^{\frac{1}{2}}(\abs{s}^{-1})\abs{\int_{X_s} R_i(s, z) \alpha} 
    = \abs{\int_{X_s} \tilde{R}_i(s, z) \alpha} \to 0
    \quad \text{as } s \to 0.
\]
\eproposition
\begin{proof}
    We prove it by contradiction. Suppose we have a sequence 
    $s_n \to 0$ such that 
    $\abs{\int_{X_{s_n}} \tilde{R}_i(s_n, z) \alpha} \geq \epsilon_0$
    for some $\epsilon_{0} > 0$.

    First, we check that the family of functions
    $\tilde{R}_{i, s}(z) = \tilde{R}_i(s, z) \in C^\infty(X_s)$ satisfies the
    conditions in \Cref{prop:central_convergence_compactness}. We denote
    $\ddc_z \tilde{R}_{i, s}(z)$ by $\beta_s$.
    \begin{itemize}
        \item For the first condition in \Cref{prop:central_convergence_compactness}, we use the 
        decomposition formula for eigenfunctions $\Phi_i$ in 
        \Cref{eqn:eigenfunctions_correlation}:
        \[\Phi_i(s, z) = \sum_{l=1}^{N}C_{l(i+1)}(s) \Upsilon^{(l)}(s, z)
        + R_i (s, z).\]
        Around any $x \in X_{0, \rm reg}$, we can take an open neighborhood
        $U$ of $x$ such that $\Upsilon^{(l)}(s, z)$
        is constant on $U$ by \Cref{cor:local_constant_of_upsilon}.
        Then on $U \cap X_s$, we have 
        \[
        \ddc_z \tilde{R}_{i, s}(z) = \beta_s = \ddc_z \Phi_i(s, z)
        = -\lambda_i(s) \Phi_i(s, z) \omega_s.
        \]
        So $\norm{\tr_{\omega_s}\beta_s}_{L^\infty(U \cap X_s)}
        = \norm{\lambda_i(s) \Phi_i(s, z)}_{L^\infty(U \cap X_s)}$
        is uniformly bounded in $s$.
        \item For the second condition in \Cref{prop:central_convergence_compactness}, we note that 
        $$\norm{\tr_{\omega_s}\beta_s}_{C^0(U \cap X_s)} = 
        \lambda_i(s) \norm{\Phi_i}_{C^0(U \cap X_s)} \to 0.$$
        So $\tr_{\omega_s}\beta_s \to 0 = f_0$ in $C^{0}_{\rm loc}$
        on $X_{0, \rm reg}$.
        \item For the third condition in \Cref{prop:central_convergence_compactness}, it is proved in 
        \Cref{thm:approximate_eigenfunctions}(3) that there is 
        a uniform constant $C > 0$ such that 
        \[
            \norm{\di R_j}^2_{L^2(X_s)} \leq \frac{C^2}{\log(\abs{s}^{-1})},
            \quad
            \norm{R_j}^2_{L^2(X_s)} \leq C^2 \frac{1}{\log(\abs{s}^{-1})}.
        \]
        for all $1 \leq j \leq N$ and all $s$. So 
        \[
        \norm{\tilde{R}_{i, s}(z)}_{L^2(X_s, \omega_s)}, 
        \norm{\di \tilde{R}_{i, s}(z)}_{L^2(X_s, \omega_s)} \leq C^2.
        \]
    \end{itemize}

    We then apply \Cref{prop:central_convergence_compactness} 
    to the family of functions $\tilde{R}_{i, s}(z)$ and sequence
    $s_n \to 0$, we then have a subsequence $s_{k_n}$ such that 
    \begin{itemize}
        \item $\tilde{R}_{i,s_{k_n}}(z)$ converges to $\tilde{R}_{0}(z)$
        in $C^{1, a}_{\rm loc}$ on $X_{0, \rm reg}$ for any 
        $0 < a < 1$,
        \item $\tilde{R}_{0}(z) \in W^{2, p}_{\rm loc}$ for all $1 < p < \infty$ 
        and it solves $-\Delta_{\omega_0} \tilde{R}_{0}(z) = f_0 = 0$ a.e. on $X_{0, \rm reg}$,
        \item $\tilde{R}_{0}(z) \in W^{1, 2}
        (X_{0, \rm reg})$,
        \item $\int_{X_{s_{k_n}}} \tilde{R}_{i,s_{k_n}}(z) \alpha 
        \to \int_{X_{0}} \tilde{R}_{0}(z) \alpha.$
    \end{itemize}

    Since $\tilde{R}_{0}(z) \in W^{1, 2}
    (X_0)$ and it is harmonic on $X_{0, \rm reg}$, it is locally constant 
    and is constant on each irreducible component
    of $X_0$. So we have $\int_{X_{0}} \tilde{R}_{0}(z) \alpha = 0$. 
    Thus the initial assumption $\abs{\int_{X_{s_n}} \tilde{R}_i(s_n, z) \alpha} \geq \epsilon_0$
    for all $n$ and an $\epsilon_{0} > 0$ contradicts that  
    $$\int_{X_{s_{k_n}}} \tilde{R}_{i,s_{k_n}}(z) \alpha 
        \to \int_{X_{0}} \tilde{R}_{0}(z) \alpha = 0.$$ 
    
    This completes the proof.
\end{proof}

As a direct consequence, we can prove \Cref{thm:preferred_potential_integrability_singular_fiber}
\begin{proof}(of \Cref{thm:preferred_potential_integrability_singular_fiber})
    Following previous discussion, we have 
    \[
    b_i(s) = O(\abs{s}^{\frac{1}{4\nu}}) + 
    o(\log^{-\frac{1}{2}}(\abs{s}^{-1})).
    \]
    So $c_i(s) = \frac{b_i(s)}{\lambda_i(s)} = o(\log^{\frac{1}{2}}(\abs{s}^{-1}))$ for $1 \leq i \leq N - 1$. It implies that 
    \[
        \frac{\norm{\vphi_{\rm low}}_{L^\infty(X_s)}}{\log^{1/2}(\abs{s}^{-1})} \to 0, \quad
    \text{as } s \to 0.
    \]
\end{proof}

\section{Asymptotics of Archimedean height pairing}
\label{section:4-Archimedean-pairing}
As an application of previous estimates,
we introduce \emph{the Archimedean height pairing for 
smooth real $(1,1)$-forms on $X$}. We continue to use the setup 
from \Cref{setup:degeneration_with_reduced_fiber}, except in 
\Cref{cor:continuity_of_pairing_general_fiber}, where we allow 
for general singular fibers of the degeneration.

\begin{definition}[Archimedean height pairing]
    \label{dfn:pairing_of_forms}
    Let $\alpha, \beta$ be two smooth real $(1, 1)$-forms 
    on $X$ with $\int_{X_s} \alpha = \int_{X_s} \beta = 0$
    for $s \neq 0$ (\Cref{condition:integrability_general_fibers}). Let $\vphi$ be the preferred potential 
    of $\alpha$. We define \emph{the Archimedean height pairing}
    $\la \alpha, \beta \ra$ to be a function on $\D^\circ$
    \[
    \la \alpha, \beta \ra(s) \coloneqq \int_{X_s} \vphi \beta,
    \quad s \in \D^\circ.
    \]
    It is smooth on $\D^\circ$ since $\pi\colon X^\circ(\coloneqq 
    X \setminus X_0) 
    \to \D^\circ$ is a submersion and $\vphi$ is smooth. 
\end{definition}
\begin{remark}
    We can use any potential for $\alpha$ on $X_s$ to define the 
    Archimedean height pairing. Actually, for any potential $-\vphi'$ 
    of $\alpha$ on $X_s$(i.e., $\ddc \vphi' + \alpha = 0$ on 
    $X_s$), the difference $\vphi' - \vphi$ is constant on $X_s$, where $\vphi$
    is the preferred potential of $\alpha$. So $\int_{X_s}(\vphi' - \vphi)
    \beta = \mathrm{constant} \cdot \int_{X_s} \beta
    = 0$ and $\int_{X_s} \vphi' \beta =\int_{X_s} \vphi \beta =  
    \la \alpha, \beta \ra(s)$.
\end{remark}
We now state some basic properties of the Archimedean height pairing defined above.
\begin{proposition}
    \label[proposition]{prop:properties_of_pairing}
    Let $\alpha, \beta$ be two forms 
    as in \Cref{dfn:pairing_of_forms}. We have
    \begin{enumerate}
        \item The Archimedean height pairing
        is symmetric and bilinear.
        \item The Archimedean height pairing is 
        positive definite, i.e.,
        $\la \alpha, \alpha \ra$ is a non-negative
        function on $\D^\circ$, and 
        $\la \alpha, \alpha \ra 
        = 0$ if and only if $\alpha\mid_{X_s} = 0$
        for $s \neq 0$.
    \end{enumerate}
\end{proposition}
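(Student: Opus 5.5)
The argument is fiberwise: fix $s \neq 0$ and work on the compact Riemann surface $(X_s,\omega_s)$. Let $\vphi_s,\psi_s$ be the preferred potentials of $\alpha$ and $\beta$ (\Cref{dfn:preferred_potential}). Then $\alpha|_{X_s} = -\ddc\vphi_s$ and $\beta|_{X_s} = -\ddc\psi_s$, and both $\vphi_s$ and $\psi_s$ are smooth.

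\emph{Bilinearity.} The preferred potential is unique and is characterized by two linear conditions: the equation $\alpha + \ddc\vphi = 0$ on $X_s$ and the normalization $\int_{X_s}\vphi\,\omega_s = 0$. Hence the map $\alpha\mapsto\vphi$ is linear. It follows that $\la\alpha,\beta\ra(s) = \int_{X_s}\vphi_s\beta$ is linear in both arguments.

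\emph{Symmetry.} Integrating by parts twice on the closed surface $X_s$ gives
\[
\la\alpha,\beta\ra(s) = -\int_{X_s}\vphi_s\,\ddc\psi_s = -\int_{X_s}\psi_s\,\ddc\vphi_s = \int_{X_s}\psi_s\,\alpha = \la\beta,\alpha\ra(s).
\]
No boundary terms appear because $X_s$ is compact without boundary.

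\emph{Positivity.} Take $\beta=\alpha$ and integrate by parts once:
\[
\la\alpha,\alpha\ra(s) = -\int_{X_s}\vphi_s\,\ddc\vphi_s = \int_{X_s}\di\vphi_s\wedge \di^c\vphi_s = \frac{1}{2\pi}\int_{X_s} i\,\pa\vphi_s\wedge\dbar\vphi_s .
\]
For real $\vphi_s$ the last form is a nonnegative multiple of the area form, namely a constant times $\abs{\di\vphi_s}^2_{\omega_s}\omega_s$. Therefore $\la\alpha,\alpha\ra(s)\ge 0$.

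\emph{Equality case.} Suppose $\la\alpha,\alpha\ra(s)=0$. Then $\di\vphi_s\equiv 0$, so $\vphi_s$ is constant on the connected fiber $X_s$. The normalization $\int_{X_s}\vphi_s\,\omega_s=0$ then forces $\vphi_s=0$, and hence $\alpha|_{X_s} = -\ddc\vphi_s = 0$. Conversely, if $\alpha|_{X_s}=0$, uniqueness of the preferred potential gives $\vphi_s=0$, so the pairing vanishes. Applying this for every $s\neq 0$ gives the stated equivalence.

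The only step needing care is fixing the sign and constant in $-\vphi\,\ddc\vphi \sim \di\vphi\wedge\di^c\vphi$ with the paper's convention $\ddc=\frac{i}{2\pi}\pa\dbar$. One checks this in a local coordinate: there $i\,\pa\vphi\wedge\dbar\vphi = \abs{\vphi_z}^2\, i\,\di z\wedge \di\bar z \ge 0$.
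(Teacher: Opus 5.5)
Your proposal is correct and follows essentially the same route as the paper: linearity of $\alpha \mapsto \varphi$ gives bilinearity, fiberwise integration by parts on the closed surface $X_s$ gives symmetry, and $\langle \alpha,\alpha\rangle(s) = \int_{X_s} d\varphi_s \wedge d^c\varphi_s$ (the Dirichlet energy of $\varphi_s$) gives positivity, with the equality case coming from $\varphi_s$ being constant on the connected fiber. You also check the sign and constant under the convention $dd^c = \frac{i}{2\pi}\partial\bar\partial$ and prove the converse implication, both of which the paper leaves implicit.
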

\begin{proof}\mbox{}
    \begin{enumerate}
        \item Since the preferred potentials depend
        linearly on the differential forms, we have 
        that the 
        Archimedean height pairing is bilinear. It is symmetric
        because \(
        \la \alpha, \beta \ra(s) = \int_{X_s}\vphi (-\ddc \vphi')
        =  \int_{X_s}\vphi' (-\ddc \vphi) = \la \beta, \alpha \ra(s),
        \)
        where $\vphi, \vphi'$ are preferred potentials for $\alpha,
        \beta$ respectively.
        \item We have 
        $\la \alpha, \alpha \ra(s)  = \int_{X_s}\vphi (-\ddc \vphi)
        = \norm{\di \vphi}^2_{L^2(X_s)} \geq 0$.
        If $\la \alpha, \alpha \ra = 0$, then 
        the preferred potential $\vphi$ of $\alpha$
        is fiberwise constant, so $\alpha|_{X_s}
        = 0$ for $s \neq 0$.
    \end{enumerate}
\end{proof}

\begin{theorem}[Continuity of pairing]
    \label{thm:continuity_of_pairing_zero_singular_integral}
    Let $\alpha, \beta$ be two smooth real $(1, 1)$-forms on $X$ with \Cref{condition:integrability_general_fibers} satisfying 
    \[
        \int_{C_i} \alpha = \int_{C_i} \beta = 0,
    \]
    where the $C_i$ are the irreducible components
    of the reduced singular fiber $X_0$ (\Cref{condition:integrability_singular_fibers}).

    Then the pairing function $\la \alpha, \beta \ra \in C^0(\D^\circ)$
    (in \Cref{dfn:pairing_of_forms})
    can be continuously extended to $\D$.
\end{theorem}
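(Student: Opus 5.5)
The plan is to write $\la \alpha, \beta \ra(s) = \int_{X_s}\vphi_s \beta$, with $\vphi$ the preferred potential of $\alpha$, and split $\vphi = \vphi_{\rm low} + \vphi_{\rm high}$ as in \Cref{eqn:defn_of_freq_decomp}. We then treat the two resulting fiber integrals separately.

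\textbf{The high frequency part.} We aim to apply \Cref{cor:fiber_int_continuity_central_convergence} to the family $\vphi_{\rm high}|_{X_s}$ with the form $\beta$; by hypothesis $\beta$ has zero integral on each $C_i$. We check the three hypotheses of \Cref{prop:central_convergence_compactness} in turn.
\begin{enumerate}
    \item By \Cref{thm:boundedness_of_preferred_potential_general}(1), $\norm{\vphi_{\rm high}}_{L^\infty(X_s)} \leq C\norm{\tr_{\omega_s}\alpha}_{L^\infty}$, which is uniformly bounded since $\alpha$ is smooth on $X$. This gives uniform $L^2$ bounds.
    \item Since $\vphi_{\rm high}$ is orthogonal to the low modes, $\norm{\di \vphi_{\rm high}}_{L^2}^2 = (-\Delta \vphi_{\rm high}, \vphi_{\rm high}) = \int_{X_s} \vphi_{\rm high}\,\alpha$, which is bounded by the $L^\infty$ bound.
    \item For the Laplacian, $\ddc \vphi_{\rm high} = -\alpha|_{X_s} - \ddc\vphi_{\rm low}$ and $\ddc \vphi_{\rm low} = -\sum_i c_i\lambda_i\Phi_i\,\omega_s = -\sum_i b_i \Phi_i\,\omega_s$. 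So $\tr_{\omega_s}\ddc\vphi_{\rm high} = -\tr_{\omega_s}\alpha + \sum_i b_i\Phi_i$.
\end{enumerate}
By \Cref{eqn:b_i-asymptotic} together with \Cref{lem:integral_of_model_function_with_forms} and the preceding proposition, $b_i(s) = o(\log^{-1/2}(\abs{s}^{-1})) \to 0$. Moreover the $\Phi_i$ are uniformly bounded by \Cref{thm:uniform_sobolev_projective_family} and \Cref{thm:eigenfunctions_bounds_sobolev}. Hence the trace is locally uniformly bounded and converges in $C^0_{\rm loc}$ on $X_{0,\rm reg}$ to $-\tr_{\omega_0}\alpha|_{X_0}$; here we use that $\tr_{\omega_s}\alpha$ converges, since $\alpha$ and $\omega$ are smooth and the fibration is locally trivial near $X_{0,\rm reg}$. \Cref{cor:fiber_int_continuity_central_convergence} then gives continuity at $0$ of $\int_{X_s}\vphi_{\rm high}\beta$.

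\textbf{The low frequency part.} We have $\int_{X_s}\vphi_{\rm low}\beta = \sum_{i=1}^{N-1} c_i(s)\, b_i^\beta(s)$, where $b_i^\beta(s) = \int_{X_s}\Phi_i\beta$ and $c_i = b_i^\alpha/\lambda_i$. Applying the estimates of the previous subsection to both $\alpha$ and $\beta$, which both satisfy \Cref{condition:integrability_singular_fibers}, gives $b_i^\alpha, b_i^\beta = o(\log^{-1/2}(\abs{s}^{-1}))$. Combined with $\lambda_i(s) \geq C_0/\log(\abs{s}^{-1})$ from \Cref{thm:eigenvalue_growth_rate}, each term is
\[
c_i(s)\, b_i^\beta(s) = \frac{o(\log^{-1/2}(\abs{s}^{-1}))\cdot o(\log^{-1/2}(\abs{s}^{-1}))}{C_0\log^{-1}(\abs{s}^{-1})} = o(1).
\]
So the low frequency contribution tends to $0$. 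The issue of choosing eigenfunctions does not matter, since the sum is basis independent. Continuity away from $0$ holds because the pairing is smooth on $\D^\circ$, so the extension is continuous on $\D$ with value $\int_{X_0}\vphi_0\beta$ at $0$.

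\textbf{Main obstacle.} The delicate step is the low frequency term. The $o(\log^{-1/2})$ decay of $b_i$ on \emph{both} sides is exactly what balances the $\log$ blow-up of $1/\lambda_i$, and this rests on the compactness argument for $\tilde R_i$ in the proposition preceding \Cref{thm:preferred_potential_integrability_singular_fiber}. A secondary check is that $\tr_{\omega_s}\alpha$ converges locally in the trivializing charts; this is routine.
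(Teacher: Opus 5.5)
Your proposal is correct and follows the paper's proof: split the preferred potential into low- and high-frequency parts, handle $\int_{X_s}\vphi_{\rm high}\beta$ via \Cref{cor:fiber_int_continuity_central_convergence} after checking the hypotheses of \Cref{prop:central_convergence_compactness}, and show the low-frequency contribution is $o(1)$ using the $o(\log^{-1/2}(\abs{s}^{-1}))$ decay from the integrability condition on the singular fiber. The only cosmetic difference is in the low-frequency term: you write it directly as $\sum_i b_i^\alpha b_i^\beta/\lambda_i$, whereas the paper writes it as $\int_{X_s}\rho\,\ddc\vphi_{\rm low}$, with $\rho$ the preferred potential of $\beta$, and uses $\rho = o(\log^{1/2}(\abs{s}^{-1}))$ together with $\norm{\Delta_{\omega_s}\vphi_{\rm low}}_{L^\infty} = o(\log^{-1/2}(\abs{s}^{-1}))$; the two expressions coincide because $\rho_{\rm high}$ is orthogonal to the low modes.
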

\begin{proof}(of \Cref{thm:continuity_of_pairing_zero_singular_integral})
    Let $\vphi, \rho \in C^\infty(X\setminus X_0)$ be preferred potentials of $\alpha$ and $\beta$ respectively.

    By \Cref{lem:proj_onto_small_freq_is_smooth} and shrinking 
    the base, we decompose $\vphi$ into 
    its low frequency part $\vphi_{\rm low} \in C^\infty(X\setminus X_0)$
    and high frequency part 
    $\vphi_{\rm high} \in C^\infty(X\setminus X_0)$. Then 
    \begin{align*}
        \la \alpha, \beta\ra(s)
        &=-\int_{X_s} (\vphi_{\rm low} + \vphi_{\rm high}) \ddc \rho \\
        &=-\int_{X_s} \rho \ddc \vphi_{\rm low} - 
        \int_{X_s}\vphi_{\rm high} \beta
    \end{align*}

    Using the asymptotic results from \Cref{thm:preferred_potential_integrability_singular_fiber}, we have 
    $\rho = o(\log^{\frac{1}{2}}(\abs{s}^{-1}))$ and 
    $\vphi_{\rm low}= o(\log^{\frac{1}{2}}(\abs{s}^{-1}))$.
    Since $\vphi_{\rm low}$ is in the space spanned by 
    eigenfunctions with low frequency, we have 
    \[
    \norm{\tr_{\omega_s} \ddc \vphi_{\rm low}}_{L^\infty(X_s)}
    = \norm{\Delta_{\omega_s} \vphi_{\rm low}}_{L^\infty(X_s)}
    = o(\log^{-\frac{1}{2}}(\abs{s}^{-1})).
    \]
    So $\int_{X_s} \rho \ddc \vphi_{\rm low} = o(1)$ and it is 
    continuous at $0$.

    To show that $\int_{X_s}\vphi_{\rm high} \beta$ can be continuously extended to $\D$, we use \Cref{cor:fiber_int_continuity_central_convergence}. It suffices 
    to check that $\vphi_{\rm high, s}\in C^\infty(X_s)$
    satisfies the conditions in \Cref{prop:central_convergence_compactness}. Note that 
    \[
    \ddc_z \vphi_{\rm high, s}(z)
    = -\alpha - \ddc_z \vphi_{\rm low, s}(z)
    \]
    \begin{itemize}
        \item For the first condition, we have 
        \begin{align*}
            \norm{\tr_{\omega_s}\ddc_z \vphi_{\rm high, s}(z)}_{L^\infty
            (X_s)}
            &\leq 
            \norm{\tr_{\omega_s}\alpha}_{L^\infty(X_s)}
            + \norm{\tr_{\omega_s}\ddc_z \vphi_{\rm low, s}}_{L^\infty(
                X_s
            )}\\
            &\leq \norm{\tr_{\omega_s}\alpha}_{L^\infty(X_s)}
            + C\log^{-1}(\abs{s}^{-1}) \norm{\vphi_{\rm low, s}}_{L^\infty
            (X_s)}\\
            &\leq \norm{\tr_{\omega_s}\alpha}_{L^\infty(X_s)} + 
            o(\log^{-\frac{1}{2}}(\abs{s}^{-1}))
        \end{align*}
        So $\norm{\tr_{\omega_s}\ddc_z \vphi_{\rm high, s}(z)}_{L^\infty
        (X_s)}$ is uniformly bounded in $s$.
        \item
        Since $\tr_{\omega_s}\alpha$ is continuous on $X \setminus
        \Sing X_0$ and 
        \[
        \norm{\tr_{\omega_s}\ddc_z \vphi_{\rm high, s}(z) - 
        \tr_{\omega_s}\alpha}_{L^\infty(X_s)}
        = \norm{\Delta_{\omega_s}\vphi_{\rm{low}, s}}_{L^\infty(X_s)} 
        =
        o(\log^{-\frac{1}{2}}(\abs{s}^{-1})).
        \]
        We have $\tr_{\omega_s}\ddc_z \vphi_{\rm high, s} \to 
        \tr_{\omega_0}\alpha$ in $C^0_{\rm loc}$ on 
        $X_{0, \rm reg}$, which is the second
        condition.
        \item Since $\norm{\vphi_{\rm high}}_{L^\infty(X_s)}$ (by \Cref{thm:boundedness_of_preferred_potential_general}) and $\norm{\tr_{\omega_s}\ddc_z \vphi_{\rm high, s}(z)}_{L^\infty (X_s)}$ are uniformly bounded, $\norm{\vphi_{\rm high}}_{L^2(X_s)}$ and $\norm{\di \vphi_{\rm high}}_{L^2(X_s)}$ are also uniformly bounded. So the third condition 
        is verified.
    \end{itemize}

    So we can apply \Cref{cor:fiber_int_continuity_central_convergence}
    to $\vphi_{\rm high, s}$ and $\int_{X_s}\vphi_{\rm high} \beta$
    is then continuous at $0$.

    In summary, $\la \alpha, \beta\ra(s)$ can be continuously extended to $\D$.
\end{proof}
\vone

For general smooth classes $\alpha$ and $\beta$ satisfying \Cref{condition:integrability_general_fibers}, we can extend \Cref{thm:continuity_of_pairing_zero_singular_integral}.

\begin{theorem}[Asymptotics of pairing]
    \label{thm:continuity_of_pairing_full}
    Let $\alpha$ and $\beta$ be two smooth real $(1, 1)$-forms whose integrals on general fibers are $0$. 
    Then there is a constant $c_{\alpha, 
    \beta}$ such that 
    \[
    \la \alpha, \beta \ra - c_{\alpha, \beta} \log\abs{s}^2
    \]
    extends continuously to $\D$. 
    
    The constant $c_{\alpha, \beta}$ is determined in the following
    way: Let $M$ be the intersection matrix of irreducible components
    of the singular fiber $X_0$, given by $M_{ij} = C_i \cdot C_j$. Let 
    $M^+$ be the 
    Moore-Penrose pseudoinverse of $M$. Then $c_{\alpha, \beta} = 
    \mathbf{v}_\alpha^T (M^+)^T  \mathbf{v}_\beta$, where 
    \[
    \mathbf{v}_\alpha = \left( \int_{C_1} \alpha, \dots, \int_{C_N} \alpha \right)^T, \quad 
    \mathbf{v}_\beta = \left( \int_{C_1} \beta, \dots, \int_{C_N} \beta \right)^T.
    \]

    In particular, if either $\alpha$ or $\beta$ satisfies the \emph{integrability condition on the singular fiber} (\Cref{condition:integrability_singular_fibers}), then $c_{\alpha, \beta }= 0$ because either $\mathbf{v}_\alpha$ or $\mathbf{v}_\beta$ is zero.
\end{theorem}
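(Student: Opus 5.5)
The plan is to reduce to \Cref{thm:continuity_of_pairing_zero_singular_integral} by subtracting from $\alpha$ and $\beta$ explicit forms whose preferred potentials are known logarithms. Pairings of those explicit forms can then be computed by hand.

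\textbf{Step 1 (linear algebra).} Since $X_0=\sum_i C_i$ is the fiber $\pi^{-1}(0)$, Zariski's lemma says $M$ is negative semidefinite with $\ker M=\R(1,\dots,1)^T$. Moreover $\sum_j(\mathbf v_\alpha)_j=\int_{X_0}\alpha=\int_{X_s}\alpha=0$. Hence $\mathbf v_\alpha\in(\ker M)^\perp=\operatorname{Im}M$, and $\mathbf a_\alpha:=M^+\mathbf v_\alpha$ satisfies $M\mathbf a_\alpha=\mathbf v_\alpha$; define $\mathbf a_\beta$ in the same way.

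\textbf{Step 2 (the model forms).} For each $i$, fix a hermitian metric $h_i$ on $\mathcal O(C_i)$ with canonical section $\sigma_i$, and let $\theta_i$ be its Chern form. For $D_\alpha=\sum_i a_{\alpha,i}C_i$ (a real divisor), set $\theta_{D_\alpha}=\sum_i a_{\alpha,i}\theta_i$ and $\psi_\alpha=\sum_i a_{\alpha,i}\log|\sigma_i|^2_{h_i}$. Then:
\begin{itemize}
\item $\int_{X_s}\theta_{D_\alpha}=D_\alpha\cdot X_s=0$;
\item $\int_{C_j}\theta_{D_\alpha}=(M\mathbf a_\alpha)_j=(\mathbf v_\alpha)_j$;
\item by Poincar\'e--Lelong, $\ddc\psi_\alpha+\theta_{D_\alpha}=0$ on every $X_s$ with $s\neq0$, because $D_\alpha$ is supported on $X_0$.
\end{itemize}
So $\alpha'=\alpha-\theta_{D_\alpha}$ and $\beta'=\beta-\theta_{D_\beta}$ satisfy \Cref{condition:integrability_singular_fibers}, and by the remark after \Cref{dfn:pairing_of_forms} we may use $\psi_\alpha$ as a potential for $\theta_{D_\alpha}$. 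Bilinearity and symmetry (\Cref{prop:properties_of_pairing}) give
\[
\la\alpha,\beta\ra=\la\alpha',\beta'\ra+\int_{X_s}\psi_\alpha\beta'+\int_{X_s}\psi_\beta\alpha'+\int_{X_s}\psi_\alpha\theta_{D_\beta}.
\]
The first term extends continuously by \Cref{thm:continuity_of_pairing_zero_singular_integral}.

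\textbf{Step 3 (fiber integrals of $\log|\sigma_i|^2$; the main obstacle).} The key claim is the following, for any smooth $(1,1)$-form $\gamma$:
\[
\int_{X_s}\log|\sigma_i|^2_{h_i}\,\gamma=\log|s|^2\int_{C_i}\gamma+(\text{a function continuous at }0).
\]
To prove it, choose the metrics so that $\prod_i|\sigma_i|^2_{h_i}=|\pi|^2e^{u}$ with $u$ smooth. Then on thick parts of $X_s$ near $C_j$ with $j\neq i$, $\log|\sigma_i|^2$ is smooth and converges. On thick parts near $C_i$, $\log|\sigma_i|^2-\log|s|^2=u-\sum_{k\ne i}\log|\sigma_k|^2$ is again smooth and converges. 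Write $g_s=\log|\sigma_i|^2-\log|s|^2\,(F_s)_*\chi^{(i)}_{4\epsilon(s)}$, using the cutoffs of \Cref{lem:good_test_central}. Then $g_s$ converges in $C^0_{\rm loc}$ on $X_{0,\rm reg}$. Near a singular point, $g_s$ is controlled by quantities like $\min(|\log|f_1|^2|,|\log|f_2|^2|)$ in local equations $f_1f_2\cdots=s$. The hardest part is showing that this contribution gives $I(\epsilon)\to0$ in \Cref{lem:fib_inte_continuity_full}. For nodes I would compute directly on the neck $xy=s$ that $\int|\log|x||\,\omega_s$ over $\{|s|/\epsilon<|x|<\epsilon\}$ is $O(\epsilon^2|\log\epsilon|)$ uniformly in $s$. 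For more general reduced singularities I would try a Barlet/Lelong-type argument (\Cref{thm:fib_integral_barlet}, \Cref{lem:uniform_tubular_volume}) bounding $\int_{B(\Sing X_0,\epsilon)\cap X_s}|\log|f_k||$. Lemma~\ref{lem:fib_inte_continuity_full} then makes $\int_{X_s}g_s\gamma$ continuous at $0$. The remaining term is $\log|s|^2\int_{X_s}(F_s)_*\chi^{(i)}\gamma$, and \Cref{lem:integral_of_model_function_with_forms} (with $\sqrt{\mathrm{Area}(C_i)}$ normalization removed) shows it equals $\log|s|^2\bigl(\int_{C_i}\gamma+O(|s|^{1/(4\nu)})\bigr)$; the error times $\log|s|^2$ tends to $0$.

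\textbf{Step 4 (assembly).} By Step 3, the cross terms equal $\log|s|^2\sum_i a_{\alpha,i}\int_{C_i}\beta'$ plus a continuous function, and $\int_{C_i}\beta'=0$; likewise for $\int_{X_s}\psi_\beta\alpha'$. The last term equals
\[
\log|s|^2\sum_i a_{\alpha,i}(M\mathbf a_\beta)_i=\log|s|^2\,\mathbf v_\alpha^TM^+MM^+\mathbf v_\beta=\log|s|^2\,\mathbf v_\alpha^T(M^+)^T\mathbf v_\beta
\]
plus a continuous function, using that $M^+$ is symmetric and $M^+MM^+=M^+$. Hence $c_{\alpha,\beta}=\mathbf v_\alpha^T(M^+)^T\mathbf v_\beta$, and this vanishes when $\mathbf v_\alpha$ or $\mathbf v_\beta$ is zero.
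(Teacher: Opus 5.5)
\textbf{Gap in Step 3 at non-nodal singular points.} Steps 1, 2 and 4 are, in substance, the paper's argument: Zariski's lemma, $\mathbf a_\alpha=M^+\mathbf v_\alpha$, subtracting $\sum_i a_{\alpha,i}\theta_i$ with fiberwise potential $\sum_i a_{\alpha,i}\log\abs{\sigma_i}^2$, bilinearity, \Cref{thm:continuity_of_pairing_zero_singular_integral} for $\la\alpha',\beta'\ra$, and $\mathbf a_\alpha^TM\mathbf a_\beta=\mathbf v_\alpha^T(M^+)^T\mathbf v_\beta$. The problem is Step 3, which the paper isolates as \Cref{lem:log_norm_section_fib_int_general_case}.

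Your route via $F_s$, the cutoffs and \Cref{lem:fib_inte_continuity_full} works at nodes. However, \Cref{setup:degeneration_with_reduced_fiber} only assumes $X_0$ reduced, so $\Sing X_0$ may contain tacnodes, ordinary triple points, and so on. There you give no argument for $I(\epsilon)\to 0$.

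The fallback you sketch cannot work as stated. Take $p\in C_k$ and look at the sheet of $X_s\cap B(p,\epsilon)$ lying over $C_k$, in the range $\epsilon/2<r_p<\epsilon$. There $\abs{\log\abs{f_k}}\geq\abs{\log\abs{s}}-O(\abs{\log\epsilon})$ on a region of area comparable to $\epsilon^2$. Hence $\int_{B(\Sing X_0,\epsilon)\cap X_s}\abs{\log\abs{f_k}}\,\omega_s$ is at least $c\,\epsilon^2\abs{\log\abs{s}}$, which is unbounded as $s\to0$. The same objection applies to your literal neck estimate for $\abs{\log\abs{x}}$ on $\{\abs{s}/\epsilon<\abs{x}<\epsilon\}$. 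Only the $\min(\abs{\log\abs{x}},\abs{\log\abs{y}})$ version is uniformly $O(\epsilon^2\abs{\log\epsilon})$. Barlet's theorem needs a smooth integrand, and \Cref{lem:uniform_tubular_volume} only controls area, so neither supplies the missing estimate.

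\textbf{What is needed, and how the paper gets it.} Away from the thick part, the crude bound $\abs{g_s}\leq\abs{\log\abs{s}^2}+C$ suffices. It follows from $\prod_k\abs{\sigma_k}^2_{h_k}=\abs{s}^2e^{u}$, and it is multiplied by the polynomially small area of the unmapped and transition regions (\Cref{cor:measure_of_unmapped}). The missing ingredient is a uniform-in-$s$ integrability bound for $\log\abs{f_k}$ on the thick part of $X_s\cap B(p,\epsilon)$ lying over branches other than $C_k$. At a general isolated singular point this needs a genuine local analysis, for instance a \L{}ojasiewicz lower bound for $\abs{f_k}$ on the other branches, played off against the displacement $K\abs{s}^{3/4}$ of $F_s$. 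You have not supplied this.

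The paper avoids the issue by semistable reduction. On $\tilde\mu\colon Y\to X$ with $s=t^d$, one has $\tilde\mu^*C_i=d\,\tilde C_i+\sum_k\nu_kE_k$. The $E_k$ are contracted to points, so $\int_{E_k}\tilde\mu^*\gamma=0$. Only nodes remain, and an explicit computation (yours, or the chart computation of \Cref{lem:log_norm_section_fib_int_snc_case}) gives $d\log\abs{t}^2\int_{\tilde C_i}\tilde\mu^*\gamma=\log\abs{s}^2\int_{C_i}\gamma$. Adding this reduction, or proving the general-singularity estimate, closes the gap. As it stands your proof covers only nodal $X_0$.
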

\begin{proof}
    Since the singular fiber is reduced, we have 
    $X_0=\sum_{i=1}^N C_i$,
    where $C_i$ are the irreducible components.

    We consider the $N \times N$ intersection matrix
    $M$ with entries $M_{ij} = C_i \cdot C_j$. By
    Zariski's Lemma (see \cite{BHPVdV2004}*{III.8.2, p.111}), we have 
    \begin{itemize}
        \item The matrix $M$ is negative semi-definite.
        \item The kernel of $M$ is exactly one-dimensional and is spanned by 
        $\mathbf{1} = (1, 1, \cdots, 1)$.
    \end{itemize}

    Let $V_0 = \left\{ \mathbf{x} \in \mathbb{R}^N \colon \sum_{i=1}^N x_i = 0 \right\}$ be the degree-zero subspace. Then $M|_{V_0}$
    is strictly negative definite and is invertible.
    To compute the inverse of $M|_{V_0}$, we introduce $M^+$ the 
    Moore-Penrose pseudoinverse of $M$. Then $M^+$ acts as the
    inverse of $M$ on $V_0$ and annihilates $\mathrm{span}(\mathbf{1})$.

    For each $C_i$, the irreducible component of 
    $X_0$, we consider the line bundle 
    $L_i = \mathcal{O}(C_i)$ equipped with a 
    Hermitian metric $h_i$. Then we have a closed smooth $(1, 1)$-form 
    $\gamma_i = -\ddc \log \norm{e}^2_{h_i}$ 
    representing the class $[C_i]$,
    where $e$ is a local frame of the holomorphic 
    line bundle $L_i$. Let $\sigma_i$ be the canonical section of $L_i$ which vanishes exactly on $C_i$. Then the preferred potential 
    for $\gamma_i$ on each smooth fiber $X_s$ is $\log \norm{\sigma_i}^2_{h_i}$ up to a constant.

    We define the following map, which maps onto
    differential
    forms representing cohomology classes spanned by $\{[C_i]\}$:
    \begin{align*}
    \mathrm{Form} \colon 
    \R^N
    &\to \{\text{smooth closed } (1,1)\text{-forms with zero integral on the singular fiber}\}\\
    \mathbf{x} = (x_1, \cdots, x_N) & \mapsto 
    \sum_{i=1}^N x_i \gamma_i \eqqcolon \mathrm{Form}(\mathbf{x})
    \end{align*}

    Furthermore, the preferred potential of $\mathrm{Form}(\mathbf{x})$ on $X_s$ will be $\sum_{i=1}^N x_i \log \norm{\sigma_i}^2_{h_i}$ up to a constant depending on $s \neq 0$.

    For smooth $(1, 1)$-forms $\alpha, \beta$ on $X$ such that $\int_{X_s} \alpha = \int_{X_s} \beta = 0$ for $s \neq 0$, the vectors of component integrals
    \[
    \mathbf{v}_\alpha = \left( \int_{C_1} \alpha, \dots, \int_{C_N} \alpha \right)^T, \quad 
    \mathbf{v}_\beta = \left( \int_{C_1} \beta, \dots, \int_{C_N} \beta \right)^T.
    \]
    are in $V_0$. We can then decompose $\alpha, \beta$
    into 
    \begin{align*}
        \alpha &= \mathrm{Form}(M^+ \mathbf{v}_\alpha) + \underbrace{(\alpha - \mathrm{Form}(M^+ \mathbf{v}_\alpha))}_{\eqqcolon \alpha_{0}} \\
        \beta &= \mathrm{Form}(M^+ \mathbf{v}_\beta) + \underbrace{(\beta - \mathrm{Form}(M^+ \mathbf{v}_\beta))}_{\eqqcolon \beta_{0}}
    \end{align*}
    Note that for each $C_i$,
    \begin{align*}
        \int_{C_i}\alpha_{0} &= 
        \text{i-th coordinate of }(\mathbf{v}_\alpha - M M^+ \mathbf{v}_\alpha) \\
        &= 0 
        \quad (\mathbf{v}_\alpha \in V_0, 
        \text{ so }M M^+ \mathbf{v}_\alpha = \mathbf{v}_\alpha).
    \end{align*}
    Similarly, we have $ \int_{C_i}\beta_{0} = 0$ for each 
    $C_i$.

    Since the pairing $\la \alpha, \beta\ra$
    is bilinear and symmetric, we have 
    \begin{align*}
        \la \alpha, \beta \ra
        &= \la \alpha_{0},\beta_{0}\ra +\la \alpha_{0},  \mathrm{Form}(M^+ \mathbf{v}_\beta)\ra+\la \mathrm{Form}(M^+ \mathbf{v}_\alpha), 
        \beta_{0}\ra\\
        &+\la \mathrm{Form}(M^+ \mathbf{v}_\alpha),
        \mathrm{Form}(M^+ \mathbf{v}_\beta)\ra
    \end{align*}

    By the previous theorem (\Cref{thm:continuity_of_pairing_zero_singular_integral}),
    $\la \alpha_{0},  \beta_{0}\ra$ can be extended 
    continuously to
    $S$. 
    
    Since the preferred potential for $\mathrm{Form}(M^+ \mathbf{v}_\alpha)$ is 
    $\sum_{i=1}^N (M^+ \mathbf{v}_\alpha)_i 
    \log \norm{\sigma_i}^2_{h_i}$ up to a constant on $X_s$, we have 
    \begin{align*}
        \la \mathrm{Form}(M^+ \mathbf{v}_\alpha), 
        \beta_{0}\ra =\sum_{i=1}^N\int_{X_s}  (M^+ \mathbf{v}_\alpha)_i 
        \log \norm{\sigma_i}^2_{h_i} \beta_{0}
    \end{align*}

    Using the asymptotics of log-norm integral (\Cref{lem:log_norm_section_fib_int_general_case}), each $\int_{X_s}\log \norm{\sigma_i}^2_{h_i} \beta_{0}$ extends continuously 
    on $S$. Thus, $\la \mathrm{Form}(M^+ \mathbf{v}_\alpha), 
    \beta_{0}\ra$ extends Hölder continuously 
    to $S$. 
    
    Similarly, $\la \alpha_{0},  \mathrm{Form}(M^+ \mathbf{v}_\beta)\ra$
    extends Hölder continuously 
    to $S$.

    For the last term, we have 
    \begin{align*}
        \la \mathrm{Form}(M^+ \mathbf{v}_\alpha),
        \mathrm{Form}(M^+ \mathbf{v}_\beta)\ra 
        &= 
        \sum_{i=1}^N \sum_{j=1}^N
        (M^+ \mathbf{v}_\alpha)_i 
        (M^+ \mathbf{v}_\beta)_j
        \int_{X_s}  
        \log \norm{\sigma_i}^2_{h_i} 
        \gamma_j.
    \end{align*}
    By \Cref{lem:log_norm_section_fib_int_general_case},  $\int_{X_s}  
    \log \norm{\sigma_i}^2_{h_i} 
    \gamma_j - \int_{C_i}\gamma_j \log\abs{s}^2$
    can be extended Hölder continuously through the 
    origin, so 
    \[
    \la \mathrm{Form}(M^+ \mathbf{v}_\alpha),
        \mathrm{Form}(M^+ \mathbf{v}_\beta)\ra  
        - \underbrace{\left(\sum_{1 \leq i,j \leq N}
        (M^+ \mathbf{v}_\alpha)_i 
        (M^+ \mathbf{v}_\beta)_j \int_{C_i}\gamma_j\right)}_{\eqqcolon c_{\alpha, \beta}} \log\abs{s}^2
    \] 
    extends Hölder continuously on $S$. Here $c_{\alpha, \beta}$ can be simplified as  
    \begin{align*}
        c_{\alpha, \beta} = 
        \sum_{1 \leq i,j \leq N}
        (M^+ \mathbf{v}_\alpha)_i 
        (M^+ \mathbf{v}_\beta)_j M_{ij}
        &= (M^+ \mathbf{v}_\alpha)^T M M^+ \mathbf{v}_\beta\\
        &=(M^+ \mathbf{v}_\alpha)^T {v}_\beta
    \end{align*}

    In summary, $\la \alpha, \beta \ra -c_{\alpha, \beta} \log\abs{s}^2$
    extends continuously to $\D$.
\end{proof}

We note that the height pairing $\la \alpha, \beta \ra$ 
is well-behaved under semi-stable reduction. So we can extend
the asymptotic result in \Cref{thm:continuity_of_pairing_full} to the
degeneration of algebraic curves with 
non-reduced singular fibers.
\begin{corollary}
    \label[corollary]{cor:continuity_of_pairing_general_fiber}
    Let $\pi\colon X \to \D$ be a degeneration of algebraic curves.
    Assume $X$ is smooth, and allow the singular fiber $X_0$ to be non-reduced and non-irreducible.
    Let
    $p \colon Y \to \D$ be a semi-stable reduction of $(X, X_0)$ with the following commutative diagram
    \[
    \begin{tikzcd}
        & Y  \arrow[r, "\mu"] \arrow[d, "p"] & X \arrow[d, "\pi"] \\
        & \D_t \arrow[r, "f_d"] & \D_s
    \end{tikzcd}
    \]
    Then $f_d(t) = t^d$ and $Y_0$ is a reduced 
    divisor with simple normal crossings. Furthermore, 
    $Y$ is a \Kahler manifold.

    Let $\alpha, \beta$ be two smooth real $(1, 1)$-forms on $X$
    with \Cref{condition:integrability_general_fibers}. Then 
    $\mu^* \alpha$ and $\mu^* \beta$ are two smooth $(1, 1)$-forms on $Y$ with \Cref{condition:integrability_general_fibers}. So we can define 
    $\la \alpha, \beta \ra \in C^\infty(\D_s^\circ)$ and 
    $\la \mu^* \alpha, \mu^* \beta \ra \in 
    C^\infty(\D_t^\circ)$. These two pairings are related 
    by 
    \begin{equation}
        \label{eqn:pairing_pullback}
        \la \alpha, \beta \ra \circ f_d = \la \mu^* \alpha, \mu^* \beta \ra.
    \end{equation}

    In particular, there is a constant $c_{\alpha, 
    \beta} \in \R$ such that 
    \(
    \la \alpha, \beta \ra - c_{\alpha, \beta} \log\abs{s}^2
    \)
    extends continuously to $\D$. So $\la \alpha, \beta \ra \in L^\infty(\D^\circ)$ if and only if 
    $\la \alpha, \beta \ra \in C^0(\D)$.
\end{corollary}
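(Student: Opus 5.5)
The strategy is to prove the pullback identity \Cref{eqn:pairing_pullback} directly and then transfer \Cref{thm:continuity_of_pairing_full} from $Y$ back to $X$.

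Fix $t \neq 0$ and put $s = t^d$. Semi-stable reduction is a base change $X \times_{\D_s} \D_t$ followed by normalization and a resolution supported over the central fiber. Hence, for $t \neq 0$, $\mu$ restricts to a biholomorphism $\mu_t \colon Y_t \to X_{t^d}$. First, $\mu^*\alpha$ satisfies \Cref{condition:integrability_general_fibers}, because $\int_{Y_t}\mu^*\alpha = \int_{X_{t^d}}\alpha = 0$; the same holds for $\beta$. Next, let $\vphi$ be the preferred potential of $\alpha$. Then $\mu_t^*\vphi_{t^d}$ satisfies $\ddc(\mu_t^*\vphi_{t^d}) + \mu^*\alpha|_{Y_t} = 0$, since pullback commutes with $\ddc$. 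It need not be normalized with respect to the Kähler form on $Y$. By the remark after \Cref{dfn:pairing_of_forms}, however, any potential computes the pairing. Therefore
\[
\la \mu^*\alpha, \mu^*\beta\ra(t) = \int_{Y_t} \mu_t^*\vphi_{t^d}\, \mu^*\beta = \int_{X_{t^d}} \vphi_{t^d}\,\beta = \la \alpha,\beta\ra(t^d),
\]
by invariance of integration under the biholomorphism $\mu_t$. Note that the definition of the pairing never uses the Kähler form, only the potential. So the choice of Kähler metric on $Y$ is irrelevant, as long as one exists and \Cref{thm:continuity_of_pairing_full} applies to it.

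To apply \Cref{thm:continuity_of_pairing_full} on $Y$, I will check \Cref{setup:degeneration_with_reduced_fiber}. $Y$ is Kähler after shrinking the disk, being a modification of a finite cover of a Kähler manifold. $Y_0$ is reduced with simple normal crossings, and $p$ has connected fibers. If $Y_0$ happens to be irreducible, it is a smooth curve and the continuity is classical (or follows from the same argument with $N = 1$, where no small eigenvalues occur). The proof of \Cref{thm:continuity_of_pairing_full} also needs Zariski's lemma and the fiber-integral lemma, both of which hold on $Y$. One subtlety is normalizing $\omega$ so that the fiber volume is $1$; the pairing is independent of this normalization. The theorem then gives a constant $c'$ such that $\la\mu^*\alpha,\mu^*\beta\ra - c'\log\abs{t}^2$ extends continuously to $\D_t$.

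Finally I descend to $\D_s$. Set $c_{\alpha,\beta} = c'/d$, so that $c'\log\abs{t}^2 = c_{\alpha,\beta}\log\abs{s}^2$. Then $g(t) \coloneqq \la\alpha,\beta\ra(t^d) - c_{\alpha,\beta}\log\abs{t^d}^2$ is continuous on $\D_t$ and invariant under $t \mapsto \zeta t$ for $\zeta^d = 1$. Hence it descends to a function on $\D_s^\circ$, namely $\la\alpha,\beta\ra - c_{\alpha,\beta}\log\abs{s}^2$, which has the limit $g(0)$ at $0$. It is therefore continuous on $\D_s$, since $f_d$ is a proper open quotient map. The $L^\infty$ statement follows immediately: $\log\abs{s}^2$ is unbounded, so boundedness forces $c_{\alpha,\beta}=0$, and then the extension is continuous; the converse is trivial.

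The main obstacle I expect is the verification that $Y$ genuinely satisfies the hypotheses. This includes Kählerness, the normal-crossing reduced central fiber being reducible (or handling the irreducible case separately), and identifying the smooth fibers of $Y$ biholomorphically with those of $X$. These rest on standard properties of the semi-stable reduction theorem rather than on new analysis.
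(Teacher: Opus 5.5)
Your proposal is correct and is essentially the paper's argument. You identify $Y_t$ with $X_{t^d}$ via $\mu_t$, use the fact that any fiberwise potential computes the pairing to get $\la \alpha, \beta \ra \circ f_d = \la \mu^*\alpha, \mu^*\beta \ra$, and then apply \Cref{thm:continuity_of_pairing_full} on $Y$ with $c_{\alpha,\beta} = c_{\mu^*\alpha,\mu^*\beta}/d$. The paper leaves several steps implicit, and you handle them soundly: the normalization of the \Kahler{} form, the case where $Y_0$ is irreducible, descending continuity through $t \mapsto t^d$, and the $L^\infty$ equivalence.
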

\begin{proof}
    It suffices to prove \Cref{eqn:pairing_pullback}. 
    
    For 
    $0 \neq t_0 \in \D_t$, we have an isomorphism $\mu_{t_0}
    \colon Y_{t_0} \to X_{f_d(t_0)}$. Let $-\vphi$ be a potential
    for $\alpha$ on $X_{f_d(t_0)}$, i.e., 
    $\ddc \vphi + \alpha = 0$ on $X_{f_d(t_0)}$. Then 
    $-\mu^* \vphi$ is a potential
    for $\mu^*\alpha$ on $Y_{t_0}$. So 
    \[
    \la \mu^* \alpha, \mu^* \beta \ra(t_0)
    = \int_{Y_{t_0}} (\mu^* \vphi) \mu^* \beta
    = \int_{X_{f_d(t_0)}} \vphi \beta = 
    \la \alpha, \beta \ra(f_d(t_0)).
    \]
    Thus $\la \alpha, \beta \ra \circ f_d = \la \mu^* \alpha, \mu^* \beta \ra$ on $\D_t^\circ$.

    By \Cref{thm:continuity_of_pairing_full}, $\la \mu^* \alpha, \mu^* \beta \ra  - c_{\mu^* \alpha, \mu^* \beta}
    \log\abs{t}^2$ is continuous on $\D_t$. So 
    we can find $c_{\alpha, \beta} \in \R$ depending on 
    $c_{\mu^* \alpha, \mu^* \beta}$ and $d$ such that 
    $\la \alpha, \beta \ra - c_{\alpha, \beta} \log\abs{s}^2$
    is continuous on $\D_s$.
\end{proof}

At the end of this section, we propose a question on the
Hölder continuity of the pairing.
\begin{question}
\label[question]{question:holder-continuity}
We consider a degeneration of algebraic curves with reduced singular
fibers as in \Cref{setup:degeneration_with_reduced_fiber}. Let $\alpha, \beta$ be two smooth real $(1, 1)$-forms on $X$ satisfying 
\(
    \int_{C_i} \alpha = \int_{C_i} \beta = 0,
\)
where $C_i$ are irreducible components
of the reduced singular fiber $X_0$. Is the pairing
$\la \alpha, \beta \ra$ a Hölder continuous function?

If this question is answered positively, we can strengthen the results in \Cref{thm:continuity_of_pairing_full}, 
\Cref{cor:continuity_of_pairing_general_fiber}, \Cref{prop:continuity_of_current_val_pairing} and \Cref{cor:dynamical_consequences} by replacing the word ``\emph{continuously}''
with ``\emph{Hölder continuously}''.
\end{question} 

\section{Application to K3 surfaces with parabolic automorphisms}
\label{section:5-k3-dynamics}

As an application of our previous results, we study the parabolic
dynamics of elliptic K3 surfaces, following 
\cite{Filip-Tosatti21}*{Section 3}. 

Let \(\pi \colon X \to \P^1 \eqqcolon B\) be a projective
elliptic K3 surface.
A \emph{parabolic automorphism} $T$ of $X$ is an automorphism of $X$
which preserves the fibration $\pi$ and is of infinite order. 
This yields the following commutative diagram:
\[
\begin{tikzcd}
    X \arrow[r, "T"]   \arrow[d, "\pi"']  & X
    \arrow[ld, "\pi"] \\
    B \coloneqq\mathbb{P}^1   &             
\end{tikzcd}
\]

In some literature (e.g. \cite{filip2022k3}*{6.2.2}), \emph{a parabolic automorphism} is also 
called a \emph{twist automorphism}.

Compared to \cite{Filip-Tosatti21}, we allow possibly
non-irreducible and non-reduced singular fibers of $\pi\colon X \to B$ 
in the setup of this section. 

We use $B^\circ$ to denote the smooth locus of 
the fibration $\pi$,
and we use $s$ to denote the local coordinate on $B$. We 
use $X_s \coloneqq \pi^{-1}(s)$.

\subsection{Preliminaries}
Before delving deeper, we introduce basic notions and lemmas in 
this part.

Let $[E] \in \NS(X)$ be the class of a general fiber of $\pi$
in the Néron–Severi group of $X$. We consider the group 
$\mathrm{Aut}_\pi(X)$ consisting of automorphisms of $X$ 
preserving the elliptic fibration. The following
filtration of $\NS(X)$ is preserved by $\mathrm{Aut}_\pi(X)$:
\[
0  \subsetneq [E] \subsetneq [E]^\perp \subsetneq \NS(X),
\]
where $[E]^\perp$ is the orthogonal complement of $[E]$
with respect to the intersection form in $\NS(X)$. By
the 
Hodge index theorem, the intersection form on
the quotient lattice $L \coloneqq [E]^\perp/[E]$ is strictly negative definite. 

Let $\mathrm{Isom}(L)$ be the 
group of automorphisms of the lattice $L$ that preserve
the induced intersection form. Since the induced intersection form
is strictly negative-definite, $\mathrm{Isom}(L)$ is a 
discrete subgroup of the compact orthogonal group $O(\rank(L))$.
Thus, $\mathrm{Isom}(L)$ is finite. 

Since $\mathrm{Aut}_\pi(X)$ preserves the lattice $L$ equipped
with the intersection form, we have the induced group homomorphism:
\[
\rho \colon \mathrm{Aut}_\pi(X) \to \mathrm{Isom}(L).
\]
We define $\ker(\rho) \eqqcolon \bm{\mathrm{Aut}^\circ_\pi(X)}$, 
which is the subgroup of $\mathrm{Aut}_\pi(X)$ whose induced action 
on $[E]^\perp/[E]$ is trivial. 
Because $\mathrm{Isom}(L)$ is 
finite, $\mathrm{Aut}^\circ_\pi(X)$ is a finite index 
subgroup of $\mathrm{Aut}_\pi(X)$.

If we work inside $\mathrm{H}^{1,1}(X, \R)$, we have a 
similar construction of $\mathrm{Aut}^\circ_\pi(X)$.
\begin{lemma}
    \label[lemma]{lem:the_finite_index_subgroup}
    Let $[E]^\perp_{\mathrm{H}^{1,1}}$ denote the orthogonal complement of $[E]$
    in $\mathrm{H}^{1,1}(X, \R)$. Then the following 
    filtration is preserved by $\mathrm{Aut}_\pi(X)$:
    \[
    0  \subsetneq [E]_{\R} \subsetneq [E]^\perp_{\mathrm{H}^{1,1}} 
    \subsetneq \mathrm{H}^{1,1}(X, \R).
    \]

    We define $\bm{\mathrm{Aut}^{\circ, (1, 1)}_\pi(X)}$ as
    the subgroup of $\mathrm{Aut}_\pi(X)$ that acts
    trivially on the quotient space 
    $[E]^\perp_{\mathrm{H}^{1,1}} / [E]_{\R}$. Then 
    $\mathrm{Aut}^{\circ, (1, 1)}_\pi(X)$ is a finite 
    index subgroup of $\mathrm{Aut}_\pi(X)$.
\end{lemma}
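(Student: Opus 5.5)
To prove \Cref{lem:the_finite_index_subgroup}, I will first check that the filtration is preserved. Then I will show that the action of $\mathrm{Aut}_\pi(X)$ on the graded piece $[E]^\perp_{\mathrm{H}^{1,1}}/[E]_\R$ factors through a finite group.

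\emph{Preservation of the filtration.} For $T \in \mathrm{Aut}_\pi(X)$ we have $\pi\circ T = \pi$. Hence $T$ maps fibers to fibers, and $T^*[E] = [E]$ because $T^*[E]$ is the class of $T^{-1}(E)$, which is again a fiber. Since $T^*$ preserves the cup product and the Hodge decomposition, it preserves $\mathrm{H}^{1,1}(X,\R)$ and the orthogonal complement $[E]^\perp_{\mathrm{H}^{1,1}}$. The inclusions are strict:
\begin{itemize}
\item $[E]\neq 0$ and $[E]^2=0$, so $[E]_\R\subset[E]^\perp_{\mathrm{H}^{1,1}}$;
\item a K\"ahler class $\kappa$ has $\kappa\cdot[E]>0$, so $[E]^\perp_{\mathrm{H}^{1,1}}\neq \mathrm{H}^{1,1}(X,\R)$;
\item $h^{1,1}=20$, so $[E]^\perp_{\mathrm{H}^{1,1}}$ has dimension $19>1$.
\end{itemize}

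\emph{Finiteness of the action on the quotient.} By the Hodge index theorem on $\mathrm{H}^{1,1}(X,\R)$, whose signature is $(1,h^{1,1}-1)$, the form is negative semidefinite on $[E]^\perp_{\mathrm{H}^{1,1}}$ with radical exactly $[E]_\R$, since $[E]$ is a nonzero isotropic vector. So $Q\coloneqq[E]^\perp_{\mathrm{H}^{1,1}}/[E]_\R$ carries a negative definite form, and the image of $\mathrm{Aut}_\pi(X)$ lands in the compact group $O(Q)$.

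Discreteness does not follow from $\mathrm{H}^{1,1}(X,\R)$ alone, since this space has no integral structure in general. I will therefore use the lattice $\mathrm{H}^2(X,\Z)$. Set $\Lambda\coloneqq [E]^\perp_{\mathrm{H}^2(X,\Z)}/\Z[E]$, modulo torsion. Every $T^*$ preserves $\Lambda$. The key point is that $T^*$ acting on $Q$ is determined by $T^*$ acting on $\Lambda_\R = [E]^\perp_{\mathrm{H}^2(X,\R)}/[E]_\R$, because $Q$ is a subspace of $\Lambda_\R$ preserved by $T^*$. The form on $\Lambda_\R$ is not definite (the transcendental part has signature $(2,\cdot)$), so I cannot conclude directly. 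Instead I will split off the transcendental part.

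Decompose $\mathrm{H}^{1,1}(X,\R) = \NS(X)_\R \oplus (T_X\otimes\R\cap \mathrm{H}^{1,1})$, using $\NS(X)_\R^\perp\cap \mathrm{H}^{1,1}$ for the second summand. Both summands are preserved by $T^*$, and $[E]\in\NS(X)$. Then $Q = L_\R \oplus W$, where $L=[E]^\perp/[E]$ is the negative definite lattice from above and $W\coloneqq \NS(X)^\perp_\R\cap\mathrm{H}^{1,1}(X,\R)$.

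On $L_\R$, the action factors through the finite group $\mathrm{Isom}(L)$, as already shown. On $W$, I will use the standard fact that the image of $\mathrm{Aut}(X)$ in $O(T_X)$ is finite (Nikulin: it acts on $\mathrm{H}^{2,0}$ by roots of unity, and on the transcendental lattice through a finite cyclic group). Hence the kernel of the combined representation
\[
\mathrm{Aut}_\pi(X)\to \mathrm{Isom}(L)\times \mathrm{Im}\bigl(\mathrm{Aut}(X)\to O(T_X)\bigr)
\]
has finite index, and it acts trivially on $Q$. This kernel is contained in $\mathrm{Aut}^{\circ,(1,1)}_\pi(X)$, so the latter also has finite index.

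The main obstacle is the transcendental summand $W$. If $X$ is projective with $\NS(X)_\R = \mathrm{H}^{1,1}(X,\R)$, which holds for instance when $\rho(X)=20$, then $W=0$ and $\mathrm{Aut}^{\circ,(1,1)}_\pi(X)=\mathrm{Aut}^\circ_\pi(X)$ directly. In general one needs the finiteness of the action on $T_X$. A more self-contained alternative avoids that theorem: the image of $\mathrm{Aut}_\pi(X)$ in $O(Q)$ preserves the lattice $\Lambda$ and a compact real subspace of it, and from this I would try to extract finiteness directly.
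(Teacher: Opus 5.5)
Your proposal is correct and follows essentially the same route as the paper. You split $[E]^\perp_{\mathrm{H}^{1,1}}/[E]_\R = ([E]^\perp/[E])_\R \oplus T^{1,1}(X)_\R$, use the finiteness of $\mathrm{Isom}(L)$ on the algebraic summand and the finiteness of the image of automorphisms on the transcendental lattice (the paper cites Huybrechts, Ch.~3, Cor.~3.4) on the other, and intersect the two finite-index kernels. Your extra verification that the filtration is preserved and strict, and the Hodge-index remark, are fine but not needed for the finite-index claim.
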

\begin{proof}
    Since the intersection form is non-degenerate, we have 
    the following orthogonal decomposition
    \[
    \mathrm{H}^{1,1}(X, \R) = \NS(X)_\R \oplus T^{1,1}(X)_\R,
    \]
    where $T^{1,1}(X)_\R$ is the degree $(1, 1)$-part of
    the real 
    transcendental classes $T(X)_\R$, and 
    $T(X)$ is the transcendental lattice of $X$ 
    (see \cite{Huybrechts_K3_Book}*{Chapter 3, Lemma 3.1}). 

    Note that $[E] \in \NS(X)$, so it is perpendicular to 
    transcendental classes $T^{1,1}(X)_\R$.
    Thus, $[E]^\perp_{\mathrm{H}^{1,1}}$ has a similar decomposition
    \[
    [E]^\perp_{\mathrm{H}^{1,1}} = [E]^\perp_\R \oplus 
    T^{1,1}(X)_\R,
    \]
    which descends to the quotient space as:
    \[
    [E]^\perp_{\mathrm{H}^{1,1}} / [E]_{\R} = 
    ([E]^\perp/[E])_\R \oplus T^{1,1}(X)_\R.
    \]
    
    Since $\mathrm{Aut}_\pi(X)$ preserves the 
    transcendental lattice $T(X)$ with its Hodge structure, 
    the image of 
    the induced group homomorphism:
    \[
    \rho_T \colon \mathrm{Aut}_\pi(X) \to \mathrm{Isom}(T(X)_\R)
    \]
    is a finite group by 
    \cite{Huybrechts_K3_Book}*{Chapter 3, Corollary 3.4}.
    Thus $\ker(\rho_T)$ is a finite index subgroup of 
    $\mathrm{Aut}_\pi(X)$.

    Since $T^{1,1}(X)_\R \subset T(X)_\R$, we have 
    \[
    \ker(\rho_T) \cap \mathrm{Aut}^\circ_\pi(X) 
    \subset \mathrm{Aut}^{\circ, (1, 1)}_\pi(X).
    \]

    Since $\ker(\rho_T)$ and 
    $\mathrm{Aut}^\circ_\pi(X)$ are finite 
    index subgroups, $\mathrm{Aut}^{\circ, (1, 1)}_\pi(X)$
    is also a finite index subgroup of $\mathrm{Aut}_\pi(X)$.
\end{proof}

\begin{remark}
\Cref{lem:the_finite_index_subgroup} also holds for general projective elliptic surfaces. To see this, it suffices to show that the image of the induced homomorphism $\rho_T \colon \mathrm{Aut}_\pi(X) \to \mathrm{Isom}(T(X)_\R)$ is a finite group. This follows from the fact that $\mathrm{Aut}_\pi(X)$ preserves the integral lattice $T(X)$ along with its Hodge structure, and the intersection form on the $(1,1)$-part $T^{1,1}(X)_\R$ is strictly negative definite. (See also the proof of \cite{Huybrechts_K3_Book}*{Chapter 3, Corollary 3.4}.)
\end{remark}

Let $r$ be the index of $\mathrm{Aut}^{\circ, (1, 1)}_\pi(X)$ inside 
$\mathrm{Aut}_\pi(X)$. By elementary group theory, we have $T^{r!} \in \mathrm{Aut}^{\circ, (1, 1)}_\pi(X)$ for any $T \in \mathrm{Aut}_\pi(X)$.

Lastly, we introduce a useful formula on cohomology classes,
which follows directly from the definition of $\mathrm{Aut}^{\circ, (1, 1)}_\pi(X)$.
\begin{lemma}
    \label[lemma]{lem:cohomo_classes_formula}
    Let $v$ be a real $(1, 1)$-class in the cohomology group 
    $\mathrm{H}^{1,1}(X)$.
    Suppose $v \cdot [E] = 0$. 
    Then for any $T \in \mathrm{Aut}^{\circ, (1, 1)}_\pi(X)$, we have 
    \[
    T_*v - v = c(v, T)[E],
    \]
    where $c(v, T) \in \R$ depends linearly on $v$.
\end{lemma}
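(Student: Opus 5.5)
The plan is to decompose \(v\) along the orthogonal splitting from the proof of \Cref{lem:the_finite_index_subgroup} and then use the definition of \(\mathrm{Aut}^{\circ,(1,1)}_\pi(X)\) directly. Since \(v\cdot[E]=0\), the class \(v\) lies in \([E]^\perp_{\mathrm{H}^{1,1}}\). The filtration
\[
0\subsetneq[E]_\R\subsetneq[E]^\perp_{\mathrm{H}^{1,1}}\subsetneq\mathrm{H}^{1,1}(X,\R)
\]
is preserved by \(\mathrm{Aut}_\pi(X)\), so \(T_*v\in[E]^\perp_{\mathrm{H}^{1,1}}\) as well. Here \(T_*\) is invertible, with \(T_*=(T^{-1})^*\), and it preserves both the intersection form and the fiber class. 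For the latter: \(T\) maps fibers of \(\pi\) onto fibers because \(\pi\circ T=\pi\), so \(T_*[E]=[E]\).

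By definition, \(T\in\mathrm{Aut}^{\circ,(1,1)}_\pi(X)\) acts trivially on the quotient \([E]^\perp_{\mathrm{H}^{1,1}}/[E]_\R\). The definition is phrased with respect to the induced action, and since the group is closed under inverses it does not matter whether this action is \(T^*\) or \(T_*\). Hence the image of \(T_*v-v\) in the quotient vanishes, which gives \(T_*v-v\in[E]_\R\). We may therefore write \(T_*v-v=c(v,T)[E]\) for a unique real number \(c(v,T)\); uniqueness holds because \([E]\neq0\), as it has positive intersection with any ample class.

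For linearity, note that \(v\mapsto T_*v-v\) is a linear map from \([E]^\perp_{\mathrm{H}^{1,1}}\) into the one-dimensional space \([E]_\R\). Composing it with the linear isomorphism \([E]_\R\cong\R\), \(t[E]\mapsto t\), shows that \(c(\cdot,T)\) is linear in \(v\). If one wants an explicit formula, fix a Kähler class \([\omega]\); then
\[
c(v,T)=\frac{(T_*v-v)\cdot[\omega]}{[E]\cdot[\omega]},
\]
which is visibly linear in \(v\).

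There is no real obstacle here. The only point that needs care is that \(T_*\) preserves \([E]^\perp_{\mathrm{H}^{1,1}}\), and this follows from \(T_*[E]=[E]\) together with invariance of the cup product under the diffeomorphism \(T\).
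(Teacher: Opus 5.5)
Your proof is correct and takes the same approach as the paper, which gives no argument beyond noting that the lemma follows directly from the definition of $\mathrm{Aut}^{\circ,(1,1)}_\pi(X)$. You spell that out: $v$ lies in $[E]^\perp_{\mathrm{H}^{1,1}}$, $T$ acts trivially modulo $[E]_\R$, and $v\mapsto T_*v-v$ is linear; your remarks on $T^*$ versus $T_*$ and the explicit formula for $c(v,T)$ via a K\"ahler class are harmless extras.
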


\subsection{Current-valued pairing}
We recall the construction of \emph{current-valued pairing}
from \cite{Filip-Tosatti21}*{Theorem 3.2.14}. We relate it to 
the \emph{Archimedean height pairing of smooth forms} in \Cref{dfn:pairing_of_forms}.

Let \(\alpha\) be a closed smooth real \((1,1)\)-form on \(X\) such that 
$\int_{X_s} \alpha =0 $ for $s \neq 0$. Let \(\vphi \in C^{\infty}(X^\circ)\) be the \emph{preferred potential} of $\alpha$.

Cohomologically, we have $[\alpha]\cdot [E] = 0$. Using 
\Cref{lem:cohomo_classes_formula}, we obtain 
\[
T_* [\alpha] - [\alpha] = c [E],
\]
for $T \in \mathrm{Aut}^{\circ, (1, 1)}_\pi(X)$.

Note that $[E]$ can be represented by the smooth form 
$\pi^* \beta$, where $\beta$ is a suitable smooth form on the
base $B = \P^1$. We can take an \(f\in C^{\infty}(X)\) such that
\begin{equation}
    \label{eqn:pull-back-ddc}
    T_*\alpha - \alpha = \pi^* \beta + \ddc f.
\end{equation}

Then $f + T_* \vphi - \vphi$ is constant on each $X_s$ for 
$s \in B^\circ$. Therefore, there is a function $u$ on $B^\circ$ such that 
\begin{equation}
    \label{eqn:pullback_fib_constant}
    \pi^* u = f + T_* \vphi - \vphi.
\end{equation}

Let $\omega$ be a \Kahler metric on $X$ normalized 
such that
$[\omega] \cdot [E] = 1$. Then
$u(s)$ can be determined by 
\begin{equation}
    \label{eqn:base_potential_defn}
\int_{X_s} (\pi^* u) \omega_s = u(s) = 
\int_{X_s}(f + T_* \vphi - \vphi)\omega_s
\end{equation}

We call $u$ the \emph{limit potential}. We rewrite
\Cref{eqn:base_potential_defn} 
using the \emph{Archimedean height pairing} 
in \Cref{dfn:pairing_of_forms}:
\begin{equation}
    \label{eqn:relation_height_pairing_and_current_pairing}
    u(s) = \int_{X_s}(f + T_* \vphi - \vphi) \omega_s =
    \int_{X_s} f \omega_s + \la \alpha, (T^* - I)\omega\ra
\end{equation}

The \emph{current-valued pairing} is
defined as $\eta_B(T, [\alpha])\coloneqq \beta + \ddc u$,
which is a current on $B$. It is smooth on $B^\circ$ since
$u \in C^\infty(B^\circ)$.

We now generalize Filip--Tosatti's current-valued pairing
by allowing arbitrary
singular fibers of the elliptic fibration $\pi\colon
X \to B$.
\begin{proposition}
    \label[proposition]{prop:continuity_of_current_val_pairing}
    Let $T \in \mathrm{Aut}^{\circ, (1, 1)}_\pi(X)$, where 
    $\mathrm{Aut}^{\circ, (1, 1)}_\pi(X)$ is a finite index 
    subgroup of $\mathrm{Aut}_\pi(X)$ defined in 
    \Cref{lem:the_finite_index_subgroup}. 
    Then the \emph{current-valued pairing}
    $\eta_{B}(T, [\alpha])$ has a continuous
    potential for any closed smooth real \((1,1)\)-form 
    $\alpha$ on \(X\) such that $\int_{X_s} \alpha =0 $ for $s \neq 0$.
\end{proposition}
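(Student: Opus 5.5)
The potential of $\eta_B(T,[\alpha])=\beta+\ddc u$ is the limit potential $u$. Away from the finitely many critical values of $\pi$ it is smooth, so continuity is a local question at each critical value $s_0\in B\setminus B^\circ$. Near such a point I would take a disk coordinate $s$ centered at $s_0$ and use \Cref{eqn:relation_height_pairing_and_current_pairing}:
\[
u(s)=\int_{X_s} f\,\omega_s+\la \alpha,(T^*-I)\omega\ra(s).
\]
The first term is a fiber integral of the smooth $(1,1)$-form $f\omega$. By \Cref{thm:fib_integral_barlet} and the remark after it, it is Hölder continuous at $s_0$. So everything reduces to showing that $\la \alpha,(T^*-I)\omega\ra$ extends continuously across $s_0$.

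First I would check that $\gamma\coloneqq(T^*-I)\omega$ is admissible in \Cref{dfn:pairing_of_forms}. Since $T$ preserves the fibration and fiber classes, $\int_{X_s}T^*\omega=\int_{X_s}\omega$, so $\int_{X_s}\gamma=0$. Thus \Cref{cor:continuity_of_pairing_general_fiber} applies locally over the disk. It gives a constant $c$ such that $\la\alpha,\gamma\ra-c\log\abs{s}^2$ extends continuously. Here I would use that $X$ is smooth and Kähler, the degeneration may be non-reduced, and the pairing is invariant under semi-stable base change. It then suffices to show $c=0$, or equivalently that $\la\alpha,\gamma\ra$ is bounded near $s_0$.

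To prove $c=0$ I would pass to the semi-stable model $\mu\colon Y\to X$ over $t\mapsto t^d$. By \Cref{thm:continuity_of_pairing_full}, $c_{\mu^*\alpha,\mu^*\gamma}=\mathbf v_{\mu^*\alpha}^T(M^+)^T\mathbf v_{\mu^*\gamma}$, so it vanishes as soon as $\mathbf v_{\mu^*\gamma}=0$. That is, we need $\int_{D}\mu^*(T^*\omega-\omega)=0$ for each component $D$ of $Y_0$. Write $\mu_*D=m\,C$ with $m\ge 0$ and $C$ a component of $X_{s_0}$ (exceptional components have $m=0$). Then
\[
\int_D\mu^*(T^*\omega-\omega)=m\bigl(\textstyle\int_{T(C)}\omega-\int_C\omega\bigr)=m\,\bigl(T_*[C]-[C]\bigr)\cdot[\omega].
\]
So the key point is that $T_*[C]=[C]$ in cohomology for every fiber component $C$. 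Now $[C]\cdot[E]=0$, so \Cref{lem:cohomo_classes_formula} gives $T_*[C]-[C]=c'[E]$, since $T\in\mathrm{Aut}^{\circ,(1,1)}_\pi(X)$. Intersecting with $[C']$ for a component $C'$ of the same fiber gives nothing, because $[E]\cdot[C']=0$. Instead I would argue that $T$ permutes the finitely many components of the fiber over $T$'s image point. Take a suitable power to reduce to the case where $T$ fixes $s_0$ and each component, so that $T(C)=C$ as curves and $T_*[C]=[C]$. More cleanly, $T(C)$ is a curve with $[T(C)]=[C]+c'[E]$. Intersecting with an ample class $h$ gives $c'=(\deg_h T(C)-\deg_h C)/(E\cdot h)$. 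Both curves lie in fibers, and $T$ sends the fiber over $s_0$ to the fiber over $T(s_0)$. Summing over components with multiplicities and using that $\mathbf 1$ spans the kernel of the intersection matrix forces $c'[E]$ to be orthogonal to all fiber components of the target fiber. Combined with negative semidefiniteness (Zariski's lemma), $[T(C)]-[C]$ is then a multiple of the fiber class, and comparing self-intersections, $T(C)^2=C^2$, pins it down as zero unless the fiber is irreducible. In the irreducible case $\mathbf v$ is automatically zero anyway.

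The main obstacle is this cohomological step, namely showing that the component-integral vector of $T^*\omega-\omega$ vanishes. There is a subtlety when $T$ moves $s_0$ to a different critical value, since then $u$'s behaviour at $s_0$ involves $T^*\omega$ restricted to $X_{s_0}$, which is $\omega$ on $X_{T(s_0)}$. I expect to handle it either by the degree argument above, or by noting that $\mathrm{Aut}^{\circ,(1,1)}_\pi(X)$ acts trivially on $[E]^\perp/[E]$. Differences of fiber components of one fiber span a subspace of $[E]^\perp$ meeting $\R[E]$ trivially, so $T_*([C]-[C'])=[C]-[C']$ exactly. Together with $\sum m_i[C_i]=[E]$ being fixed, this gives $T_*[C]=[C]$. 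With $c=0$, $\la\alpha,\gamma\ra$ extends continuously, and hence so does $u$.
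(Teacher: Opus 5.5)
Your proof is correct in its final form, but it reaches $c=0$ by a genuinely different route from the paper.

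\textbf{Comparison of routes.} The paper never examines fiber components. It uses Tate's Birkhoff-sum identity $S_k(T,f)=k\pi^*u+\vphi-T^k_*\vphi$ to get $\abs{u}\le\norm{f}_{L^\infty(X)}$ on $B^\circ$. Hence $\la\alpha,(T^*-I)\omega\ra=u-\int_{X_s}f\omega_s$ is bounded. The dichotomy in \Cref{cor:continuity_of_pairing_general_fiber} (bounded if and only if continuous) then kills the logarithmic term.

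You instead show that $\gamma=(T^*-I)\omega$, pulled back to the semi-stable model, has zero integral on every component of $Y_0$. The explicit formula $c=\mathbf v_{\mu^*\alpha}^T(M^+)^T\mathbf v_{\mu^*\gamma}$ then vanishes.

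What each buys:
\begin{itemize}
\item The paper's argument is shorter. It needs only the qualitative existence of $c$, not its Moore--Penrose formula or the projection-formula bookkeeping on $Y$. It also gives the quantitative bound $\abs{u}\le\norm{f}_{L^\infty}$ for free.
\item Your argument is purely cohomological and uses no dynamics. It proves something structurally stronger: $\la\alpha',(T^*-I)\omega\ra$ is continuous for every admissible $\alpha'$, closed or not. It also shows where the finite-index subgroup matters beyond merely making $u$ well defined.
\end{itemize}

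\textbf{The step $T_*[C]=[C]$.} Of your three attempts, the middle one fails as written. Comparing self-intersections is vacuous, since $(C+c'E)^2=C^2$ for every $c'$.

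Your last argument is fine once you make one point explicit. $T_*$ preserves the span $D$ of differences $[C_i]-[C_j]$, because $T$ permutes the components of $X_{s_0}$. Hence $T_*x-x\in D\cap\R[E]=0$. That intersection is trivial because the $[C_i]$ are linearly independent, by Zariski's lemma and $[E]\neq 0$.

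There is also a one-line route, using exactly the intersection you dismissed. Pair $[T(C)]-[C]=c'[E]$ with $[C]$ itself. This gives $T(C)\cdot C=C^2<0$ when the fiber is reducible, which forces $T(C)=C$.

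\textbf{Other remarks.}
\begin{itemize}
\item The ``suitable power'' reduction would require $u_{T^k}=k\,u_T$. This does follow from the same Birkhoff identity, but you did not state it.
\item Your worry about $T$ moving $s_0$ does not arise. In this section $\pi\circ T=\pi$ (the commutative triangle). That condition is in fact needed for \Cref{eqn:relation_height_pairing_and_current_pairing} to express $u(s)$ through the pairing on the same fiber $X_s$, so your proof implicitly uses it too.
\end{itemize}
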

\begin{proof}
    We first show that $u \in L^\infty(B^\circ)$
    following Tate's argument in \cite{Tate1983}
    (see also \cite{Filip-Tosatti21}*{Remark 3.2.14}).
    
    From \Cref{eqn:pullback_fib_constant}, for any integer $i \geq 0$, we have 
    \[
    T^i_* \pi^* u = \pi^* u = T^i_*(f + T_*\vphi - \vphi)
    = T^i_* f + (T^{i+1}_* - T_*)\vphi.
    \]
    Then for $k \geq 1$ the Birkhoff sum 
    $S_k(T, f) = \sum_{i= 0}^{k-1}T^i_* f$ satisfies 
    \[
    S_k(T, f) = k \pi^* u + \vphi - T^k_*\vphi.
    \] 
    Thus, for $s \in B^\circ$ and every $k \geq 1$, we have 
    \[
    \abs{u(s)} \leq \frac{1}{k}\sup_{X_s}\abs{S_k(T, f)}
    + \frac{1}{k}\sup_{X_s} \abs{\vphi - T^k_*\vphi}
    \leq \norm{f}_{L^\infty(X)} + \frac{2}{k}
    \norm{\vphi}_{L^{\infty}(X_s)}.
    \]
    Fixing $s \in B^\circ$ and letting $k \to \infty$, we have 
    $\abs{u(s)} \leq \norm{f}_{L^\infty(X)}$. Therefore, 
    $u \in L^\infty(B^\circ)$.

    On the other hand, from the relation between the potential $u$
    and the \emph{Archimedean height pairing} in 
    \Cref{eqn:relation_height_pairing_and_current_pairing},
    the Archimedean height pairing
    \[
    \la \alpha, (T^* - I)\omega\ra(s) = u(s) - \int_{X_s} f\omega_s
    \in L^\infty(B^\circ)
    \]
    is bounded.
    
    By \Cref{cor:continuity_of_pairing_general_fiber}, 
    $\la \alpha, (T^* - I)\omega\ra$ can be extended
    continuously on $B$ because it is bounded and has no 
    singularity. Using 
    Barlet's result in \Cref{thm:fib_integral_barlet},
    the function $s \mapsto \int_{X_s} f\omega_s$
    is Hölder continuous on $B$.

    In summary, $u(s) = \la \alpha, (T^* - I)\omega\ra(s) + 
    \int_{X_s} f\omega_s
    $ is continuous on $B$. 
    So  $\eta_{B}(T, [\alpha])$ has a continuous
    potential.
\end{proof}

As a corollary, we have 
\begin{corollary}
    \label[corollary]{cor:dynamical_consequences}
    Let $\omega$ be a \Kahler metric on $X$. For any 
    $T \in \mathrm{Aut}^{\circ, (1, 1)}_\pi(X)$, the limit current 
    $$
        \lim_{n \to \infty} \frac{(T^n)_* \omega}{n^2} = \frac{1}{2} \pi^*\eta_B(T,
    [T_*\omega - \omega]).
    $$ has a 
    continuous potential. 

    We note that the convergence of currents $\frac{(T^n)_* \omega}{n^2} \to \frac{1}{2}\pi^*\eta_B(T, [T_*\omega - \omega])$ cannot be in $C^0_{\rm loc}(X\setminus X_0)$.
    
    In particular, let $\omega$ be a Ricci flat \Kahler metric on the K3 surface $X$, i.e., 
    \[
    \omega^2 = \vol(\omega) \Omega \wedge \overline{\Omega},
    \]
    where $\Omega$ is the normalized holomorphic $(2, 0)$-form on 
    $X$. Then $T^n_* \omega$ is also Ricci flat. 
    Therefore,
    $\{[\frac{T^n_* \omega}{n^2}]\}$
    provides a counterexample to 
    Tosatti's question \cite{tosatti2025}*{Question 1.5(b)}.
\end{corollary}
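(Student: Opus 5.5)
The plan has three parts: identify the limit current with $\frac12\pi^*\eta_B(T,[T_*\omega-\omega])$ and show it has a continuous potential, show the convergence cannot be $C^0_{\rm loc}$ away from $X_0$, and deduce the Ricci-flat counterexample.

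\textbf{Continuity of the potential.} We first check that $\alpha \coloneqq T_*\omega - \omega$ satisfies the hypotheses of \Cref{prop:continuity_of_current_val_pairing}. It is a closed smooth real $(1,1)$-form. Since $T$ preserves the fibration, $T$ maps fibers isomorphically onto fibers, so $\int_{X_s} T_*\omega = \int_{T^{-1}X_s}\omega = \int_{X_s}\omega$; this uses that $[E]$ is $T$-invariant and the volume of a fiber is cohomological. Hence $\int_{X_s}\alpha = 0$. The identification of the limit with $\frac12\pi^*\eta_B$ is \cite{Filip-Tosatti21}*{Corollary 3.2.20}, valid for $T$ in the finite index subgroup, where $T_*[\alpha]-[\alpha]\in\R[E]$ by \Cref{lem:cohomo_classes_formula}. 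So \Cref{prop:continuity_of_current_val_pairing} gives $\eta_B = \beta + \ddc u$ with $u\in C^0(B)$. Writing $\beta = \ddc g$ locally on $B$ with $g$ smooth, the local potential $\frac12\pi^*(g+u)$ is continuous on $X$.

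\textbf{Failure of $C^0_{\rm loc}$ convergence.} I would argue by contradiction. The forms $T^n_*\omega/n^2$ are positive. Restrict to a smooth fiber $X_s$: the restriction of the limit $\pi^*\eta_B$ to $X_s$ vanishes, while $\int_{X_s} T^n_*\omega/n^2 = 1/n^2\to 0$, so this alone gives no contradiction. Instead I would use mixed (non-fiber) directions. The form $\pi^*\eta_B$ is a pullback, so $(\pi^*\eta_B)^2=0$. For a positive $(1,1)$-form $\theta$, $C^0$ convergence on a compact $K\subset X\setminus X_0$ would force $\int_K (T^n_*\omega)^2/n^4 \to \int_K (\pi^*\eta_B/2)^2 = 0$, which holds trivially and is useless. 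Instead I would test against the fiber direction: $\tr$ of $T^n_*\omega$ along the fibers is $(T^{-n})^*$ of the fiberwise density of $\omega$, which is bounded above and below on compact sets; dividing by $n^2$ this tends to $0$, consistent with the limit. The real obstruction is the off-diagonal part. For a positive form with fiber component $a\sim 1$, base component $b\sim n^2$ and mixed term $c$, positivity gives $|c|^2\le ab$. The determinant $ab-|c|^2 = \det(T^n_*\omega)$ is comparable to $\omega^2$, a fixed volume form, so it is $O(1)$, and $b/n^2\to \eta_B$ coefficient $>0$ somewhere. Then the mixed term $c/n^2$ is of size $\sqrt{ab}/n^2\sim 1/n$; this tends to $0$, so again no contradiction arises at the $C^0$ level. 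Hence the likely intended argument is different: on $B^\circ$ the limit current is smooth, and $C^0_{\rm loc}$ convergence on all of $X\setminus X_0$ (including near other singular fibers, where the paper's $X_0$ denotes singular fibers) would, by Tate-type uniformity, force $u$ to be smooth and of a specific form (for instance $\eta_B$ being a flat/semi-flat form computed fiberwise via the preferred potential), contradicting that $\eta_B$ has nontrivial singularities or a $\log$-type behavior at singular fibers. I expect this step to be the main obstacle. I would first try to show that $C^0$ convergence forces $\eta_B$ to be the Weil–Petersson/semi-flat form, whose potential is not bounded near singular fibers, contradicting the continuity just proved.

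\textbf{Ricci-flat case.} $T$ preserves $\Omega$ up to a unimodular constant, so $(T^n_*\omega)^2=\vol(\omega)\Omega\wedge\overline\Omega$ and $T^n_*\omega$ is Ricci flat. Combined with the previous part, this contradicts the expected $C^0_{\rm loc}$ convergence in \cite{tosatti2025}*{Question 1.5(b)}.
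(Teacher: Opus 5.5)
\textbf{Verdict: there is a genuine gap.} Your first part is what the paper does: you check $\int_{X_s}(T_*\omega-\omega)=0$, then combine \cite{Filip-Tosatti21}, Corollary 3.2.20, with \Cref{prop:continuity_of_current_val_pairing}. Your Ricci-flat remark is also fine. The gap is the second assertion, on which the counterexample to Tosatti's question entirely depends: you never prove that the convergence fails to be $C^0_{\mathrm{loc}}$ away from the singular fibers. Your size considerations (positivity, $\lvert c\rvert^2\le ab$, the determinant being $O(1)$) cannot detect the obstruction. The obstruction is an oscillation inside the coefficient you called $b$, not a growth phenomenon. Your fallback plan also targets the wrong object. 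Uniform convergence does not change the limit, which is $\frac12\pi^*\eta_B(T,[T_*\omega-\omega])$ in any case and has a continuous potential by your own first part. You give no mechanism by which uniform convergence would single out a Weil--Petersson or semi-flat form with unbounded potential. What the $C^0_{\mathrm{loc}}$ hypothesis actually constrains is the fiberwise metric $\omega|_{X_s}$.

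\textbf{The missing step is an explicit computation on the universal cover.} Over a small disc $U\subset B^\circ$, lift $\pi^{-1}(U)$ to $U\times\C$. There $T$ becomes $\tilde T(s,z)=(s,z+\tau(s))$ with $\tau$ holomorphic and non-constant (by the argument of \cite{Cantat2001}; the paper calls this function $T(s)$). Write
\[
\omega=i\,(g_{z\bar z}\,dz\wedge d\bar z+g_{s\bar s}\,ds\wedge d\bar s+g_{z\bar s}\,dz\wedge d\bar s+\overline{g_{z\bar s}}\,ds\wedge d\bar z).
\]
Since $T^n_*\omega=(\tilde T^{-n})^*\omega$ and $dz\mapsto dz-n\tau'(s)\,ds$, your coefficient $b$ is exactly
\[
n^2\lvert\tau'(s)\rvert^2 g_{z\bar z}-2n\,\mathrm{Re}\bigl(\tau'(s)\,g_{z\bar s}\bigr)+g_{s\bar s},\qquad\text{evaluated at }\bigl(s,z-n\tau(s)\bigr),
\]
while the other coefficients of $n^{-2}T^n_*\omega$ are $O(1/n)$. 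Suppose $n^{-2}b$ converged uniformly to the coefficient $h(s)$ of $\frac12\pi^*\eta_B$; by periodicity this convergence is uniform in $z\in\C$. Then the quantity $\sup_{w\in\C}\bigl\lvert\lvert\tau'(s)\rvert^2 g_{z\bar z}(s,w)-h(s)\bigr\rvert$, which does not depend on $n$, would be $o(1)$, hence zero. So $g_{z\bar z}(s,\cdot)$ is constant wherever $\tau'(s)\neq0$, hence everywhere by continuity. That means every smooth fiber $(X_s,\omega_s)$ is flat. This contradicts Lemma 10.1 of \cite{Dai-Yoshikawa2025}, which says the minimum of the Gauss curvature of $\omega_s$ tends to $-\infty$ as $s$ approaches a singular fiber.

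The paper packages the same computation differently. It writes $T^n_*\omega=\omega+n\xi+\ddc(n\varphi-S_n(T,\varphi))+\frac{n(n-1)}{2}\pi^*\eta_B(T,[\xi])$ and $\varphi=T_*\rho-\rho+c(s)$, where $\omega_{\mathrm{flat},s}-\omega_s=\ddc\rho_s$. This reduces the question to whether $n^{-2}\ddc(T^n_*\rho)\to0$, and isolates the term $n^2\lvert\tau'(s)\rvert^2(\partial_z\bar\partial_z\tilde\rho)(s,z-n\tau(s))$. In either form, the ingredients you are missing are:
\begin{enumerate}
\item the translation structure of $T$ on the fibers;
\item the non-constancy of $\tau$;
\item the curvature blow-up of $\omega_s$ near singular fibers.
\end{enumerate}
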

\begin{proof}
    By \cite{Filip-Tosatti21}*{Corollary 3.2.20}, we have 
    \[
    \lim_{n \to \infty} \frac{(T^n)_* \omega}{n^2} = \frac{1}{2} \pi^*\eta_B(T,
    [T_*\omega - \omega]).
    \]
    So $\lim_{n \to \infty} \frac{(T^n)_* \omega}{n^2}$
    has continuous potential by 
    \Cref{prop:continuity_of_current_val_pairing}.

    We now prove that the convergence of currents
    $\frac{(T^n)_* \omega}{n^2}
    \to \frac{1}{2}\pi^*\eta_B(T,
    [T_*\omega - \omega])$ is not a $C^0_{\rm loc}(X\setminus X_0)$
    convergence. We proceed by contradiction, assuming that
    $\frac{(T^n)_* \omega}{n^2} \to \frac{1}{2}\pi^*\eta_B(T,
    [T_*\omega - \omega])$ in $C^0_{\rm loc}(X\setminus X_0)$.

    We let $\xi = T_* \omega - \omega$. Then by elementary
    computation (see also \cite{Filip-Tosatti21}*{Proposition 3.2.15}), 
    we have 
    \[
    T_*^n \omega = \omega + n \xi + \ddc(n \vphi - S_n(T, \vphi))
    + \frac{n(n-1)}{2}\pi^*\eta_B(T, [\xi]),
    \]
    where $\vphi$ is the \emph{preferred potential} of 
    $\xi$ and $S_n(T, \vphi) = \sum_{k=0}^{n-1}T_*^k \vphi  $ is a Birkhoff sum.

    Note that $\frac{1}{n^2}(\omega + n \xi + \frac{n(n-1)}{2}\pi^*\eta_B(T, [\xi]))$ converges to $\frac{1}{2} \pi^*\eta_B(T,
    [\xi])$ in $C^0_{\rm loc}(X\setminus X_0)$.
    Under our assumption for contradiction, we have 
    \begin{equation}
        \label{eqn:non-convergence-1}
        \frac{1}{n^2}\ddc(n \vphi - S_n(T, \vphi)) \to_{n \to\infty} 0 \quad \text{in }
    C^0_{\rm loc}(X\setminus X_0).
    \end{equation}

    It suffices to show that \Cref{eqn:non-convergence-1}
    is impossible. 
    We interpret $\vphi$, the \emph{preferred potential} of $\xi = T_* \omega - \omega$, as follows: Since $\pi$ is an elliptic fibration, each smooth fiber $X_s$ is isomorphic to a torus $\mathbb{C}/\Lambda_s$, and $T$ acts on $X_s$ by translations.
    Let $\omega_{\rm{flat}, s}$ be the 
    normalized flat metric on $X_s$. There exists $\rho_s \in C^\infty(X_s)$
    such that 
    \[
    \omega_{\rm{flat}, s} - \omega_s = \ddc \rho_s.
    \]
    Applying $T_*$ to this equation, we obtain 
    $\omega_{\rm{flat}, s} - T_* \omega_s = \ddc T_* \rho_s$,
    since $\omega_{\rm{flat}, s}$ is invariant 
    under translation $T$. Then we have 
    \[
    \xi\mid_{X_s} = T_* \omega_s - \omega_s = -\ddc (T_* \rho_s-\rho_s).
    \]

    Thus, the \emph{preferred potential} $\vphi$ of 
    $\xi$ is equal to $T_* \rho_s-\rho_s$ up to a constant on $X_s$.
    By Ehresmann's theorem and solving equations on the region where the fibration is locally trivial, there exists $\rho \in C^\infty(X\setminus X_0)$ such that $\rho\mid_{X_s} = \rho_s$ on the smooth fibers $X_s$.
    Furthermore, $\vphi =  T_* \rho-\rho + c(s)$, 
    where $c(s)$
    is a smooth function on $B^\circ$.

    Substituting $\vphi$ by $T_* \rho-\rho + c(s)$
    in \Cref{eqn:non-convergence-1}, we have 
    \[
    \frac{1}{n^2}\ddc(n(T_* \rho-\rho) - (T^n_* \rho-\rho)) \to_{n \to\infty} 0 \quad \text{in }
    C^0_{\rm loc}(X\setminus X_0).
    \]
    By smoothness of $\rho$ on $X \setminus X_0$, the above convergence
    is equivalent to 
    \begin{equation}
        \label{eqn:non-convergence-2}
        \frac{1}{n^2}\ddc (T^n_* \rho) \to_{n \to\infty} 0 \quad \text{in }
        C^0_{\rm loc}(X\setminus X_0).
    \end{equation}
    We will show that \Cref{eqn:non-convergence-2} 
    is impossible. 
    
    On a small open set $U \subset B^\circ$
    not containing any singular points, we can trivialize
    the fibration $\pi$ as $U \times \C/\Lambda \to U$. Moreover,
    we can lift the $U \times \C/\Lambda$ to its universal covering
    $U \times \C$, and the fiber-preserving automorphism $T$
    can also be lifted, yielding the following commutative diagram:
    $$
    \begin{tikzcd}
        U_s \times \mathbb{C}_z \arrow[r, "p"] 
                            \arrow[dr, "pr_1"'] 
                            \arrow[loop left, "\tilde{T}"] 
        & \pi^{-1}(U_s) \arrow[d, "\pi"] 
                    \arrow[loop right, "T"] \\
        & U_s
    \end{tikzcd}
    $$
    We can then express the action of $\tilde{T}$ on the covering space $U_s \times \mathbb{C}_z$ as $\tilde{T}(s, z) = (s, z + T(s))$, where $T \colon U_s \to \mathbb{C}$ is a holomorphic function.
    Following the argument in \cite{Cantat2001}*{Proposition 2.2} (see also \cite{filip2022k3}*{Proposition 6.2.7}, \cite{Cantat-Dujardin-2023-B}*{Lemma 3.7}), 
    the function $T$ is non-constant.

    Lifting the convergence in \Cref{eqn:non-convergence-2} to the covering space, we find it is equivalent to
    \begin{equation}
        \label{eqn:non-convergence-3}
        \frac{1}{n^2}\ddc (\tilde{T}^n_* \tilde{\rho}) \to_{n \to\infty} 0 \quad \text{in }
        C^0_{\rm loc}(U_s \times \C_z),
    \end{equation}
    for all small open neighborhoods of points in $B^\circ$ and its 
    corresponding covering, where $\tilde{\rho}$
    is the lift of $\rho$ to the covering.

    Note that $\tilde{T}^n_* \tilde{\rho} 
    = \tilde{\rho}(s, z - n T(s))$. By the holomorphic chain rule, 
    we have 
    \begin{align*}
        \pa_s\db_s (\tilde{T}^n_* \tilde{\rho})(s,z)&= \pa_s\db_s(\tilde{\rho}(s, z - n T(s)))\\
    &=\pa_s((\db_s \tilde{\rho})(s, z - n T(s)) - n\overline{ \pa_s T}(\db_z \tilde{\rho})(s, z - nT(s))) \quad (\db_s T(s) = 0)\\
    &=(\pa_s\db_s \tilde{\rho})(s, z - n T(s)) - n (\pa_s T(s)) (\pa_z \db_s \tilde{\rho})(s, z - n T(s))\\
    &-n\overline{\pa_s T} (\pa_s\db_z \tilde{\rho})(s, z - nT(s)) + {n^2 \abs{\pa_s T}^2(\pa_z \db_z \tilde{\rho})(s, z - n T(s))}
    \end{align*}

    Then \Cref{eqn:non-convergence-3} implies that 
    \[
    \abs{\pa_s T}^2(\pa_z \db_z \tilde{\rho})(s, z - n T(s))
    \to_{n \to\infty} 0 \quad \text{in }
    C^0_{\rm loc}(U_s \times \C_z)
    \]
    for all small open sets in $B^\circ$. 
    
    Since $T$
    is non-constant and holomorphic, we have $\abs{\pa_s T}^2 > 0$
    except for at most finitely many $s \in U_s$. 
    Therefore, there exists a finite set $\Sigma \subset U_s$ such that
    \[
    (\pa_z \db_z \tilde{\rho})(s, z - n T(s))
    \to_{n \to\infty} 0 \quad \text{in }
    C^0_{\rm loc}((U_s \setminus \Sigma) \times \C_z).
    \]
    Since $\tilde{\rho}$ is periodic in $z$, it implies that 
    $\pa_z \db_z \tilde{\rho}(s, z) = 0$ for $s \in U_s \setminus \Sigma$. By smoothness of $\tilde{\rho}$, we have $\pa_z \db_z \tilde{\rho}(s, z) \equiv 0$ for $s \in S^\circ$. Consequently, 
    $\ddc \rho_s = 0$ on $X_s$ for $s \in S^\circ$.

    By definition of $\rho$, we have 
    \[
    \omega_{\rm{flat}, s} - \omega_s = \ddc \rho_s = 0.
    \]
    Thus, $\omega_{\rm{flat}, s} = \omega_s$ for $s \in S^\circ$.

    To get a contradiction, we compute the Gauss curvature of $\omega_{\rm{flat}, s}$ and $\omega$ on smooth fibers $X_s$, and study them when $X_s$ is approaching a singular fiber.
    
    By definition, the Gauss curvature 
    $K_{\omega_{\rm{flat}, s}} \equiv 0$. On the 
    other hand, by \cite{Dai-Yoshikawa2025}*{Lemma 10.1},
    the minimum of $K_{\omega_s}$ goes to $-\infty$ as 
    $s \to 0$. 
    This leads to a contradiction, concluding the proof that the convergence is not in $C^0_{\rm loc}(X\setminus X_0)$.

\end{proof}

\appendix
\section{Asymptotics of a fiber integral}
\label[appendix]{section:6-appendix}

We prove a lemma on the asymptotics of the fiber integral of the 
log-norm of the canonical section of a divisor. This result also 
appears in other works (e.g., \cite{Yoshikawa2006}*{Section 4}); 
however, we provide a proof here in our setting for convenience and 
completeness.

\begin{lemma}
    \label[lemma]{lem:log_norm_section_fib_int_general_case}
    Let $\pi\colon X \to \D$ be a degeneration of algebraic curves.
    Assume $X$ is smooth and we allow a general singular fiber $X_0$
    here. Let $C$ be one of the irreducible 
    components of 
    $X_0$ and $L = \mathcal{O}(C)$ the corresponding line bundle with 
    Hermitian metric $h$.

    Let $\sigma$ be the canonical section of $L$ which vanishes 
    exactly on $C$. Then, for any smooth $(1, 1)$-form $\alpha$ 
    on $X$, there exists a constant $c \in \Q_{>0}$ such that 
    \[
    \int_{X_s}\log\norm{\sigma}^2_{h} \alpha - \left( \int_C \alpha \right)\log  \abs{s}^2
    \]
    extends Hölder continuously through the origin.
\end{lemma}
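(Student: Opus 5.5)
The plan is to reduce, by an embedded resolution, to a situation where $X_0$ has simple normal crossing support, and then to compute the fiber integral chart by chart. The local model is $\log\norm{\sigma}^2=\tfrac{1}{a}\log\abs{s}^2+(\text{Hölder-integrable remainder})$, and the smooth pieces are controlled by Barlet's expansion (\Cref{thm:fib_integral_barlet}). The coefficient of $\log\abs{s}^2$ is then recovered as a limit of fiber integrals, which turns out to be $\int_C\alpha$.

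\textbf{Step 1 (reduction to normal crossings).} Take an embedded resolution $\mu\colon X'\to X$, a composition of point blow-ups in $X_0$, such that $(\pi\circ\mu)^{-1}(0)=\sum_j a_j D_j$ has simple normal crossing support. For $s\neq0$, $\mu$ is an isomorphism over $X_s$, so $\int_{X_s}\log\norm{\sigma}^2_h\,\alpha=\int_{X'_s}\log\norm{\mu^*\sigma}^2_{\mu^*h}\,\mu^*\alpha$. Moreover $\operatorname{div}(\mu^*\sigma)=\mu^*C=\sum_j e_jD_j$, with $e_j\ge0$, where the strict transform of $C$ has $e=1$ and the exceptional curves have the multiplicity of $C$ at the blown-up centers.

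Near any point of $X'_0$ choose coordinates with $\pi\circ\mu=\text{unit}\cdot\prod_i u_i^{a_i}$ and $\mu^*\sigma=\text{unit}\cdot\prod_i u_i^{e_i}$, where $i$ ranges over the (at most two) local components. After absorbing the unit into a coordinate when only one component passes through the point, we get on such a chart
\[
\log\norm{\mu^*\sigma}^2=\sum_i e_i\log\abs{u_i}^2+g,\qquad g \text{ smooth}.
\]

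\textbf{Step 2 (chartwise computation).} Take a finite partition of unity $\{\rho_k\}$ near $X'_0$ subordinate to such charts.

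\begin{enumerate}
\item On a chart meeting only one component $D_j$, we have $\pi=u^{a_j}$, so on the fiber $\log\abs{u}^2=\tfrac{1}{a_j}\log\abs{s}^2$ exactly. This contributes
\[
\tfrac{e_j}{a_j}\log\abs{s}^2\int_{X'_s}\rho_k\,\mu^*\alpha
\]
plus a fiber integral of a smooth form.
\item On a crossing chart with $\pi=u^av^b$, use $\log\abs{u}^2=\tfrac1a\log\abs{s}^2-\tfrac ba\log\abs{v}^2$ (with a fixed choice of which coordinate to eliminate). This leaves terms of the form $\int_{X'_s}\rho_k\log\abs{v}^2\,\mu^*\alpha$.
\end{enumerate}

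The fiber integrals of smooth forms are Hölder continuous by \Cref{thm:fib_integral_barlet}. The terms $\int\rho_k\log\abs{v}^2\,\mu^*\alpha$ are handled by parametrizing the fiber $\{u^av^b=s\}$ by $v$ over $\abs{s}^{1/b}\lesssim\abs{v}\lesssim1$, with $a$ branches for $u$. This reduces them to one-variable integrals of the shape $\int_{\abs{s}^{1/b}\le\abs{v}\le1}\log\abs{v}\,\psi(s,v)\,\abs{v}^{-2}\abs{\cdots}$. I expect these to admit expansions of the type in \Cref{thm:fib_integral_barlet}, namely powers $\abs{s}^r\log^j\abs{s}$, together with a bounded limit. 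The danger is a possible $\log^2\abs{s}$ term, which I would rule out by checking that the measure on $X'_s$ near the crossing has mass $O(\abs{s}^{\delta})$ in the region $\abs{v}\le\abs{s}^{\delta'}$, using the volume bound of \Cref{lem:uniform_tubular_volume}.

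\textbf{Step 3 (the coefficient).} After Step 2, the full coefficient of $\log\abs{s}^2$ is
\[
A(s)=\sum_k \frac{e_{j(k)}}{a_{j(k)}}\int_{X'_s}\rho_k\,\mu^*\alpha .
\]
By Barlet, $A(s)$ is Hölder in $s$, and as $s\to0$ its limit is $A(0)=\sum_j \tfrac{e_j}{a_j}\cdot a_j\int_{D_j}\mu^*\alpha=\int_{\mu^*C}\mu^*\alpha=\int_C\alpha$ by the projection formula. The remainder $(A(s)-A(0))\log\abs{s}^2=O(\abs{s}^{\gamma}\log\abs{s})$ is itself Hölder continuous, so $\int_{X_s}\log\norm{\sigma}^2_h\,\alpha-(\int_C\alpha)\log\abs{s}^2$ extends Hölder continuously through the origin.

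The main obstacle is Step 2 at the crossing points. There one must show the $\log\abs{v}$ terms are genuinely Hölder rather than merely bounded, and that the choice of which coordinate is eliminated does not shift the coefficient of $\log\abs{s}^2$. I would first try to settle both by direct computation in the model $u^av^b=s$, using polar coordinates in $v$.
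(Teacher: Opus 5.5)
\textbf{There is a genuine gap at the crossing charts in Step 2(2).} In a chart where $\pi\circ\mu=u^av^b$, you eliminate $u$ and claim that the leftover term $\int_{X'_s}\rho_k\log\abs{v}^2\,\mu^*\alpha$ is bounded, with at worst a $\log^2$ danger to exclude. That claim is false.

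Look at your own parametrization by $v$ over $\abs{s}^{1/b}\lesssim\abs{v}\lesssim 1$. The end where $\abs{v}\sim\abs{s}^{1/b}$ is where $\abs{u}\sim 1$, so it is the part of $X'_s$ lying over the other branch $D_v=\{v=0\}$. On that end, $\log\abs{v}^2=\frac1b\log\abs{s}^2-\frac ab\log\abs{u}^2$. Its $\rho_k\mu^*\alpha$-mass tends to $b\int_{D_v}\rho_k\mu^*\alpha$, because there are $b$ sheets of $v$ over each $u$. Hence
\[
\int_{X'_s}\rho_k\log\abs{v}^2\,\mu^*\alpha=\Bigl(\int_{D_v}\rho_k\,\mu^*\alpha\Bigr)\log\abs{s}^2+O(1),
\]
which is a genuine single-$\log$ term and is nonzero in general. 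Your proposed cure via \Cref{lem:uniform_tubular_volume} cannot work. The set $\{\abs{v}\le\abs{s}^{\delta'}\}\cap X'_s$ does not lie in a small ball around the node: it contains this whole end, whose mass does not tend to $0$.

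\textbf{Step 3 is wrong at crossings for the same reason.} On such a chart, $\int_{X'_s}\rho_k\mu^*\alpha\to a\int_{D_u}\rho_k\mu^*\alpha+b\int_{D_v}\rho_k\mu^*\alpha$. So the chart contributes $e_u\int_{D_u}\rho_k\mu^*\alpha+\frac{be_u}{a}\int_{D_v}\rho_k\mu^*\alpha$ to $A(0)$, not $e_u\int_{D_u}\rho_k\mu^*\alpha+e_v\int_{D_v}\rho_k\mu^*\alpha$. The two mistakes cancel exactly: the discrepancy in both places is $\bigl(e_v-\frac{be_u}{a}\bigr)\int_{D_v}\rho_k\mu^*\alpha$. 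That is why your final coefficient $\int_C\alpha$ comes out right, and why your $A(s)$ depends on which coordinate you eliminate, which is the symptom you noticed. As written, though, neither intermediate claim holds, so the argument does not prove the lemma.

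\textbf{How to fix it.} This is what the paper does in Case~3 of \Cref{lem:log_norm_section_fib_int_snc_case}: split the neck $X'_s\cap U_k$ into its two ends.
\begin{itemize}
\item On $\{\abs{u}\le\abs{v}\}$, parametrize by $v$ ($a$ branches) and eliminate $u$.
\item On $\{\abs{v}\le\abs{u}\}$, parametrize by $u$ ($b$ branches) and eliminate $v$.
\end{itemize}
The $\log\abs{s}^2$ coefficients then tend to $e_u\int_{D_u}\rho_k\mu^*\alpha+e_v\int_{D_v}\rho_k\mu^*\alpha$. Each residual integrand is the log of the coordinate that parametrizes that end, which is an integrable singularity. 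Taylor expansion in the eliminated variable plus polar coordinates gives convergence to constants, with errors of size a positive power of $\abs{s}$ times powers of $\log\abs{s}$. For instance, the $\abs{u}^2/\abs{v}^2$ term gives $O(\abs{s}^{2/(a+b)}\log\abs{s})$.

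\textbf{Comparison with the paper's route.} The paper first base-changes $s=t^d$ and resolves to reach a reduced SNC fiber. It then needs only $a=b=1$ and the symmetric split at $\abs{z_1}=\abs{z_2}=\abs{t}^{1/2}$. The price is the bookkeeping $\tilde\mu^*C=\frac dm\tilde C+\sum_k\nu_kE_k$ and $\deg(\tilde C\to C)=m$. Your no-base-change route is workable, but only once this split is added.
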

\begin{proof}
    We consider a semi-stable reduction for the family
    $\pi \colon (X, X_0) \to (\D, \{0\})$ given by the 
    following commutative diagram:
    \[
    \begin{tikzcd}
        Y \arrow[r, "\mu"] \arrow[dr, "p"'] & X \times_{\D} \D_t \arrow[r, "pr_1"] \arrow[d, "pr_2"] & X \arrow[d, "\pi"] \\
        & \D_t \arrow[r, "f_d"] & \D_s
    \end{tikzcd}
    \]
    Here, $f_d(t) = t^d = s$ is a base change of degree $d$, and $\mu$ is a resolution of singularities such that $Y$ is a smooth complex surface. The singular fiber $Y_0 = p^{-1}(X_0)$ is a reduced divisor with simple normal crossings. Let $\tilde{\mu} = pr_1 \circ \mu$.

    Let $m$ be the multiplicity of $C$ in $X_0$. Since $\tilde{\mu}^* X_0 = d Y_0$ and $Y_0$ is reduced, the strict transform $\tilde{C}$ of $C$ appears in $Y_0$ with multiplicity $1$. Thus, the pullback of $C$ as a Cartier divisor is
    \[
    \tilde{\mu}^*C = \frac{d}{m} \tilde{C} + \sum_{k} \nu_k E_k
    \]
    where $E_k$ are exceptional divisors.
    
    This induces a decomposition of the log-norm on $Y$:
    \begin{equation}
        \label{eqn:tmp_appendix_1}
        \log \norm{\tilde{\mu}^*\sigma}^2_{\tilde{\mu}^*h} = \frac{d}{m} \log \norm{\sigma_{\tilde{C}}}^2_{h_{\tilde{C}}} + \sum_{k} \nu_k \log \norm{\sigma_{E_k}}^2_{h_{E_k}} + u
    \end{equation}
    where $\sigma_D$ denotes the canonical section of a component $D \subset Y_0$, and $u$ is a smooth function on $Y$.

    For $t \neq 0$, the map $\tilde{\mu}_t \colon Y_t \xrightarrow{\sim} X_s$ is an isomorphism. Pulling the integral back to the semi-stable model $Y_t$, we obtain:
    \[
    \int_{X_s} \log\norm{\sigma}^2_h \alpha = \int_{Y_t} \log\norm{\tilde{\mu}^*\sigma}^2_{\tilde{\mu}^*h} \tilde{\mu}^*\alpha
    \]
    Substituting our decomposition \Cref{eqn:tmp_appendix_1}, this splits into:
    \begin{equation}
        \label{eqn:tmp_appendix_2}
        \frac{d}{m} \int_{Y_t} \log\norm{\sigma_{\tilde{C}}}^2_{h_{\tilde{C}}} \tilde{\mu}^*\alpha + \sum_{k} \nu_k \int_{Y_t} \log\norm{\sigma_{E_k}}^2_{h_{E_k}} \tilde{\mu}^*\alpha + \int_{Y_t} u \tilde{\mu}^*\alpha = \int_{X_s} \log\norm{\sigma}^2_h \alpha
    \end{equation}

    Since $Y_0$ has strictly simple normal crossings, we apply
    \Cref{lem:log_norm_section_fib_int_snc_case} to the
    first two terms of \Cref{eqn:tmp_appendix_2}, which yield:
    \begin{align*}
        \int_{Y_t} \log\norm{\sigma_{\tilde{C}}}^2_{h_{\tilde{C}}} \tilde{\mu}^*\alpha &=  \left(\int_{\tilde{C}} \tilde{\mu}^*\alpha \right) \log\abs{t}^2 + R_1(t)\\
        &= m\left(\int_{{C}} \alpha \right) \log\abs{t}^2 + R_1(t)\\
        \int_{Y_t} \log\norm{\sigma_{E_k}}^2_{h_{E_k}} \tilde{\mu}^*\alpha&
        = \left(\int_{E_k} \tilde{\mu}^*\alpha \right) \log\abs{t}^2 + R_{2,k}(t),\\
        &= \left(\int_{\tilde{\mu}_*[E_k]} \alpha \right) \log\abs{t}^2 + R_{2,k}(t)\\ 
        &= R_{2,k}(t),
    \end{align*}
    where $R_1(t), R_{2,k}(t)$ are Hölder continuous around $0$.
    
    Since $u$ is smooth, by Barlet's result (\Cref{thm:fib_integral_barlet}), the third term 
    (of \Cref{eqn:tmp_appendix_2}) $\int_{Y_t} u \tilde{\mu}^*\alpha \eqqcolon R_3(t)$, is Hölder continuous around $0$. Substituting the previous computations into \Cref{eqn:tmp_appendix_2}, we have 
    \[
    \int_{X_s} \log\norm{\sigma}^2_h \alpha = 
    d\left(\int_{{C}} \alpha \right) \log\abs{t}^2 + \underbrace{\frac{d}{m} R_1(t)
    + \sum_{k}R_{2,k}(t) + R_3(t)}_{\text{Hölder continuous}},
    \]
    
    Finally, we translate back to the original base parameter $s$. Since $s = t^d$, we have $\log\abs{t}^2 = \frac{1}{d} \log\abs{s}^2$.  Therefore,
    \[
    \int_{X_s} \log\norm{\sigma}^2_h \alpha = 
    \left( \int_C \alpha \right) \log\abs{s}^{2} + R(s),
    \]
    and $R(s)$ is Hölder continuous.

    We conclude that:
    \[
    \int_{X_s}\log\norm{\sigma}^2_{h} \alpha - \left( \int_C \alpha \right)\log  \abs{s}^2
    \]
    extends Hölder continuously through the origin.
\end{proof}
\vone
\begin{lemma}
    \label[lemma]{lem:log_norm_section_fib_int_snc_case}
    Let $\pi \colon \mathcal{Y} \to \D$ be a proper 
    holomorphic map from a smooth complex surface to 
    the unit disk. Assume the singular fiber 
    $Y_0 = \pi^{-1}(0)$ is a reduced divisor with 
    strictly simple normal crossings. Let $D$ be an 
    irreducible component of $Y_0$, $\sigma_D$ be its 
    canonical section, and $h$ be a smooth Hermitian 
    metric on $\mathcal{O}(D)$. For any smooth 
    $(1,1)$-form $\omega$ on $\mathcal{Y}$, 
    the function 
    \[
    t \mapsto \int_{Y_t} \log\norm{\sigma_D}_h^2 \, \omega - \left( \int_D \omega \right) \log\abs{t}^2
    \]
    extends Hölder continuously to $t=0$.
\end{lemma}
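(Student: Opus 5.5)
The plan is to reduce the fiber integral to a local computation near $D$ and then invoke Barlet's expansion (\Cref{thm:fib_integral_barlet}). First I would write $\log\norm{\sigma_D}^2_h = \log\abs{f_D}^2 + \psi$ locally, where $f_D$ is a local defining function of $D$ and $\psi$ is smooth. Globalizing, $\log\norm{\sigma_D}^2_h$ is smooth away from $D$. Take a smooth cutoff $\chi$ equal to $1$ near $D$ and supported in a small neighborhood $U$ of $D$. On $X\setminus D$ the function $(1-\chi)\log\norm{\sigma_D}^2_h$ is smooth, so its fiber integral against $\omega$ is Hölder continuous by Barlet. That leaves the term $\int_{Y_t}\chi\log\norm{\sigma_D}^2_h\,\omega$.

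Near $D$ I would use the snc structure. Away from the double points $D\cap D'$, $\pi = u\cdot f_D$ with $u$ a unit, so $\log\abs{f_D}^2 = \log\abs{t}^2 - \log\abs{u}^2$ on $Y_t$. Near a double point $p\in D\cap D'$ with coordinates $(z,w)$ in which $\pi = zw$ and $D=\{z=0\}$, we have $\log\abs{z}^2 = \log\abs{t}^2 - \log\abs{w}^2$ on $Y_t$. Using a partition of unity subordinate to these charts, I would write
\[
\chi\log\norm{\sigma_D}^2_h = \chi\,\rho_D\log\abs{t}^2 + g,
\]
where $\rho_D$ is smooth, equal to $1$ near $D$ away from double points, and, near each double point, equals $1$ on the $D$-side and $0$ on the $D'$-side. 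The remainder $g$ is a sum of smooth terms plus terms of the form $-\theta\log\abs{w}^2$ near double points. For this I would choose the splitting $\log\abs{z}^2 = \theta\log\abs{t}^2 + (1-\theta)\log\abs{z}^2 - \theta\log\abs{w}^2$ with $\theta(z,w)$ a cutoff equal to $1$ where $\abs{w}\ge\abs{z}$-ish, so that each log term is singular only where its argument is not comparable to $\abs{t}^{1/2}$.

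The coefficient of $\log\abs{t}^2$ is then $\int_{Y_t}\chi\rho_D\,\omega$, a fiber integral of a smooth form. By Barlet it equals $\int_{Y_0}\chi\rho_D\,\omega + O(\abs{t}^a)$, and the limit is $\int_D\omega$ up to contributions concentrated at the double points. Since $O(\abs{t}^a)\log\abs{t}^2$ is still Hölder continuous, this coefficient error is harmless.

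The main obstacle is making the double-point analysis uniform, that is, showing that the fiber integrals of the log-singular remainder terms $\theta\log\abs{w}^2\,\omega$ over the annuli $\{zw=t\}$ extend Hölder continuously. I would attack this by an explicit computation on $\{zw=t,\ \abs{z},\abs{w}<1\}$, parametrized by $z$ with $\abs{t}<\abs{z}<1$. There $\omega$ restricts to a form bounded by $C(1+\abs{t}^2/\abs{z}^4)\,\frac{i}{2}dz\wedge d\bar z$, and $\log\abs{w}^2 = \log\abs{t}^2 - \log\abs{z}^2$. Integrating in polar coordinates, the pieces are either $\abs{t}^{\epsilon}\log$-small or, alternatively, reduce via Barlet's expansion applied to the smooth forms $\log\abs{z}^2$ on the branch $\{w\neq 0\}$. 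The potential $\log\abs{t}^2$ coefficients coming from the $D'$-side must cancel exactly, which I would check by Stokes: $\int_{Y_t}\rho_D\,\omega \to \int_D\omega$. If the direct cancellation is messy, the fallback is to verify that each remainder is of the form $\int_{Y_t}(\text{smooth}) + \int_{Y_t}\log\abs{w}^2\cdot(\text{compactly supported smooth})$, and to handle the latter by the explicit annulus integral, which gives $A + B\,\abs{t}^2\log\abs{t} + \dots$, Hölder continuous.
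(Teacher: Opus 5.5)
There is a genuine gap, and it sits exactly at the nodes $p\in D\cap D'$ of $Y_0$, which is where all the content of the lemma lies.

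\textbf{The cutoffs you need do not exist as smooth functions.} Your $\rho_D$ would have to be smooth, equal to $1$ on $D$ and $0$ on $D'$ near $p$. Even a continuous such function would have to take both values at $p$. Your cutoff $\theta$ has the same defect: it is $1$ where $\abs{w}\ge\abs{z}$ and vanishes where $\abs{w}\ll\abs{z}$, so it takes both values $0$ and $1$ in every neighborhood of $(0,0)$. Hence $\int_{Y_t}\chi\rho_D\,\omega$ is not the fiber integral of a smooth form on $\mathcal{Y}$, and Barlet's theorem cannot be applied to it.

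If you instead take $\rho_D$ genuinely smooth, then in general $\int_{Y_0}\chi\rho_D\,\omega\neq\int_D\omega$, since smooth integrands produce no ``contributions concentrated at the double points''. The discrepancy then reappears as a $\log\abs{t}^2$-divergent piece of the remainder $g$.

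\textbf{The fallback does not help.} For a compactly supported smooth $\eta$ near the node,
\[
\int_{Y_t}\log\abs{w}^2\,\eta=\Bigl(\int_{D'}\eta\Bigr)\log\abs{t}^2+O(1).
\]
To see it for $\eta=\psi\,\tfrac{i}{2}dz\wedge d\bar z$, parametrize by $z$ and use $\log\abs{w}^2=\log\abs{t}^2-\log\abs{z}^2$. This is the local version of the lemma for $D'$, not an expression $A+B\abs{t}^2\log\abs{t}$.

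\textbf{A rate is needed, not just convergence.} Knowing $\int_{Y_t}\rho_D\,\omega\to\int_D\omega$ (by Stokes or otherwise) is not enough. You need a rate, say $O(\abs{t}\log\abs{t})$, so that the error stays harmless after multiplication by $\log\abs{t}^2$.

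\textbf{What is missing is the explicit node computation, which is how the paper proves the lemma.} On $\{zw=t\}\cap\D^2$, split the annulus at $\abs{z}=\abs{t}^{1/2}$. Your $\theta$, read on $Y_t$ as a function of $\abs{w}^2/\abs{t}$, is a smoothed version of this split and would also work, but then everything must be computed by hand rather than via Barlet.
\begin{itemize}
\item On the half $\abs{z}\ge\abs{t}^{1/2}$, parametrized by $z$, the function $\log\abs{z}^2$ is integrable on $D'$.
\item On the other half, parametrized by $w$, write $\log\abs{z}^2=\log\abs{t}^2-\log\abs{w}^2$.
\item In each half, replace the coefficient of $\eta|_{Y_t}$ by its value at $(z,0)$, resp.\ $(0,w)$. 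The errors are bounded by $C(\abs{t}/r+\abs{t}/r^2+\abs{t}^2/r^4)$.
\item Integrate in polar coordinates over $r\ge\abs{t}^{1/2}$, and account for the missing disk $r<\abs{t}^{1/2}$.
\end{itemize}
This gives a constant plus $O(\abs{t}\log^2\abs{t})$, with $\log\abs{t}^2$-coefficient $\int_{D\cap U}\eta+O(\abs{t}\log\abs{t})$, which is exactly what is needed.

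Your treatment away from the nodes, via $\pi=u f_D$, is fine and matches the paper's.
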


\begin{proof}
    Fix a finite open cover $\{U_j\}$ of $Y_0$ in 
    $\mathcal{Y}$ by coordinate polydisks 
    $U_j \cong \D^2$ such that on each $U_j$, 
    $\pi$ is given either by $t = z_1$ 
    (smooth points of $Y_0$) or $t = z_1 z_2$ (nodes of $Y_0$). 
    Let $\{\rho_j\}$ be a smooth partition of unity subordinate to $\{U_j\}$. 

    On each $U_j$, we decompose the metric as 
    $\log\norm{\sigma_D}_h^2 = \log\abs{f_D}^2 + u_j$, 
    where $f_D \in \mathcal{O}(U_j)$ defines $D \cap U_j$ and $u_j$ is a smooth function. 
    By Barlet's theorem on fiber integral (\Cref{thm:fib_integral_barlet}), 
    $t \mapsto \int_{Y_t \cap U_j} \rho_j u_j \omega$ 
    extends Hölder continuously to $t=0$. We analyze 
    the singular integral $I_j(t) = \int_{Y_t \cap U_j} \rho_j \log\abs{f_D}^2 \omega$ case by case:

    \textbf{Case 1: $U_j \cap D = \emptyset$.} 
    Here, $f_D$ is a non-vanishing holomorphic 
    function on $U_j$. Thus $\log\abs{f_D}^2$ is 
    smooth, and $I_j(t)$ is smooth around
    $t = 0$. In this case,
    \[
    I_j(t) = \left( \int_{D \cap U_j} \rho_j \omega \right) \log\abs{t}^2 + I_j(t).
    \]

    \textbf{Case 2: $U_j$ intersects $D$ outside the nodes.} 
    Here, $\pi = z_1$ and $f_D = z_1$. 
    Thus $I_j(t) = \log\abs{t}^2 \int_{Y_t \cap U_j} \rho_j \omega$. 
    Since $\pi$ on $U_j$
    is a submersion, by Ehresmann's theorem and
    Taylor expansion, we have 
    $\int_{Y_t  \cap U_j} \rho_j \omega = \int_{\{z_1 = t\}  \cap U_j} \rho_j \omega = \int_{\{z_1 = 0\}  \cap U_j}\rho_j \omega + O(\abs{t}) =   \int_{D \cap U_j} \rho_j \omega + O(\abs{t})$. 
    Multiplying by $\log\abs{t}^2$, we obtain
    \[
    I_j(t) = \left( \int_{D \cap U_j} \rho_j \omega \right) \log\abs{t}^2 + R_j(t),
    \]
    where $R_j(t) = O(\abs{t}\log\abs{t})$ is Hölder 
    continuous around $t=0$.

\textbf{Case 3: $U_j$ contains a node $D \cap D'$.}
    Here, the map is given by $\pi = z_1 z_2$, with $D$ defined by $z_1 = 0$ and $D'$ defined by $z_2 = 0$. Thus $f_D = z_1$. The fiber $Y_t \cap U_j$ is the restricted annulus $\{ (z_1, z_2) \in U_j \mid z_1 z_2 = t \}$.

    Let $\eta = \rho_j \omega$. Since $\eta$ is a smooth form with compact support in $U_j \cong \D^2$, we can write it globally on the chart as $\eta = \frac{i}{2} \sum_{k,l=1}^2 \eta_{k\bar{l}} dz_k \wedge d\bar{z}_l$. We want to evaluate the asymptotic behavior of
    \[
    I_j(t) = \int_{Y_t \cap U_j} \log\abs{z_1}^2 \, \eta
    \]

    We partition the domain of integration on the annulus $Y_t \cap U_j$ into two regions: $V_1(t) = \{ \abs{t}^{1/2} \le \abs{z_1} < 1 \}$ and $V_2(t) = \{ \abs{t}^{1/2} < \abs{z_2} < 1 \}$. 

    On $V_1(t)$, we use $z_1$ as the coordinate. Substituting $z_2 = t/z_1$ and $dz_2 = -(t/z_1^2) dz_1$, the restriction of $\eta$ is:
    \[
    \eta|_{Y_t} = \left( \eta_{1\bar{1}} - \eta_{1\bar{2}} \frac{\bar{t}}{\bar{z}_1^2} - \eta_{2\bar{1}} \frac{t}{z_1^2} + \eta_{2\bar{2}} \frac{\abs{t}^2}{\abs{z_1}^4} \right) \frac{i}{2} dz_1 \wedge d\bar{z}_1
    \]
    where the smooth coefficients $\eta_{k\bar{l}}$ are evaluated at $(z_1, t/z_1)$. By the smoothness of $\eta$, we can Taylor expand the leading coefficient as $\eta_{1\bar{1}}(z_1, t/z_1) = \eta_{1\bar{1}}(z_1, 0) + O(\abs{t}/\abs{z_1})$.

    Integrating the cross-term and higher-order errors against $\log\abs{z_1}^2$ over $V_1(t)$ yields bounded remainders. For example, passing to polar coordinates $r = \abs{z_1}$:
    \[
    \int_{\abs{t}^{1/2}}^1 \abs{\log r} \cdot O\left( \frac{\abs{t}}{r^2} \right) r \, dr = O\left( \abs{t} \int_{\abs{t}^{1/2}}^1 \frac{\abs{\log r}}{r} dr \right) = O(\abs{t} \log^2\abs{t})
    \]
    Similar bounds hold for the $O(\abs{t}^2/r^4)$ and $O(\abs{t}/r)$ terms. Thus, the integral over $V_1(t)$ is tightly dominated by the leading term:
    \[
    \int_{V_1(t)} \log\abs{z_1}^2 \eta|_{Y_t} = \int_{\abs{t}^{1/2} \le \abs{z_1} < 1} \log\abs{z_1}^2 \eta_{1\bar{1}}(z_1, 0) \frac{i}{2} dz_1 \wedge d\bar{z}_1 + O(\abs{t} \log^2\abs{t})
    \]

    Extending the domain of this integral to the full local component $D' \cap U_j$ (where $z_2 = 0$) introduces an error bounded by the integral over the missing inner disk $\{ \abs{z_1} < \abs{t}^{1/2} \}$. This hole has a volume integral of $\int_0^{\abs{t}^{1/2}} \abs{\log r} r \, dr = O(\abs{t} \log\abs{t})$. Hence:
    \begin{align*}
        I_{j,1}(t) &= \underbrace{\int_{D' \cap U_j} \log\abs{z_1}^2 \eta|_{D'}}_{\mathrm{constant}} + O(\abs{t} \log^2\abs{t}).
    \end{align*}

    On $V_2(t)$, we switch to the coordinate $z_2$. Here, the log-norm splits exactly as $\log\abs{z_1}^2 = \log\abs{t}^2 - \log\abs{z_2}^2$.

    Similarly, the restriction of $\eta$ in the $z_2$ coordinate is:
    \[
    \eta|_{Y_t} = \left( \eta_{2\bar{2}} - \eta_{1\bar{2}} \frac{t}{z_2^2} - \eta_{2\bar{1}} \frac{\bar{t}}{\bar{z}_2^2} + \eta_{1\bar{1}} \frac{\abs{t}^2}{\abs{z_2}^4} \right) \frac{i}{2} dz_2 \wedge d\bar{z}_2
    \]
    with coefficients evaluated at $(t/z_2, z_2)$.

    The term involving $-\log\abs{z_2}^2$ behaves exactly like the $V_1(t)$ integral, yielding a constant relative to $t$ minus a remainder:
    \[
    -\int_{V_2(t)} \log\abs{z_2}^2 \eta|_{Y_t} = \underbrace{-\int_{D \cap U_j} \log\abs{z_2}^2 \eta|_{D}}_{\mathrm{constant}} + O(\abs{t} \log^2\abs{t})
    \]

    For the $\log\abs{t}^2$ term, we must integrate the volume-like form $\eta|_{Y_t}$ over $V_2(t)$. Applying the exact same Taylor expansion and error bounds, the dominant term is $\eta_{2\bar{2}}(0, z_2)$. We have:
    \[
    \int_{V_2(t)} \eta|_{Y_t} = \int_{\abs{t}^{1/2} \le \abs{z_2} < 1} \eta_{2\bar{2}}(0, z_2) \frac{i}{2} dz_2 \wedge d\bar{z}_2 + \int_{\abs{t}^{1/2}}^1 O\left( \frac{\abs{t}}{r^2} \right) r \, dr
    \]
    The error integral evaluates to $O(\abs{t} \abs{\log\abs{t}})$. The difference between the main integral and the full integral over $D \cap U_j$ is the missing disk $\{ \abs{z_2} < \abs{t}^{1/2} \}$, which adds a simple $O(\abs{t})$ error. Thus:
    \[
    \int_{V_2(t)} \eta|_{Y_t} = \int_{D \cap U_j} \eta|_{D} + O(\abs{t} \log\abs{t})
    \]
    Multiplying this volume result by $\log\abs{t}^2$ gives $\left( \int_{D \cap U_j} \rho_j \omega \right) \log\abs{t}^2 + O(\abs{t} \log^2\abs{t})$.

    Combining both regions for the entire chart $U_j$, we have
    \[
    I_j(t) = \left( \int_{D \cap U_j} \rho_j \omega \right) \log\abs{t}^2 + \tilde{R}_j(t),
    \]
    where $\tilde{R}_j(t) = \mathrm{constant} + O(\abs{t} \log^2\abs{t})$ is Hölder continuous for any exponent $0 < \alpha < 1$ around $t = 0$.
\end{proof}

\bibliographystyle{amsalpha}
\bibliography{refs}

@article{Tosatti2010,
  title = {Adiabatic limits of {Ricci-flat K\"{a}hler} metrics},
  volume = {84},
  ISSN = {0022-040X},
  url = {http://dx.doi.org/10.4310/jdg/1274707320},
  DOI = {10.4310/jdg/1274707320},
  number = {2},
  journal = {Journal of Differential Geometry},
  publisher = {International Press of Boston},
  author = {Tosatti,  Valentino},
  year = {2010},
  month = feb 
}

@book{Duistermaat2010,
  author    = {Johannes Jisse Duistermaat},
  title     = {Discrete Integrable Systems: {QRT Maps} and {Elliptic Surfaces}},
  series    = {Springer Monographs in Mathematics},
  publisher = {Springer},
  address   = {New York, NY},
  year      = {2010},
  doi       = {10.1007/978-0-387-72923-7},
  isbn      = {978-0-387-72923-7}
}

@article{Cantat-Dujardin-2023-B,
  title = {Invariant {Measures} for {Large} {Automorphism} {Groups} of {Projective} {Surfaces}},
  volume = {30},
  ISSN = {1531-586X},
  url = {http://dx.doi.org/10.1007/s00031-022-09782-0},
  DOI = {10.1007/s00031-022-09782-0},
  number = {1},
  journal = {Transformation Groups},
  publisher = {Springer Science and Business Media LLC},
  author = {Cantat,  Serge and Dujardin,  Romain},
  year = {2023},
  month = jan,
  pages = {75--145}
}

@article{Silverman1,
  author  = {Silverman, Joseph H.},
  title   = {Variation of the canonical height on elliptic surfaces. {I}. {T}hree examples},
  journal = {J. Reine Angew. Math.},
  volume  = {426},
  year    = {1992},
  pages   = {151--178},
  doi     = {10.1515/crll.1992.426.151},
  mrnumber= {1153245}
}

@article{Silverman2,
  author  = {Silverman, Joseph H.},
  title   = {Variation of the canonical height on elliptic surfaces. {II}. {L}ocal analyticity properties},
  journal = {J. Number Theory},
  volume  = {48},
  number  = {3},
  year    = {1994},
  pages   = {291--329},
  doi     = {10.1006/jnth.1994.1069},
  mrnumber= {1293863}
}

@article{Silverman3,
  author  = {Silverman, Joseph H.},
  title   = {Variation of the canonical height on elliptic surfaces. {III}. {G}lobal boundedness properties},
  journal = {J. Number Theory},
  volume  = {48},
  number  = {3},
  year    = {1994},
  pages   = {330--352},
  doi     = {10.1006/jnth.1994.1070},
  mrnumber= {1293864}
}

@article{Cantat-Dujardin-2023,
  title = {Random dynamics on real and complex projective surfaces},
  volume = {802},
  pages = {1--76},
  ISSN = {1435-5345},
  url = {https://doi.org/10.1515/crelle-2023-0038},
  DOI = {10.1515/crelle-2023-0038},
  journal = {Journal f\"{u}r die reine und angewandte Mathematik (Crelles Journal)},
  publisher = {Walter de Gruyter GmbH},
  author = {Cantat, Serge and Dujardin, Romain},
  year = {2023},
  month = aug
}

@article{DeMarcoMavraki2020,
  author  = {DeMarco, Laura and Mavraki, Niki Myrto},
  title   = {Variation of canonical height and equidistribution},
  journal = {American Journal of Mathematics},
  volume  = {142},
  number  = {2},
  pages   = {443--473},
  year    = {2020},
  doi     = {10.1353/ajm.2020.0012},
  publisher = {Johns Hopkins University Press}
}

@article{EFG2018,
  author    = {Eriksson, Dennis and Freixas i Montplet, Gerard and Mourougane, Christophe},
  title     = {Singularities of metrics on {H}odge bundles and their topological invariants},
  journal   = {Algebraic Geometry},
  volume    = {5},
  number    = {6},
  pages     = {742--775},
  year      = {2018},
  doi       = {10.14231/AG-2018-021},
  publisher = {Foundation Compositio Mathematica}
}

@article{Takayama2020,
  title = {Asymptotic expansions of fiber integrals over higher-dimensional bases},
  volume = {2021},
  ISSN = {1435-5345},
  url = {http://dx.doi.org/10.1515/crelle-2020-0027},
  DOI = {10.1515/crelle-2020-0027},
  number = {773},
  journal = {Journal f\"{u}r die reine und angewandte Mathematik (Crelles Journal)},
  publisher = {Walter de Gruyter GmbH},
  author = {Takayama,  Shigeharu},
  year = {2020},
  month = sep,
  pages = {67--128}
}

@article{Tate1983,
  title = {Variation of the {Canonical Height of a Point Depending on a Parameter}},
  volume = {105},
  ISSN = {0002-9327},
  url = {http://dx.doi.org/10.2307/2374389},
  DOI = {10.2307/2374389},
  number = {1},
  journal = {American Journal of Mathematics},
  publisher = {JSTOR},
  author = {Tate,  J.},
  year = {1983},
  month = feb,
  pages = {287}
}

@incollection{filip2022k3,
  author    = {Filip, Simion},
  title     = {An introduction to {K3} surfaces and their dynamics},
  booktitle = {Teichm{\"u}ller theory and dynamics},
  series    = {Panoramas et Synth{\`e}ses},
  year      = {2022},
  pages     = {1--43},
  publisher = {Soci{\'e}t{\'e} Math{\'e}matique de France},
  address   = {Paris},
  note      = {Panoramas et Synthèses. \url{https://math.uchicago.edu/~sfilip/public_files/lectures_k3_dynamics.pdf}}
}

@book{BHPVdV2004,
  author    = {Barth, Wolf P. and Hulek, Klaus and Peters, Chris A. M. and Van de Ven, Antonius},
  title     = {Compact Complex Surfaces},
  series    = {Ergebnisse der Mathematik und ihrer Grenzgebiete. 3. Folge},
  volume    = {4},
  edition   = {2nd},
  publisher = {Springer-Verlag},
  address   = {Berlin},
  year      = {2004},
  doi       = {10.1007/978-3-642-57739-0},
  isbn      = {978-3-540-00832-3}
}

@article{Cantat2001,
  title = {Sur la dynamique du groupe d’automorphismes des surfaces {K3}},
  volume = {6},
  ISSN = {1531-586X},
  url = {http://dx.doi.org/10.1007/BF01263089},
  DOI = {10.1007/bf01263089},
  number = {3},
  journal = {Transformation Groups},
  publisher = {Springer Science and Business Media LLC},
  author = {Cantat,  S.},
  year = {2001},
  month = sep,
  pages = {201--214}
}

@misc{tosatti2025,
      title={Ricci-flat metrics on {Calabi-Yau} manifolds}, 
      author={Valentino Tosatti},
      year={2025},
      eprint={2509.25607},
      archivePrefix={arXiv},
      primaryClass={math.DG},
      url={https://arxiv.org/abs/2509.25607}, 
      note = {Preprint. \href{https://arxiv.org/abs/2509.25607v2}{arXiv:2509.25607v2}}
}

@book{Petersen2016,
  title = {Riemannian {Geometry}},
  ISBN = {9783319266541},
  ISSN = {2197-5612},
  url = {http://dx.doi.org/10.1007/978-3-319-26654-1},
  DOI = {10.1007/978-3-319-26654-1},
  journal = {Graduate Texts in Mathematics},
  publisher = {Springer International Publishing},
  author = {Petersen,  Peter},
  year = {2016}
}

@article{Yoshikawa2006,
  title = {On the singularity of {Quillen} metrics},
  volume = {337},
  ISSN = {1432-1807},
  url = {http://dx.doi.org/10.1007/s00208-006-0027-5},
  DOI = {10.1007/s00208-006-0027-5},
  number = {1},
  journal = {Mathematische Annalen},
  publisher = {Springer Science and Business Media LLC},
  author = {Yoshikawa,  Ken-Ichi},
  year = {2006},
  month = jul,
  pages = {61--89}
}

@article{DH2015,
  title = {Asymptotics of the {Néron} height pairing},
  volume = {22},
  ISSN = {1945-001X},
  url = {http://dx.doi.org/10.4310/MRL.2015.v22.n5.a5},
  DOI = {10.4310/mrl.2015.v22.n5.a5},
  number = {5},
  journal = {Mathematical Research Letters},
  publisher = {International Press of Boston},
  author = {Holmes,  David and de Jong,  Robin},
  year = {2015},
  pages = {1337--1371}
}

@misc{GPSS-Sobolev,
      title={Sobolev inequalities on {K\"ahler} spaces}, 
      author={Bin Guo and Duong H. Phong and Jian Song and Jacob Sturm},
      year={2023},
      eprint={2311.00221},
      archivePrefix={arXiv},
      primaryClass={math.DG},
      url={https://arxiv.org/abs/2311.00221}, 
      note = {Preprint. \href{https://arxiv.org/abs/2311.00221v1}{arXiv:2311.00221v1}}
}

@article{Barlet,
  title = {Développement asymptotique des fonctions obtenues par intégration sur les fibres},
  volume = {68},
  ISSN = {1432-1297},
  url = {http://dx.doi.org/10.1007/BF01394271},
  DOI = {10.1007/bf01394271},
  number = {1},
  journal = {Inventiones Mathematicae},
  publisher = {Springer Science and Business Media LLC},
  author = {Barlet,  Daniel},
  year = {1982},
  month = feb,
  pages = {129--174}
}

@article{Bost1990,
  title = {Green’s currents and height pairing on complex tori},
  volume = {61},
  ISSN = {0012-7094},
  url = {http://dx.doi.org/10.1215/S0012-7094-90-06134-4},
  DOI = {10.1215/s0012-7094-90-06134-4},
  number = {3},
  journal = {Duke Mathematical Journal},
  publisher = {Duke University Press},
  author = {Bost,  Jean-Benoît},
  year = {1990},
  month = dec 
}

@book{GT-PDE,
  title = {Elliptic {Partial Differential Equations of Second Order}},
  ISBN = {9783642617980},
  ISSN = {2512-5257},
  url = {http://dx.doi.org/10.1007/978-3-642-61798-0},
  DOI = {10.1007/978-3-642-61798-0},
  journal = {Classics in Mathematics},
  publisher = {Springer Berlin Heidelberg},
  author = {Gilbarg,  David and Trudinger,  Neil S.},
  year = {2001}
}

@incollection{demailly1996,
  author    = {Demailly, Jean-Pierre},
  title     = {{$L^2$} {Hodge} theory and vanishing theorems},
  booktitle = {Introduction to {Hodge} Theory},
  series    = {SMF/AMS Texts and Monographs},
  volume    = {8},
  publisher = {American Mathematical Society},
  address   = {Providence, RI},
  year      = {2002},
  pages     = {1--95},
  note      = {\url{https://www-fourier.univ-grenoble-alpes.fr/~demailly/manuscripts/hodge-ams.pdf}}
}

@misc{Dai-Yoshikawa2025,
  author = {Dai, Xianzhe and Yoshikawa, Ken-Ichi},
  title = {{Degeneration of Riemann surfaces and small eigenvalues of the Laplacian}},
  year = {2025},
  note = {Preprint. \href{https://arxiv.org/abs/2509.06151v2}{arXiv:2509.06151v2}}
}

@article{Yoshikawa1997,
  title = {Degeneration of algebraic manifolds and the spectrum of {Laplacian}},
  volume = {146},
  ISSN = {2152-6842},
  url = {http://dx.doi.org/10.1017/S002776300000622X},
  DOI = {10.1017/s002776300000622x},
  journal = {Nagoya Mathematical Journal},
  publisher = {Cambridge University Press (CUP)},
  author = {Yoshikawa,  Ken-Ichi},
  year = {1997},
  month = jun,
  pages = {83--129}
}

@book{Huybrechts_K3_Book,
  title = {Lectures on {K3 Surfaces}},
  ISBN = {9781316594193},
  url = {http://dx.doi.org/10.1017/CBO9781316594193},
  DOI = {10.1017/cbo9781316594193},
  publisher = {Cambridge University Press},
  author = {Huybrechts,  Daniel},
  year = {2016},
  month = sep 
}

@article{Cheng_Li1981,
  title = {Heat kernel estimates and lower bound of eigenvalues},
  volume = {56},
  ISSN = {1420-8946},
  url = {http://dx.doi.org/10.1007/BF02566216},
  DOI = {10.1007/bf02566216},
  number = {1},
  journal = {Commentarii Mathematici Helvetici},
  publisher = {European Mathematical Society - EMS - Publishing House GmbH},
  author = {Cheng,  Shiu-Yuen and Li,  Peter},
  year = {1981},
  month = dec,
  pages = {327--338}
}

@book{Siu_Lectures_On_KE,
  title = {Lectures on {Hermitian-Einstein} Metrics for Stable Bundles and {K\"{a}hler-Einstein} Metrics},
  ISBN = {9783034874861},
  url = {http://dx.doi.org/10.1007/978-3-0348-7486-1},
  DOI = {10.1007/978-3-0348-7486-1},
  publisher = {Birkh\"{a}user Basel},
  author = {Siu,  Yum-Tong},
  year = {1987}
}

@article{Li_Tian_Heat_Kernel_1995,
  title = {On the heat kernel of the {Bergmann} metric on algebraic varieties},
  volume = {8},
  ISSN = {1088-6834},
  url = {http://dx.doi.org/10.1090/S0894-0347-1995-1320155-0},
  DOI = {10.1090/s0894-0347-1995-1320155-0},
  number = {4},
  journal = {Journal of the American Mathematical Society},
  publisher = {American Mathematical Society (AMS)},
  author = {Li,  Peter and Tian,  Gang},
  year = {1995},
  pages = {857--877}
}

@article{Filip-Tosatti21,
  title = {Canonical currents and heights for {K3} surfaces},
  volume = {11},
  ISSN = {2168-0949},
  url = {http://dx.doi.org/10.4310/CJM.2023.v11.n3.a2},
  DOI = {10.4310/cjm.2023.v11.n3.a2},
  number = {3},
  journal = {Cambridge Journal of Mathematics},
  publisher = {International Press of Boston},
  author = {Filip,  Simion and Tosatti,  Valentino},
  year = {2023},
  pages = {699--794}
}

\bigskip
\footnotesize

\textsc{Courant Institute of Mathematical Sciences, 
New York University, 251
Mercer St, New York, NY 10012
}
\par\nopagebreak
\textit{Email address}, \texttt{junyu.cao@nyu.edu
}\par\nopagebreak
\textit{Homepage}, \url{https://junyucao1024.github.io}.

\end{document}